\theoremstyle{plain}
\newtheorem{lem}{Lemma}[section]
\newtheorem{cor}[lem]{Corollary}
\newtheorem{prop}[lem]{Proposition}
\newtheorem{thm}[lem]{Theorem}
\theoremstyle{definition}
\newtheorem{defn}[lem]{Definition}
\newtheorem{ex}[lem]{Example}
\newtheorem{question}[lem]{Question}
\newtheorem{disc}[lem]{Remark}
\newtheorem{rmk}[lem]{Remark}
\newtheorem{notn}[lem]{Notation}
\newtheorem{fact}[lem]{Fact}
\newtheorem{assumption}[lem]{Assumption}
\theoremstyle{remark}
\newcommand{\Hom}{\operatorname{Hom}}
\newcommand{\s}{\mathfrak{S}}
\newcommand{\im}{\operatorname{Im}}
\newcommand{\Cl}{\operatorname{Cl}}
\newcommand{\cone}{\operatorname{Cone}}
\newcommand{\Ker}{\operatorname{Ker}}
\newcommand{\ideal}[1]{\mathfrak{#1}}
\newcommand{\p}{\ideal{p}}
\newcommand{\q}{\ideal{q}}
\newcommand{\fa}{\ideal{a}}
\newcommand{\fb}{\ideal{b}}
\newcommand{\sfk}{\mathsf k}
\newcommand{\ol}{\overline}
\newcommand{\wti}{\widetilde}
\newcommand{\bbz}{\mathbb{Z}}
\newcommand{\bbn}{\mathbb{N}}
\newcommand{\bbq}{\mathbb{Q}}
\newcommand{\xra}{\xrightarrow}
\newcommand{\ve}{\varepsilon}
\newcommand{\te}{\theta}
\newcommand{\x}{\mathbf{x}}
\newcommand{\tri}{\trianglelefteq}
\newcommand{\gd}{\delta}
\newcommand{\gs}{\sigma}
\newcommand{\gl}{\lambda}
\newcommand{\gz}{\zeta}
\renewcommand{\geq}{\geqslant}
\renewcommand{\leq}{\leqslant}
\renewcommand{\ker}{\Ker}
\newcommand{\Ext}[4][R]{\operatorname{Ext}_{#1}^{#2}(#3,#4)}
\renewcommand{\Hom}[3][R]{\operatorname{Hom}_{#1}(#2,#3)}
\newcommand{\ssm}{\smallsetminus}
\numberwithin{equation}{lem}
\newcommand{\mfx}{\mathfrak{X}}
\newcommand{\ull}{\underline\ell}
\begin{document}

\bibliographystyle{amsplain}

\author{Sean K. Sather-Wagstaff}

\address{School of Mathematical and Statistical Sciences,
Clemson University,
O-110 Martin Hall, Box 340975, Clemson, S.C. 29634,
USA}

\email{ssather@clemson.edu}

\urladdr{https://ssather.people.clemson.edu/}

\author{Tony Se}

\address{Department of Mathematics,
West Virginia University,
320 Armstrong Hall,
P.O. Box 6310,
Morgantown, WV,
26506-6310,
USA}

\email{tony.se@mail.wvu.edu}

\urladdr{http://community.wvu.edu/~tts00001/}

\author{Sandra Spiroff}

\address{Department of Mathematics,
University of Mississippi,
Hume Hall 335, P.O. Box 1848, University, MS 38677,
USA}

\email{spiroff@olemiss.edu}

\urladdr{http://math.olemiss.edu/sandra-spiroff/}

\thanks{
Sandra Spiroff was supported in part by Simons Foundation Collaboration Grant 584932.}

\title{Ladder determinantal rings over normal domains}



\keywords{canonical module, ladder determinantal ring, Gorenstein ring, semidualizing module}
\subjclass[2010]{13C20,13C40}

\begin{abstract} 
We explicitly describe the divisor class groups and semidualizing modules for ladder determinantal rings with coefficients in an arbitrary normal domain for arbitrary ladders, not necessarily connected, and all sizes of minors. 
\end{abstract}

\maketitle


\section{Introduction} \label{sec00}

Throughout this paper, let $\sfk$ be a field, and let $t\geq 2$ be an integer.
All rings are commutative with 1.
In this section, let $A$ be a normal domain, i.e., a noetherian, integrally closed  domain.

\

This paper investigates divisor class groups and semidualizing modules for ladder determinantal rings over $A$.  Ladder determinantal rings 
generalize the more classical determinantal rings that are central to the study Grassmannian and Scubert varieties~\cite{bruns:dr}, and they are useful for investigating Young tableaux~\cite{MR926272}.
These rings feature in a number of publications, e.g., \cite{Co, MR1413891, SWSeSpP1, SWSeSpP2}, hence we only briefly recall that a {\bf ladder} is a subset $Y$ of an $m \times n$ matrix $X=(X_{ij})$ of indeterminates satisfying the property that if $X_{ij},X_{pq}\in Y$ satisfy $i\leq p$ and $j\leq q$, then $X_{iq},X_{pj}\in Y$. To avoid trivialities, we assume that $X$ is the smallest matrix containing $Y$ and that every row and column of $Y$ is non-empty.
Let $I_t(Y)$  be the ideal of the polynomial ring $A[Y]$ generated by the $t \times t$ minors lying entirely in $Y$. Then $A_t(Y) := A[Y]/I_t(Y)$ is a {\bf ladder determinantal ring} of $t$-minors.  (Note that this notation differs slightly from that in \cite{Co, MR1413891,SWSeSpP1,SWSeSpP2}.  We require the extra flexibility afforded by this notation since we analyze the ladder construction over  
different coefficient
rings.)

In the main result of this paper, Theorem~\ref{thm181230a}, we determine how many non-isomorphic semidualizing modules the ring $A_t(Y)$ has, where $A$ is not necessarily a field, and regardless of the size of the $t$-minors or any connectedness conditions on $Y$.  Recall that for a commutative noetherian ring $R$, a finitely generated $R$-module $C$ is \textbf{semidualizing} if $\Hom CC\cong R$ and $\Ext iCC=0$ for all $i\geq 1$. 
The set  of isomorphism classes of 
semidualizing
$R$-modules is denoted $\s_0(R)$.
One reason to want to understand semidualizing modules is that they provide nice dualities.
For instance, the free $R$-module of rank 1 is semidualizing, giving rise to the classical duality $(-)^*=\Hom -R$
which is crucial, e.g., for Auslander and Bridger's G-dimension~\cite{auslander:smt}.
For another example, if $R$ is Cohen-Macaulay and either 
complete
local or standard graded, then the canonical module $\omega$ of $R$ is semidualizing, yielding 
Grothendieck's~\cite{hartshorne:lc} local duality $(-)^\dagger=\Hom -\omega$. 

We now summarize the contents of the paper.
In Section~\ref{sec190719a} we prove our main result, Theorem~\ref{thm181230a}.  
Section~\ref{sec190719b} analyzes divisor class groups of ladder determinantal rings over normal domains, the key tool for our proof of Theorem~\ref{thm181230a}; see Corollary~\ref{cor190402a} and Theorem~\ref{thm190402a} for our main conclusions about 
divisor class groups.
Appendix~\ref{sec181225bx} contains technical results about ladders and ladder determinantal rings, including two isomorphisms for localizations for use in the other sections.


We conclude this Introduction with a few facts for use throughout the paper.

\begin{disc}\label{disc181224a}
If $R$ is an integral domain, then $R$ either contains a field or an isomorphic copy of $\bbz$.
In particular, $R$ contains a subring $D$ that is  a principal ideal domain. (In this paper, we consider fields to be principal ideal domains.)
With this set-up, every torsion-free $R$-module is torsion-free over $D$, hence flat over $D$; in particular, $R$ is flat over $D$, as is every reflexive $R$-module.
\end{disc}

In this paper, we use the description of the {\bf divisor class group} of 
our
normal domain $A$ as the set of isomorphism classes
of finitely generated rank-1 reflexive $A$-modules with operations
$[\fa]+[\fb]=[(\fa\otimes_A\fb)^{**}]$ and $[\fa]-[\fb]=[\Hom[A]\fb\fa]$, and with additive identity $[A]$.

\begin{fact}[\protect{\cite[Proposition~3.4]{sather:divisor}}]\label{lem190103a}
If $N$ is a semidualizing $A$-module, then $N$ is reflexive of rank 1.
In particular, we have $\s_0(A)\subseteq\Cl(A)$.
\end{fact}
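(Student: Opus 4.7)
Plan: The plan is to establish two properties of a semidualizing $A$-module $N$: that $N$ has generic rank $1$, and that $N$ is reflexive. The containment $\s_0(A)\subseteq\Cl(A)$ then follows by sending each isomorphism class $[N]\in\s_0(A)$ to itself, now viewed as an element of $\Cl(A)$.

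For the rank claim, let $K$ denote the field of fractions of the domain $A$. Localizing the homothety isomorphism $A\xrightarrow{\cong}\Hom[A]{N}{N}$ at the zero ideal yields an isomorphism $K\xrightarrow{\cong}\Hom[K]{N_K}{N_K}$, where $N_K := N\otimes_A K$. If $\dim_K N_K = n$, then $\dim_K\Hom[K]{N_K}{N_K} = n^2$, forcing $n=1$.

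For reflexivity, I would proceed by a local analysis. Since $N$ is finitely generated and $A$ is noetherian, $N_\p$ is a semidualizing $A_\p$-module for every $\p\in\Spec A$. When $\Ht(\p)\leq 1$, the ring $A_\p$ is either a field or, because $A$ is normal, a discrete valuation ring; in either case $A_\p$ is regular, so every semidualizing $A_\p$-module is isomorphic to $A_\p$ (any semidualizing module of finite projective dimension is free, and over a regular local ring everything has finite projective dimension). Hence $N$ is locally free of rank $1$ in codimension $1$. For primes of higher height, I would invoke the standard equality $\depth_{A_\p} N_\p = \depth A_\p$ that holds for every semidualizing module over a local ring. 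Combined with Serre's condition $(S_2)$ for the normal domain $A$, this yields $(S_2)$ for $N$. Over a normal domain, a finitely generated module that is locally free of rank $1$ in codimension $1$ and satisfies $(S_2)$ is automatically reflexive, by the standard divisorial-ideal criterion: the biduality map $N\to N^{**}$ is an isomorphism at every prime of height at most $1$, both source and target satisfy $(S_2)$, and a module satisfying $(S_2)$ over a domain has no associated primes of positive height, so the kernel and cokernel (supported in codimension $\geq 2$) must vanish.

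The main technical input is the depth equality $\depth_R N = \depth R$ for a semidualizing module $N$ over a local ring $(R,\fm)$; I would cite this from Christensen's monograph or the foundational work of Foxby and Golod rather than re-derive it. Everything else is routine localization, regularity, and $(S_2)$-bookkeeping over a normal domain.
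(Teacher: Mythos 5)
The paper does not prove this statement at all: it is imported verbatim as a Fact from \cite[Proposition~3.4]{sather:divisor}, so there is no internal proof to compare against. Your argument is correct and is essentially the standard one (and, as far as I can tell, the one in the cited source): rank $1$ from $\operatorname{End}_K(N\otimes_AK)\cong K$, freeness in codimension $\leq 1$ from regularity of $A_\fp$ there via $(R_1)$, the depth equality $\depth_{A_\fp}N_\fp=\depth A_\fp$ together with $(S_2)$ for $A$, and then the Bruns--Herzog-type reflexivity criterion. The only small imprecision is your justification for why the \emph{cokernel} of $N\to N^{**}$ vanishes: the absence of positive-height associated primes of $N$ kills the kernel, but the cokernel requires the depth-lemma count on $0\to N_\fp\to N^{**}_\fp\to C_\fp\to 0$ using $\depth N_\fp\geq 2$ and $\depth N^{**}_\fp\geq 1$; this is exactly what the standard criterion you invoke encapsulates, so the proof stands.
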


\begin{ex}\label{ex181224c}
Let $Y$ be a ladder of variables,
and let
$D$ be a 
principal ideal domain contained in $A$ as a subring;
see Remark~\ref{disc181224a}.
(This construction holds for any integral domain, but our application of the example will be when the ring is normal.)
Then the natural inclusions 
make the following diagram commute
and we have
$A\otimes_DD_t(Y)\cong A_t(Y)$.
\begin{equation}\begin{split}\label{diag181225a}
\xymatrix{D\ar[r]\ar[d]&A\ar[d]\\
D_t(Y)\ar[r]&A_t(Y)
}\end{split}\end{equation}
Thus, \cite[Proposition~3.5]{SWSeSpG} 
yields a well-defined, relation-respecting 
map
$\s_0(A)\times\s_0(D_t(Y))\to\s_0(A_t(Y))$ given by 
$([C_1],[C_2])\mapsto [C_1\otimes_D C_2]$, 
where the relation $\tri$ on $\s_0(A)$ is defined via
total reflexivity, as described in~\cite{frankild:rbsc} 
and~\cite[Definition~1.5]{SWSeSpG},
and similarly for $\s_0(D_t(Y))$ and $\s_0(A_t(Y))$; and $\s_0(A)\times\s_0(D_t(Y))$ uses the product relation.
Note that $\tri$ is reflexive; it is antisymmetric if and only if the Picard group of $A$ is trivial,
and it is suspected to be transitive.
\end{ex}

\section{Divisor Class Groups}
\label{sec190719b}

\begin{assumption}\label{ass190807a}
Throughout this section, 
let $A$ be a normal domain with field of fractions $K$, and let $Y$ be a ladder of variables (not necessarily path-connected or $t$-connected).
Let $D$ be a subring of $A$ that is 
a principal ideal domain, and let $L$ denote the field of fractions of $D$.
Let $f$ be as in Notation~\ref{notn190329a}.
Recall that $A$ is flat over $D$ by Remark~\ref{disc181224a}.
\end{assumption}

Our results below use the following explicit description of $\Cl(\sfk_t(Y))$, where $Y$ is a $t$-connected ladder of variables. 
Conca~\cite[pp.~467--468]{MR1413891} states this partially, but we require slightly more detail for our applications.

\begin{fact}\label{fact190701a}
Assume that $Y$ is $t$-connected.
Conca~\cite[pp.~467--468]{MR1413891} states that $\Cl(\sfk_t(Y))$ is a free abelian group of rank $h+k^*+1=h^*+k+1$.
Moreover, using the sketch provided in \emph{loc.\ cit.} with Notation~\ref{notn190701a}, 
one finds that a basis for $\Cl(\sfk_t(Y))$ is given by the classes of the ideals
$\q_i$ where $i=1,\ldots,h+1$, and the classes of the ideals $\p_j$  where $j$ ranges through the $T_j'$ of type 1.
In particular, the basis for $\Cl(\sfk_t(Y))$ is independent of the field $\sfk$, depending only on the shape of $Y$.
From this, it follows that if $\sfk\to K$ is a field extension, then the induced map $\Cl(\sfk_t(Y))\to\Cl(K_t(Y))$ is an isomorphism.
A key ingredient of the proof is to show that the minimal primes of the element $f\in\sfk_t(Y)$ from Notation~\ref{notn190329a}
are exactly the $\q_i$, the $\q_i'$, and the $\p_j$
(for $\p_j \neq 0$).
See Remark~\ref{disc190808b} for the $t$-disconnected case.
\end{fact}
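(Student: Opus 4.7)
The plan is to follow Conca's sketch in~\cite{MR1413891} while tracking enough detail to pin down the explicit basis and establish the field-independence claim. The main engine is Nagata's theorem on class groups under localization: for a Krull domain $R$ and a nonzero element $f \in R$, there is an exact sequence
\[
\bigoplus_{\p \in \Min(fR)} \bbz\cdot[\p] \xra{\pi} \Cl(R) \to \Cl(R[f^{-1}]) \to 0,
\]
whose kernel is generated by the principal divisors of nonzero elements of $R$ that become units after inverting $f$. My first step would be to compute $\dvsr(f)$ using the explicit description of $f$ from Notation~\ref{notn190329a} and the ladder data. This identifies the minimal primes of $fR$ precisely as the $\q_i$, the $\q_i'$, and the nonzero $\p_j$, supplying the ``key ingredient'' highlighted in the fact. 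Second, I would observe that $\sfk_t(Y)[f^{-1}]$ is, by the localization isomorphisms for ladder determinantal rings collected in Appendix~\ref{sec181225bx}, a localization of a polynomial ring over $\sfk$; hence it is a UFD with trivial class group, and $\pi$ is surjective.

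Third, I would analyze $\ker\pi$. The principal relation $\dvsr(f) = 0$ already cuts down the free group on these minimal primes, and further principal divisors (of natural ladder minors and ``corner'' entries) identify each class $[\q_i']$ with a $\bbz$-linear combination of $[\q_i]$'s and $[\p_j]$'s, while simultaneously eliminating the $[\p_j]$'s attached to $T_j'$ not of type~$1$. A careful accounting matches the known rank $h+k^*+1 = h^*+k+1$ (the two expressions being equal by a combinatorial identity on $Y$) and shows that the surviving generators
\[
\{[\q_i] : 1 \leq i \leq h+1\} \cup \{[\p_j] : T_j' \text{ of type } 1\}
\]
are $\bbz$-linearly independent, hence form a basis.

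The field-independence and the isomorphism $\Cl(\sfk_t(Y)) \to \Cl(K_t(Y))$ induced by a field extension $\sfk \to K$ then follow immediately: the prime ideals $\q_i$, $\q_i'$, $\p_j$, the element $f$, and every principal relation entering the argument above are defined from the combinatorial shape of $Y$ alone. Thus the faithfully flat base change $\sfk_t(Y) \to K_t(Y)$ sends each basis element on the source to the corresponding basis element on the target. The hardest part of the plan will be the third step: making explicit all principal divisors needed to cut out $\ker\pi$ and verifying that no nontrivial relation remains among the proposed basis. This is the piece that Conca only sketches and where the ladder combinatorics require the most careful bookkeeping.
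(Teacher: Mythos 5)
Your outline — Nagata's theorem applied to $f$, the identification of $\Min(f)$ with the $\q_i$, $\q_i'$, and nonzero $\p_j$, triviality of $\Cl(\sfk_t(Y)_f)$ via the localization isomorphism $\sfk_t(Y)_f\cong\sfk[B]_F$, and then relation-chasing among principal divisors to extract the basis $\{[\q_i]\}\cup\{[\p_j]:T_j'\text{ of type }1\}$ — is exactly the argument the paper invokes, since this statement is recorded as a Fact that defers to Conca's sketch on pp.~467--468 of~\cite{MR1413891} and explicitly names the computation of $\Min(f)$ as the key ingredient. Like the paper, you leave the detailed bookkeeping of the kernel relations to Conca, and your observation that every ideal and relation involved depends only on the shape of $Y$ is precisely how the paper justifies field-independence and the isomorphism $\Cl(\sfk_t(Y))\to\Cl(K_t(Y))$.
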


\begin{ex}\label{ex190707a}
We compute $\Cl(\sfk_3(L_i))$ for the ladders $L_i$ from Example~\ref{cornertypes}.  All of the ideals listed appear in the corresponding rows in Tables~\ref{tableq} and \ref{tablep} in Remark~\ref{disc190808a}.
\smallskip

In $L_1$, the upper inside corner $T_1'$ has type 1, so $k^*=1$. 
Since $h=1$, $\Cl(\sfk_3(L_1))\cong \bbz^3$ with basis represented by the ideals $\q_1, \q_2, \p_1$.
\smallskip

In $L_2$, the upper inside corner $T_1'$ has type 2, so $k^*=0$.
Since $h=1$, $\Cl(\sfk_3(L_2))\cong \bbz^2$ with basis represented by the ideals $\q_1, \q_2$.
\smallskip

Similarly, we have $\Cl(\sfk_3(L_3))\cong \bbz^3$ with basis represented by the ideals $\q_1, \q_2, \p_1$
and $\Cl(\sfk_3(L_4))\cong \bbz^2$ with basis represented by the ideals $\q_1, \q_2$.
\end{ex}
\medskip

Our next result confirms a statement of Conca~\cite[p.~457]{MR1413891} about splitting divisor class groups.  Note that our results do not assume Conca's Assumption~(d).

\begin{prop}
\label{prop190402a}
Assume that $Y$ is $t$-connected, and
let $A\xra{g_1} A_t(Y)\xra{h_2}K_t(Y)$ be the natural flat maps.
\begin{enumerate}[\rm(a)]
\item\label{prop190402a1}
The following sequence is split-exact:
\begin{equation}\label{eq190402a0}
0\to\Cl(A)\xra{\Cl(g_1)}
\Cl(A_t(Y))\xra{\Cl(h_2)}\Cl(K_t(Y))\to 0.
\end{equation}
In particular, $\Cl(A_t(Y))\cong\Cl(A)\times\Cl(K_t(Y))$.
\item\label{prop190402a2}
The ring $A$ is a unique factorization domain if and only if the natural map $\Cl(A_t(Y))\xra{\Cl(h_2)}\Cl(K_t(Y))$ is an isomorphism.
\end{enumerate}
\end{prop}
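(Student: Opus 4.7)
The plan is to prove part (a) by applying Nagata's theorem to the localization $h_2$ and exploiting a ring-theoretic retraction of $g_1$ to handle injectivity, with splitting coming for free from the structure of $\Cl(K_t(Y))$. First I would note that $g_1\colon A \to A_t(Y)$ is faithfully flat, using the base-change identification $A_t(Y) \cong A \otimes_D D_t(Y)$ of Example~\ref{ex181224c} together with the standard fact that $D_t(Y)$ is a flat (indeed, free) $D$-module. Second, since every $t \times t$ minor has degree $t \geq 2$ in the variables $Y_{ij}$, the augmentation $A[Y] \to A$ sending each $Y_{ij}$ to $0$ descends to a ring homomorphism $\pi\colon A_t(Y) \to A$ satisfying $\pi \circ g_1 = \operatorname{id}_A$.

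With these in hand, $h_2$ realizes $K_t(Y)$ as the localization of $A_t(Y)$ at $S := A \setminus \{0\}$, so Nagata's theorem yields surjectivity of $\Cl(h_2)$ and identifies $\ker \Cl(h_2)$ as the subgroup generated by classes of height-one primes $\mathfrak{P}$ of $A_t(Y)$ meeting $S$. Any such $\mathfrak{P}$ contracts to a nonzero prime $\p := \mathfrak{P} \cap A$; because $A_t(Y)/\p A_t(Y) \cong (A/\p)_t(Y)$ is a domain (by $t$-connectedness of $Y$), the extension $\p A_t(Y)$ is itself prime, and the chain $(0) \subsetneq \p A_t(Y) \subseteq \mathfrak{P}$ together with $\Ht(\mathfrak{P}) = 1$ forces $\mathfrak{P} = \p A_t(Y)$ and $\Ht(\p) = 1$. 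Thus $\ker \Cl(h_2) = \operatorname{Im} \Cl(g_1)$. For injectivity of $\Cl(g_1)$ I would invoke $\pi$: if $\fa \subseteq A$ is a divisorial ideal with $\fa \otimes_A A_t(Y) \cong A_t(Y)$ as $A_t(Y)$-modules, applying $- \otimes_{A_t(Y)} A$ along $\pi$ returns $\fa \cong A$. Finally, $\Cl(K_t(Y))$ is free abelian by Fact~\ref{fact190701a}, so the surjection $\Cl(h_2)$ splits automatically, yielding the asserted product decomposition.

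Part (b) is then an immediate consequence of the splitting: $\Cl(h_2)$ is an isomorphism exactly when its kernel $\Cl(A)$ vanishes, which is equivalent to $A$ being a unique factorization domain. The step I expect to be most delicate is the identification of the height-one primes of $A_t(Y)$ meeting $S$ with extensions of height-one primes of $A$: it depends on $t$-connectedness to ensure $\p A_t(Y)$ is prime (not merely radical), and it uses implicitly that $A_t(Y)$ itself is a normal domain, a fact inherited from normality of $A$ via flatness of $g_1$ and normality of the fibres $(A/\p)_t(Y)$.
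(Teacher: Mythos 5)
Your proof is correct, and its skeleton coincides with the paper's: Nagata's theorem gives surjectivity of $\Cl(h_2)$ and identifies the kernel with the subgroup generated by height-one primes meeting $A\ssm\{0\}$; these are shown to be the extended primes $\p A_t(Y)$; and the splitting is free because $\Cl(K_t(Y))$ is free abelian by Fact~\ref{fact190701a}. The one place you genuinely diverge is the injectivity of $\Cl(g_1)$. The paper simply cites faithful flatness of $g_1$ (i.e., the standard fact from Fossum that a faithfully flat extension of Krull domains induces an injection on class groups), whereas you use the augmentation $\pi\colon A_t(Y)\to A$ killing the variables, which is well defined since the $t$-minors have degree $t\geq 2$. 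Your version works because, in the paper's module-theoretic description of $\Cl$, flatness of $g_1$ gives $(\fa\otimes_AA_t(Y))^{**}\cong\fa^{**}\otimes_AA_t(Y)\cong\fa\otimes_AA_t(Y)$ for $\fa$ finitely generated reflexive, so triviality of $\Cl(g_1)([\fa])$ really does mean $\fa\otimes_AA_t(Y)\cong A_t(Y)$, and then associativity of tensor along the retraction $\pi\circ g_1=\operatorname{id}_A$ recovers $\fa\cong A$ with no flatness of $\pi$ needed. This is more self-contained than the paper's citation, at the cost of leaning on the reflexivity-preservation remark; conversely, your expansion of the kernel computation (primeness of $\p A_t(Y)$ via the domain $(A/\p)_t(Y)$, plus the height-one chain argument) is exactly the content the paper compresses into ``it is straightforward to show.'' The only cosmetic quibble is your parenthetical attributing the domain property of $(A/\p)_t(Y)$ to $t$-connectedness: the cleaner reason, used elsewhere in the paper, is that $(A/\p)_t(Y)$ embeds by flatness into $G_t(Y)$ for $G$ the fraction field of $A/\p$, which is a (normal) domain.
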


\begin{proof}
\eqref{prop190402a1}
Since $K_t(Y)$ is obtained from $A_t(Y)$ by inverting the non-zero elements of $A$, 
Nagata's Theorem~\cite[Corollary~7.2]{fossum:dcgkd}  
tells us that $\Cl(h_2)$ is surjective with kernel generated by all the height-1 primes of $A_t(Y)$ containing non-zero elements of $A$.
It is straightforward to show that these primes are exactly the ideals of $A_t(Y)$ extended from height-1 primes of 
$A$.
Thus, $\ker(\Cl(h_2))=\im(\Cl(g_1))$.
Furthermore, $\Cl(g_1)$ is injective since $A_t(Y)$ is faithfully flat over $A$.
This establishes the exactness of the sequence~\eqref{eq190402a0}.
The sequence splits because $\Cl(K_t(Y))$ is free by Fact~\ref{fact190701a}.

\eqref{prop190402a2}
This follows directly from part~\eqref{prop190402a1} as $A$ is a UFD if and only if $\Cl(A)=0$.
\end{proof}

We next boot-strap our way to a version of Proposition~\ref{prop190402a} with no $t$-connected assumption.
Note that part~\eqref{cor190402a1} is an improvement of part of Fact~\ref{fact190701a};
see also Proposition~\ref{prop190402b}.

\begin{cor}
\label{cor190402a}
Let $Y_1,\ldots,Y_s$ be the $t$-components of $Y$.
\begin{enumerate}[\rm(a)]
\item\label{cor190402a1}
There is an isomorphism $\Cl(K_t(Y))\cong\Cl(K_t(Y_1))\times\cdots\times \Cl(K_t(Y_s)).$
In particular, $\!\Cl(K_t(Y))\!$ is free of finite rank, and the rank is independent of the field.
\item\label{cor190402a2}
The following sequence is split-exact:
\begin{equation}\label{eq190402a1}
0\to\Cl(A)\xra{\Cl(g_1)}
\Cl(A_t(Y))\xra{\Cl(h_2)}\Cl(K_t(Y))\to 0.
\end{equation}
In particular, we have $\Cl(A_t(Y))\cong\Cl(A)\times\Cl(K_t(Y))$.
\item\label{cor190402a3}
The ring $A$ is a unique factorization domain if and only if the natural map $\Cl(A_t(Y))\xra{\Cl(h_2)}\Cl(K_t(Y))$ is an isomorphism.
\end{enumerate}
\end{cor}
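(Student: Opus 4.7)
The plan is to bootstrap from the $t$-connected case in Proposition~\ref{prop190402a} by reducing the computation along the $t$-component decomposition $Y=Y_1\cup\cdots\cup Y_s$, using the structural results on ladder determinantal rings from the appendix.

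For part~\eqref{cor190402a1}, the strategy is to exhibit $\Cl(K_t(Y))$ as a direct product of the groups $\Cl(K_t(Y_j))$ by analyzing how the height-1 primes behave under the decomposition. By Fact~\ref{fact190701a}, each $\Cl(K_t(Y_j))$ is a free abelian group with an explicit basis (given by certain $\q_i$ and $\p_j$) determined only by the shape of $Y_j$. I would use the localization isomorphisms from Appendix~\ref{sec181225bx} to build $K_t(Y)$ from the $K_t(Y_j)$ in a flat manner that separates the components, then apply Nagata's theorem iteratively to peel off one component at a time. Each step contributes only the height-1 primes coming from that component, and these remain linearly independent in $\Cl(K_t(Y))$. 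Hence $\Cl(K_t(Y))\cong\prod_j\Cl(K_t(Y_j))$, which is free of finite rank, and by Fact~\ref{fact190701a} the rank depends only on the shape of $Y$, not on the field.

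For part~\eqref{cor190402a2}, the proof of Proposition~\ref{prop190402a}\eqref{prop190402a1} transfers essentially verbatim. Nagata's theorem applied to the localization $A_t(Y)\to K_t(Y)$ (obtained by inverting the non-zero elements of $A$) identifies $\ker(\Cl(h_2))$ with the classes of height-1 primes of $A_t(Y)$ containing non-zero elements of $A$, and these are precisely the ideals extended from height-1 primes of $A$, giving $\ker(\Cl(h_2))=\im(\Cl(g_1))$. Injectivity of $\Cl(g_1)$ follows from faithful flatness of $A_t(Y)$ over $A$. The sequence splits because $\Cl(K_t(Y))$ is free by part~\eqref{cor190402a1}; this is the only place where $t$-connectedness was used in the earlier proof, and part~\eqref{cor190402a1} now supplies the missing ingredient in the general case. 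Part~\eqref{cor190402a3} follows at once from~\eqref{cor190402a2}, since $\Cl(A)=0$ exactly when $A$ is a UFD.

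The main obstacle is part~\eqref{cor190402a1}. The geometric intuition that each $t$-component contributes its own block of generators independently is clear, but making the bookkeeping precise requires careful use of the appendix's isomorphisms to ensure that the class group splits cleanly, without collapsing any of the contributions or introducing spurious relations across components. The delicate technical step will be verifying that the filtration of $K_t(Y)$ by partial unions of components is assembled from flat ring maps to which Nagata's theorem applies at each stage.
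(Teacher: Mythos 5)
Your proposal matches the paper's proof in essence: part (a) is handled by induction on the number of components, peeling off $Y_1$ via the identification $K_t(Y)\cong K_t(Y_1)\otimes_K K_t(Y')\cong (K_t(Y'))_t(Y_1)$ and invoking Proposition~\ref{prop190402a}(a) with the normal domain $K_t(Y')$ as coefficient ring (together with the field-independence in Fact~\ref{fact190701a}), while parts (b) and (c) repeat the proof of Proposition~\ref{prop190402a} verbatim with part (a) supplying the freeness needed for the splitting, exactly as you observe. The only cosmetic difference is that the paper packages your ``iterated Nagata'' step as a single application of Proposition~\ref{prop190402a}(a) over $K_t(Y')$, which automatically disposes of the independence/bookkeeping concern you flag as the delicate point.
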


\begin{proof}
\eqref{cor190402a1}
For the direct product decomposition,
induct on $s$. The base case is trivial. In the inductive case, set $Y'=Y\ssm Y_1$, which is the disjoint union $Y_2\cup\cdots\cup Y_s$.
Then $K_t(Y)\cong K_t(Y_1)\otimes_KK_t(Y')\cong(K_t(Y'))_t(Y_1)$. Apply Proposition~\ref{prop190402a}\eqref{prop190402a1}
with the normal domain $A=K_t(Y')$ and the $t$-connected ladder $Y_1$ to conclude that
$\Cl(K_t(Y))\cong\Cl(K_t(Y_1))\times\Cl(K_t(Y'))$.
Now apply the induction hypothesis to $Y'$ to obtain the desired decomposition.

Since each abelian group $\Cl(K_t(Y_i))$ is free of finite rank by Fact~\ref{fact190701a},
it  follows that $\Cl(K_t(Y))\cong\Cl(K_t(Y_1))\times\cdots\times \Cl(K_t(Y_s))$ is free of finite rank as well.
Fact~\ref{fact190701a} also establishes that the rank of each group $\Cl(K_t(Y_i))$ is independent of the field $K$, hence so is the rank of $\Cl(K_t(Y))$.

\eqref{cor190402a2}--\eqref{cor190402a3}
These are proved like Proposition~\ref{prop190402a} using part~\eqref{cor190402a1}.
\end{proof}

We improve part of Fact~\ref{fact190701a} in the next Proposition.

\begin{prop}
\label{prop190402b}
Consider the field extension $L\to K$ from~\ref{ass190807a}.
The natural map $\Cl(L_t(Y)) \to\Cl(K_t(Y))$ is an isomorphism.
\end{prop}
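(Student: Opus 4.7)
The plan is to reduce to the $t$-connected case established at the end of Fact~\ref{fact190701a} by splitting $Y$ into its $t$-components and invoking Corollary~\ref{cor190402a}(a). Write $Y = Y_1 \cup \cdots \cup Y_s$ as the disjoint union of its $t$-components. Applying Corollary~\ref{cor190402a}(a) over both $L$ and $K$ gives isomorphisms
\[ \Cl(L_t(Y)) \cong \prod_{i=1}^s \Cl(L_t(Y_i)) \quad\text{and}\quad \Cl(K_t(Y)) \cong \prod_{i=1}^s \Cl(K_t(Y_i)). \]

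Next I would show that the natural map $\Cl(L_t(Y)) \to \Cl(K_t(Y))$ fits into a commutative diagram with these product decompositions on the right, where the product map is the product of the component maps $\Cl(L_t(Y_i)) \to \Cl(K_t(Y_i))$. This naturality follows by tracing through the inductive proof of Corollary~\ref{cor190402a}(a): the key isomorphism $K_t(Y) \cong K_t(Y_1) \otimes_K K_t(Y')$ has an obvious analogue for $L$, and base change along $L \to K$ commutes with this tensor product; correspondingly, the splitting of the exact sequence from Proposition~\ref{prop190402a}(a) is functorial with respect to the base field. (Alternatively, one can argue entirely at the level of the explicit basis described in Fact~\ref{fact190701a}, consisting of the classes of the $\q_i$ and $\p_j$, whose description depends only on the shape of $Y$ and not on the field.)

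Finally, for each $i$ the ladder $Y_i$ is $t$-connected, so the last sentence of Fact~\ref{fact190701a} yields that each map $\Cl(L_t(Y_i)) \to \Cl(K_t(Y_i))$ is an isomorphism. Hence the product map on the right of the diagram is an isomorphism, and by commutativity so is $\Cl(L_t(Y)) \to \Cl(K_t(Y))$. The main obstacle is verifying the naturality claim of the second paragraph, i.e.\ that the product decomposition of Corollary~\ref{cor190402a}(a) is compatible with the base-change map along $L \to K$; this amounts to a careful but formal induction tracking the isomorphisms used in the proof of that corollary.
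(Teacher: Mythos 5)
Your proposal is correct and follows essentially the same route as the paper's proof: decompose $Y$ into its $t$-components, apply the product decomposition of Corollary~\ref{cor190402a}(a) over both $L$ and $K$, check that the base-change map is compatible with these decompositions, and conclude from the $t$-connected case in Fact~\ref{fact190701a}. The paper likewise leaves the commutativity of the comparison diagram as a routine check ("one checks"), so your flagged "main obstacle" is treated the same way there.
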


\begin{proof}
Let $Y_1,\ldots,Y_s$ be the $t$-components of $Y$.
Fact~\ref{fact190701a} implies that each natural map $\Cl(L_t(Y_i))\to\Cl(K_t(Y_i))$ is an isomorphism,
hence, the product map $\prod_{i=1}^s\Cl(L_t(Y_i))\to\prod_{i=1}^s\Cl(K_t(Y_i))$ is as well.
One checks that the isomorphisms from Corollary~\ref{cor190402a}\eqref{cor190402a1} make the following diagram commute
$$\xymatrix{
\Cl(L_t(Y))\ar[r]^-\cong\ar[d]
&\prod_{i=1}^s\Cl(L_t(Y_i))\ar[d]^\cong\\
\Cl(K_t(Y))\ar[r]^-\cong
&\prod_{i=1}^s\Cl(K_t(Y_i)),}$$
and as a result, the unlabeled map is an isomorphism as well.
\end{proof}

The next result is an improved version of Corollary~\ref{cor190402a}\eqref{cor190402a2}.
It contains a bit more information for use in finding semidualizing modules.

\begin{thm}
\label{thm190402a}
The following natural maps 
$$\Cl(A)\times\Cl(D_t(Y))\to\Cl(A_t(Y))\to\Cl(A_t(Y)_f)\times\Cl(K_t(Y)).$$ 
are bijections, hence
$$\Cl(A_t(Y))\cong\Cl(A_t(Y)_f)\times\Cl(K_t(Y))\cong\Cl(A)\times\Cl(K_t(Y)).$$
\end{thm}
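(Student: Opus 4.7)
The plan is to reduce both asserted bijections to the split exact sequence
$$0 \to \Cl(A) \xra{\Cl(g_1)} \Cl(A_t(Y)) \xra{\Cl(h_2)} \Cl(K_t(Y)) \to 0$$
established in Corollary~\ref{cor190402a}(b). Both indicated maps are assembled from class-group homomorphisms induced by flat ring maps, hence are well-defined group homomorphisms, and my task is to show that each identifies this extension with a direct product in a preferred way.

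For the first map I would begin by showing that the composition $\varphi\colon\Cl(D_t(Y)) \to \Cl(A_t(Y)) \xra{\Cl(h_2)} \Cl(K_t(Y))$ is an isomorphism. Since the map $A_t(Y) \to K_t(Y)$ inverts every nonzero element of $A$ and in particular every nonzero element of $D$, this composition factors as $\Cl(D_t(Y)) \to \Cl(L_t(Y)) \to \Cl(K_t(Y))$; the first factor is an isomorphism by Corollary~\ref{cor190402a}(c) applied to the UFD $D$, and the second is an isomorphism by Proposition~\ref{prop190402b}. Consequently $\Cl(D_t(Y)) \to \Cl(A_t(Y))$ is injective with image complementary to $\ker(\Cl(h_2)) = \im(\Cl(g_1))$, and combined with the injectivity of $\Cl(g_1)$ this exhibits the first map as a bijection.

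For the second map, Nagata's theorem applied to the localization $A_t(Y) \to A_t(Y)_f$ gives that $\Cl(A_t(Y)) \to \Cl(A_t(Y)_f)$ is surjective with kernel generated by the classes $[P]$, where $P$ ranges over the minimal primes of $f$ in $A_t(Y)$. Combining Fact~\ref{fact190701a} (the $t$-connected case over a field), Remark~\ref{disc190808b}, the componentwise decomposition of Corollary~\ref{cor190402a}(a), and the height-$1$ prime correspondence coming from the flat localization $A_t(Y) \to K_t(Y)$, the images of these classes under $\Cl(h_2)$ form a free basis of $\Cl(K_t(Y))$. Hence the subgroup $H \leq \Cl(A_t(Y))$ they generate is a complement to $\im(\Cl(g_1)) = \ker(\Cl(h_2))$, so $\Cl(A_t(Y)) = \im(\Cl(g_1)) \oplus H$; it follows that $\Cl(A_t(Y)_f) \cong \Cl(A_t(Y))/H \cong \Cl(A)$, and the product map sends the two summands isomorphically onto $\Cl(A_t(Y)_f)$ and $\Cl(K_t(Y))$ respectively, yielding the second bijection. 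The displayed chain of isomorphisms then follows by combining the two bijections.

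The principal obstacle will be the careful identification of the minimal primes of $f$ in $A_t(Y)$ and the verification that their images in $\Cl(K_t(Y))$ form a free basis. Fact~\ref{fact190701a} provides this list only over a field and in the $t$-connected case; the upgrade to an arbitrary normal domain $A$ relies on the bijection between height-$1$ primes of $A_t(Y)$ disjoint from $A \ssm \{0\}$ and height-$1$ primes of $K_t(Y)$, while the $t$-disconnected extension is furnished by the componentwise decomposition of Corollary~\ref{cor190402a}(a). Once this bookkeeping is in place, the rest is a formal splicing of the split exact sequence of Corollary~\ref{cor190402a}(b).
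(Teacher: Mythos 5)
Your treatment of the first map is correct and is essentially the paper's argument: factoring $\Cl(D_t(Y))\to\Cl(K_t(Y))$ through $\Cl(L_t(Y))$ and invoking Corollary~\ref{cor190402a}(c) and Proposition~\ref{prop190402b} shows this composite is an isomorphism, which together with the split exact sequence of Corollary~\ref{cor190402a}(b) yields the internal direct sum $\Cl(A_t(Y))=\im(\Cl(g_1))\oplus\im(\Cl(g_2))$ and hence the first bijection.

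The second half has a genuine gap. Nagata's theorem does show that the kernel $H$ of $\Cl(A_t(Y))\to\Cl(A_t(Y)_f)$ is generated by the classes of \emph{all} minimal primes of $f$, but these classes do not map to a free basis of $\Cl(K_t(Y))$: by Fact~\ref{fact190701a}, in the $t$-connected field case the minimal primes of $f$ are the $\q_i$, the $\q_i'$, and the nonzero $\p_j$ --- that is $2(h+1)+k^*$ primes --- whereas the rank of the class group is only $h+1+k^*$, a basis being given by the $\q_i$ and the type-1 $\p_j$ alone. The classes $[\q_i']$ therefore satisfy nontrivial relations with the basis elements (coming from the principal divisors $\operatorname{div}(f_i)$), and what Nagata applied to $K_t(Y)\to K_t(Y)_f$ actually gives is only that $\Cl(h_2)(H)$ \emph{generates} $\Cl(K_t(Y))$, i.e.\ $\Cl(A_t(Y))=\im(\Cl(g_1))+H$. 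The directness of this sum --- equivalently, the injectivity of $H\to\Cl(K_t(Y))$, equivalently the assertion that every relation among these prime classes over $K$ already holds over $A$ --- is precisely the content that still needs proof, and your proposal does not supply it. The paper avoids this bookkeeping entirely: by Lemma~\ref{lem190329bxxx} one has $A_t(Y)_f\cong A[B]_F$ with $F$ a product of prime elements of the polynomial ring $A[B]$, so $\Cl(A)\to\Cl(A_t(Y)_f)$ is an isomorphism outright; the injectivity of this composite then forces $\im(\Cl(g_1))\cap H=0$, and the second bijection follows by the same formal splicing you used for the first. I recommend replacing your minimal-prime argument with that direct computation of $\Cl(A_t(Y)_f)$.
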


\begin{proof}
Consider the following natural maps.
\begin{gather*}
A\xra{g_1} A_t(Y)\xra{h_1}A_t(Y)_f\\
D_t(Y)\xra{g_2} A_t(Y)\xra{h_2}K_t(Y)
\end{gather*}
The map $g_1$ is flat (in fact, free and faithfully flat),
and $g_2$ is flat because $A$ is flat over $D$. 
Also, each $h_i$ is a localization map, hence flat. 

Next, there are commutative diagrams of flat maps of normal domains
$$\xymatrix{
A\ar[r]^-{g_1}\ar[d]
&A_t(Y)\ar[d]^{h_2}
&D_t(Y)\ar[r]^-{g_2}\ar[d]
&A_t(Y)\ar[d]^{h_1}
\\
K\ar[r]
&K_t(Y)
&D_t(Y)_f\ar[r]
&A_t(Y)_f
}$$
which induce commutative diagrams on divisor class groups
\begin{equation}\label{diag190402a}
\begin{split}
\xymatrix{
\Cl(A)\ar[r]^-{\Cl(g_1)}\ar[d]
&\Cl(A_t(Y))\ar[d]^{\Cl(h_2)}
&\Cl(D_t(Y))\ar[r]^-{\Cl(g_2)}\ar[d]
&\Cl(A_t(Y))\ar[d]^{\Cl(h_1)}
\\
0=\Cl(K)\ar[r]
&\Cl(K_t(Y))
&0=\Cl(D_t(Y)_f)\ar[r]
&\Cl(A_t(Y)_f).
}
\end{split}
\end{equation}
The first vanishing in~\eqref{diag190402a} comes from the fact that $K$ is a field. 
For the second vanishing, recall that $D$ is a 
principal ideal domain.  Since it is a unique factorization domain, so is 
$D[B]_F\cong D_t(Y)_f$, where $B$ and the isomorphism
are from Lemma~\ref{lem190329bxxx}.
These vanishings imply that, whenever $i\neq j$,
\begin{equation}\label{eq190402b}
0=\Cl(h_i)\circ\Cl(g_j)=\Cl(h_i\circ g_j).
\end{equation}

Next, consider the following diagram of group homomorphisms.
\begin{equation}
\label{diag190402b}
\begin{split}
\xymatrix{
\Cl(A)\times\Cl(D_t(Y))
\ar[rr]^-{\Cl(h_1\circ g_1)\times\Cl(h_2\circ g_2)}
\ar[rd]_{\Cl(g_1)+\Cl(g_2)\ \ }
&&
\Cl(A_t(Y)_f)\times\Cl(K_t(Y))\\
&\Cl(A_t(Y))
\ar[ru]_{\ \ (\Cl(h_1),\Cl(h_2))}
}
\end{split}
\end{equation}
A straightforward diagram chase using~\eqref{eq190402b} shows that this diagram commutes.

We claim that the horizontal map in diagram~\eqref{diag190402b} is an isomorphism. 
To show this, it suffices to show that the component maps 
$\Cl(h_1\circ g_1)$ and $\Cl(h_2\circ g_2)$
are isomorphisms. 
The first of these comes from the following natural commutative diagrams,
wherein the vertical isomorphism comes from the fact that the element $F$ from Notation~\ref{notn190329a} is a product of prime elements of the polynomial ring $A[B]$
from Lemma~\ref{lem190329bxxx}.
\begin{equation}
\label{diag190402c}
\begin{split}
\xymatrix{
A\ar[r]^-{h_1\circ g_1}\ar[d]
&A_t(Y)_f
&\Cl(A)\ar[r]^-{\Cl(h_1\circ g_1)}\ar[d]_\cong
&\Cl(A_t(Y)_f)\\
A[B]_F\ar[ru]_-\cong^-{\ref{lem190329bxxx}}
&&\Cl(A[B]_F)\ar[ru]_-\cong
}
\end{split}
\end{equation}
For the map $\Cl(h_2\circ g_2)\colon\Cl(D_t(Y))\to\Cl(K_t(Y))$, recall that $L$ is a subfield of $K$.
Factor the map $h_2\circ g_2\colon D_t(Y)\to K_t(Y)$ as the composition of the natural flat maps
$D_t(Y)\to L_t(Y)\to K_t(Y)$. It suffices to show that each map
$\Cl(D_t(Y))\to \Cl(L_t(Y))\to \Cl(K_t(Y))$ is an isomorphism, but this follows from 
Corollary~\ref{cor190402a}\eqref{cor190402a3} and Proposition~\ref{prop190402b}, respectively.
This establishes the claim.

Since the horizontal map in diagram~\eqref{diag190402b} is an isomorphism, commutativity of the diagram implies in particular
that the map $(\Cl(h_1),\Cl(h_2))$ is surjective. Thus, to complete the proof, it suffices to show that
this map is also injective. So, let $[\fa]\in\Cl(A_t(Y))$ be in $\ker(\Cl(h_1),\Cl(h_2))=\ker(\Cl(h_1))\cap\ker(\Cl(h_2))$.
Corollary~\ref{cor190402a}\eqref{cor190402a2} implies that
$\ker(\Cl(h_2))=\im(\Cl(g_1))$.
As a result, $[\fa]=\Cl(g_1)([\fb])$ for some $[\fb]\in\Cl(A)$.
Then the condition $\Cl(g_1)([\fb])=[\fa]\in\ker(\Cl(h_1))$ implies that
$$0=\Cl(h_1)(\Cl(g_1)([\fb]))=\Cl(h_1\circ g_1)([\fb]).
$$
Since it was established that $\Cl(h_1\circ g_1)$ is an isomorphism, $[\fb]=0$, and thus,
$[\fa]=\Cl(g_1)([\fb])=0$, as desired.
\end{proof}

\begin{disc}\label{disc190808b}
Let $Y_1,\ldots,Y_s$ be the $t$-components of $Y$.
For $\ell=1,\ldots,s$, let $k_\ell^*$ be the number of upper inside corners of $Y_\ell$ of type 1 (see Definition~\ref{defn:types}),
and let $h_\ell$ be the number of lower inside corners of $Y_\ell$.
The above results show that $\Cl(\sfk_t(Y))$ has a basis given by
$[(\q_i(Y_\ell))]$, with $i=1,\ldots,h_\ell+1$, and those
$[(\p_{j}(Y_\ell))]$ such that $T_{\ell,j}'$ is an upper inside corner of $Y_\ell$ of type 1.

The above paragraph shows how to
calculate $\Cl(k_t(Y)) \cong \Cl(k_{t-1}(Z))$, which is used in Section \ref{sec190719a}, e.g., Corollary \ref{thm:classgroup}, and more. 
In particular, if $Y$ is $t$-connected and $Z$ is obtained from $Y$ as in Assumption~\ref{ass190715a},
then a basis for $\Cl(\sfk_{t-1}(Z))$ 
is given as follows: let $Z_1,\ldots,Z_s$ be the $t$-components of $Z$ with respect to $(t-1)$; i.e., each $Z_\ell$ is $(t-1)$-connected.
Then a basis for $\Cl(\sfk_{t-1}(Z))$ 
is given by
$[(\q_i(Z_\ell))]$, with $i$ ranging through the lower outside corners of $Z_\ell$, or equivalently, ranging  through the lower outside corners of $Y$, and
$[(\p_{j}(Z_\ell))]$ such that $T_{\ell,j}'$ is an upper inside corner of $Z_\ell$ of type 1, or equivalently, ranging  through the upper inside corners of $Y$ of type 1.  See Remark~\ref{disc190808a} in regards to the running example.
\end{disc}


\section{Semidualizing Modules}\label{sec190719a}


\begin{assumption}\label{ass190807c}
Throughout this section, 
let $A$ be a normal domain with field of fractions $K$, and let $Y$ be a ladder of variables (not necessarily path-connected or $t$-connected).
Let $D$ be a subring of $A$ that is 
a principal ideal domain, and let $L$ denote the field of fractions of $D$.
Let $f = f_1\cdots f_{h+1}$ as in Notation~\ref{notn190329a}.
Let $B_1$ be the set of points of $Y$ of the lower border with thickness 1. Set
$Z=Y \setminus B_1$
and $x=x_{S_1}x_{S_2}\cdots x_{S_{h+1}}\in \sfk_t(Y)$
and $\mfx=X_{S_1}X_{S_2}\cdots X_{S_{h+1}}\in \sfk_t[Y]$.
\end{assumption}

\begin{lem}\label{lem190807b}
The natural map $\phi\colon\s_0(D_t(Y))\to\s_0(L_t(Y))$ is injective. 
\end{lem}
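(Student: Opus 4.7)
The strategy is to reduce injectivity of $\phi$ to injectivity of the induced map on divisor class groups, and then invoke Corollary~\ref{cor190402a}\eqref{cor190402a3}. Since $D$ is a principal ideal domain, in particular a unique factorization domain, that corollary applied with $A=D$ (so that $K=L$) yields that the natural map $\Cl(D_t(Y))\to\Cl(L_t(Y))$ is an isomorphism, hence injective.

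Next, I would invoke Fact~\ref{lem190103a} to obtain the natural injections $\s_0(D_t(Y))\hookrightarrow\Cl(D_t(Y))$ and $\s_0(L_t(Y))\hookrightarrow\Cl(L_t(Y))$. The map $\phi$ sends $[C]$ to $[L_t(Y)\otimes_{D_t(Y)} C]$, and to make sense of this one checks that $L_t(Y)\otimes_{D_t(Y)} C$ is semidualizing over $L_t(Y)$: this is a standard flat-base-change calculation, using that $D_t(Y)\to L_t(Y)$ is a (flat) localization and that $C$ is finitely presented over the noetherian ring $D_t(Y)$, so that $\Hom$ and $\Ext^{\geq 1}$ commute with the base change. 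Under the embeddings into the class groups, $\phi$ is just the restriction of the class-group map $[\fa]\mapsto[L_t(Y)\otimes_{D_t(Y)}\fa]$, giving a commutative square with vertical inclusions and the two horizontal maps $\phi$ and $\Cl(D_t(Y))\to\Cl(L_t(Y))$.

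Combining these ingredients, suppose $\phi([C_1])=\phi([C_2])$. Then the images of $[C_1]$ and $[C_2]$ in $\Cl(L_t(Y))$ coincide, and the injectivity (in fact, bijectivity) of $\Cl(D_t(Y))\to\Cl(L_t(Y))$ forces the classes to already agree in $\Cl(D_t(Y))$. Since two reflexive rank-$1$ modules with the same divisor class are isomorphic, $C_1\cong C_2$, so $[C_1]=[C_2]$ in $\s_0(D_t(Y))$, proving injectivity.

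The only nontrivial verification is that $\phi$ is well defined, i.e., that flat localization preserves the semidualizing property; the rest is formal once one has the class-group isomorphism coming from Corollary~\ref{cor190402a}\eqref{cor190402a3}. I do not foresee any serious obstacle, as the ingredients (flatness of the localization, noetherianity of $D_t(Y)$, and the fact that $D$ is a UFD) are all clearly in place under Assumption~\ref{ass190807c}.
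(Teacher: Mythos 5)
Your proof is correct and follows essentially the same route as the paper: both embed $\s_0$ into $\Cl$ via Fact~\ref{lem190103a}, apply Corollary~\ref{cor190402a}\eqref{cor190402a3} with $A=D$ (a UFD) to see that $\Cl(D_t(Y))\to\Cl(L_t(Y))$ is an isomorphism, and conclude by a diagram chase. The paper simply leaves the well-definedness of $\phi$ and the chase implicit, whereas you spell them out.
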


\begin{proof}
In the following commutative diagram
$$\xymatrix{
\Cl(D_t(Y))\ar[r]^-\cong
&\Cl(L_t(Y))\\
\s_0(D_t(Y))\ar[r]^-\phi\ar@{^(->}[u]
&\s_0(L_t(Y))\ar@{^(->}[u]
}$$
the top isomorphism is from Corollary~\ref{cor190402a}\eqref{cor190402a3}.
A diagram chase shows that $\phi$ is injective.
\end{proof}

The theorem below describes the semidualizing modules over $A_t(Y)$ where $Y$ is arbitrary and $A$ is a normal domain. Through a series of results, we are able to improve upon this by not only providing a more detailed description of $\s_0(A_t(Y))$, but also by removing the conditions \eqref{thm181226a1}--\eqref{thm181226a2}; see Theorem~\ref{thm181230a} below.  Here and elsewhere, if $U$ and $V$ are ordered sets, then the notation $U\approx V$ means that there is a perfectly relation-respecting bijection from $U$ to $V$.

\begin{thm}
\label{thm181226a}
Assume that at least one of the following conditions is satisfied:
\begin{enumerate}[\quad\rm(1)]
\item\label{thm181226a1}
$A$ contains a field, or
\item\label{thm181226a2}
the natural map $\s_0(D_t(Y))\to\s_0(L_t(Y))$ is surjective.
\end{enumerate}
Then the natural maps 
$$\s_0(A)\times\s_0(D_t(Y))\to\s_0(A_t(Y))\to\s_0(A_t(Y)_f)\times\s_0(K_t(Y))$$ 
are bijections, hence
\begin{align*}
\s_0(A_t(Y))&\approx\s_0(A_t(Y)_f)\times\s_0(K_t(Y))\approx\s_0(A)\times\s_0(K_t(Y)).
\end{align*}
\end{thm}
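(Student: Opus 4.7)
My plan is to adapt the proof of Theorem~\ref{thm190402a} (the divisor-class-group version), using the inclusion $\s_0(R)\hookrightarrow\Cl(R)$ from Fact~\ref{lem190103a} to transfer results to semidualizing modules. The strategy has three parts: (i) check that both maps are well-defined and respect $\tri$; (ii) deduce injectivity of each from the corresponding isomorphism of divisor class groups via a diagram chase as in Lemma~\ref{lem190807b}; and (iii) show that the composition is surjective. Bijectivity of each individual map then follows formally from (ii) and (iii).

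Well-definedness and the $\tri$-respecting property of the first map are handed to me by Example~\ref{ex181224c} (which invokes \cite[Proposition~3.5]{SWSeSpG}, using $A\otimes_D D_t(Y)\cong A_t(Y)$ and flatness of $A$ over $D$). The second map is a pair of flat base changes, and such base changes preserve semidualizing modules. For injectivity, I slot each of the two maps into a commutative square as in Lemma~\ref{lem190807b}: the vertical arrows are the inclusions from Fact~\ref{lem190103a}, and the top arrow is one of the bijections of Theorem~\ref{thm190402a}, so a short diagram chase gives injectivity of the bottom.

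For surjectivity of the composition, fix $([N_1],[N_2])\in\s_0(A_t(Y)_f)\times\s_0(K_t(Y))$ and produce a preimage of the form $[C_1\otimes_D C_2]$. To realize $[N_2]$, I factor $D_t(Y)\to K_t(Y)$ through $L_t(Y)$. The map $\s_0(L_t(Y))\to\s_0(K_t(Y))$ is bijective: injectivity comes from Proposition~\ref{prop190402b} and the diagram chase above, while surjectivity follows because $L\to K$, hence $L_t(Y)\to K_t(Y)$, is faithfully flat, so vanishing of $\Ext^{\geq 1}$ descends. Under condition~\eqref{thm181226a1} I may take $D$ to be a field inside $A$, giving $D=L$; under condition~\eqref{thm181226a2} the surjectivity of $\s_0(D_t(Y))\to\s_0(L_t(Y))$ is assumed, so in either case I can pull $[N_2]$ back to some $[C_2]\in\s_0(D_t(Y))$. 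To realize $[N_1]$, I use the isomorphism $A_t(Y)_f\cong A[B]_F$ of Lemma~\ref{lem190329bxxx}: base change along the polynomial extension $A\to A[B]$ induces a bijection on $\s_0$, and since $F$ is a product of prime elements whose divisor classes in $\Cl(A[B])\cong\Cl(A)$ are trivial, the further localization also preserves $\s_0$. This produces $[C_1]\in\s_0(A)$ with $C_1\otimes_A A_t(Y)_f\cong N_1$. Finally, comparison inside $\Cl(A_t(Y))\cong\Cl(A)\times\Cl(K_t(Y))$ via Theorem~\ref{thm190402a} forces $[C_1\otimes_D C_2]$ to be the desired lift of $([N_1],[N_2])$.

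The main obstacle will be upgrading the divisor-class isomorphism $\Cl(A)\cong\Cl(A_t(Y)_f)$ to a bijection on semidualizing modules. The easy direction is flat base change; the hard direction is descent, which is delicate because $A\to A_t(Y)_f$ is flat but not faithfully flat, so vanishing of $\Ext^i$ after tensoring does not automatically imply vanishing over $A$. My argument must exploit the factorization through the polynomial extension (which supplies faithfully flat descent along $A\to A[B]$) together with the principal-prime structure of $F$ to recover the missing $\Ext$-vanishing. The perfect compatibility with $\tri$ is then inherited throughout, since $\tri$ is defined entirely in terms of $\Ext$ and $\Hom$ conditions that behave well under the flat maps in play.
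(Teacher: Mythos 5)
Your proposal is correct and follows essentially the same route as the paper: combine the class-group isomorphisms of Theorem~\ref{thm190402a} with the inclusions $\s_0\hookrightarrow\Cl$ of Fact~\ref{lem190103a}, get injectivity by a diagram chase, handle the $K_t(Y)$-factor of the surjectivity via the factorization through $L_t(Y)$ under hypotheses \eqref{thm181226a1} or \eqref{thm181226a2}, and handle the $A_t(Y)_f$-factor via $A_t(Y)_f\cong A[B]_F$. The ``main obstacle'' you flag --- descent of the semidualizing property along the flat but not faithfully flat map $A\to A_t(Y)_f$ using the polynomial extension and the fact that $F$ is a product of primes --- is exactly what the paper outsources to \cite[Corollary~3.11(b)]{sather:divisor} (and the compatibility of the $\s_0$ and $\Cl$ diagrams to \cite[Theorem~4.4]{SWSeSpG}), so no new argument is needed there.
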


\begin{proof}
Consider the following  diagram where the vertical maps are the natural inclusions
and the upper triagram commutes as per the proof of Theorem~\ref{thm190402a}.
\begin{equation}
\label{diag181226a}
\begin{split}
\xymatrix{
\Cl(A)\times\Cl(D_t(Y))
\ar[rr]^-{\Cl(h_1\circ g_1)\times\Cl(h_2\circ g_2)}_-\cong
\ar[rd]_>>>>>>>{\Cl(g_1)+\Cl(g_2)\ \ }^-\cong
&&
\Cl(A_t(Y)_f)\times\Cl(K_t(Y))\\
&\Cl(A_t(Y))
\ar[ru]_<<<<<<<{\ \ (\Cl(h_1),\Cl(h_2))}^-\cong
\\
\s_0(A)\times\s_0(D_t(Y))\ar@{^(->}[uu]
\ar '[r][rr]^<{\s_0(h_1\circ g_1)\times\s_0(h_2\circ g_2)}
\ar[rd]_-{\s_0(g_1)\otimes\s_0(g_2)\ \ }
&&
\s_0(A_t(Y)_f)\times\s_0(K_t(Y))\ar@{^(->}[uu]\\
&\s_0(A_t(Y))\ar@{^(->}[uu]
\ar[ru]_-{\ \ \ (\s_0(h_1),\s_0(h_2))}
}
\end{split}
\end{equation}
Furthermore, we established that the three maps in the upper triagram are isomorphisms. 
The map $\s_0(g_1)\otimes\s_0(g_2)$ is defined as $([C_1],[C_2])\mapsto[C_1\otimes_DC_2]$;
the well definedness of this map is given in Example~\ref{ex181224c}.
The three quadrilateral faces of this diagram commute by~\cite[Theorem~4.4]{SWSeSpG}.
Since the vertical maps are one-to-one, a routine diagram chase shows that the lower triagram also commutes
and that each map in the lower triagram is one-to-one, hence so is the map $\s_0(h_2\circ g_2)$.
Also, the map $\s_0(h_1\circ g_1)\colon \s_0(A)\to\s_0(A[B]_F)\approx \s_0(A_t(Y)_f)$
is a bijection by~\cite[Corollary~3.11(b)]{sather:divisor}, via a diagram as in~\eqref{diag190402c}.

We claim that each of the conditions \eqref{thm181226a1} and \eqref{thm181226a2} implies that  $\s_0(h_2\circ g_2)$ is bijective.

\eqref{thm181226a1} Assume that $A$ contains a field, so $D$ contains a field $\sfk$. Then the map $\sfk_t(Y)\to K_t(Y)$ is faithfully flat
and induces an isomorphism on divisor class groups, hence, the induced map $\s_0(\sfk_t(Y))\to \s_0(K_t(Y))$
is a bijection by~\cite[Corollary~3.11(b)]{sather:divisor}. This map factors through the map $\s_0(h_2\circ g_2)\colon \s_0(D_t(Y))\to \s_0(K_t(Y))$,
therefore $\s_0(h_2\circ g_2)$ is surjective; since injectivity was already established, the argument in this case is complete.

\eqref{thm181226a2} Assume now that the natural map $\s_0(D_t(Y))\to\s_0(L_t(Y))$ is surjective.
Since this map is also injective, it is bijective.
As already observed, the map $\s_0(L_t(Y))\to\s_0(K_t(Y))$ is bijective,
hence so is the composition $\s_0(D_t(Y))\to\s_0(K_t(Y))$.

Now we complete the proof using the bottom triagram of~\eqref{diag181226a}.  Each map in this triagram is one-to-one. The claim and the paragraph preceding it show that the 
horizontal map in the triagram is a bijection. It follows that the diagonal map $(\s_0(h_1),\s_0(h_2))$ is also onto, hence a bijection. Thus, all the maps in this triagram are bijections, as desired.
\end{proof}

\begin{cor}\label{cor190808a}
Let $Y_1,\ldots,Y_s$ be the $t$-components of $Y$. 
The following natural map is bijective
$$\s_0(\sfk_t(Y_1))\times\cdots\times\s_0(\sfk_t(Y_s))\xra\iota\s_0(\sfk_t(Y_1)\otimes_\sfk\cdots\otimes_\sfk\sfk_t(Y_s))\approx\s_0(\sfk_t(Y)).$$
\end{cor}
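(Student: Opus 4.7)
The plan is to induct on the number $s$ of $t$-components of $Y$. The base case $s=1$ is trivial, as $\iota$ is then the identity on $\s_0(\sfk_t(Y_1))$. For the inductive step, I would peel off the first $t$-component and reduce to Theorem~\ref{thm181226a}.

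Set $Y' = Y_2\cup\cdots\cup Y_s$ and $A := \sfk_t(Y')$, which is a normal domain containing the field $\sfk$. Since the variables indexing distinct $t$-components of $Y$ are disjoint and $I_t(Y) = I_t(Y_1)+I_t(Y')$ (every $t$-minor of $Y$ lies in a single $t$-component), one obtains natural $\sfk$-algebra isomorphisms
$$\sfk_t(Y)\cong\sfk_t(Y_1)\otimes_\sfk\sfk_t(Y')\cong A_t(Y_1),$$
where $Y_1$ is viewed as a ladder over $A$. Since $A$ contains the field $\sfk$, condition~\eqref{thm181226a1} of Theorem~\ref{thm181226a} holds, using the PID subring $D=\sfk$ so that $D_t(Y_1)=\sfk_t(Y_1)$. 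The theorem then delivers a relation-respecting bijection
$$\s_0(A)\times\s_0(\sfk_t(Y_1))\xra{\cong}\s_0(A_t(Y_1))\approx\s_0(\sfk_t(Y)),$$
sending $([C],[C_1])\mapsto[C\otimes_\sfk C_1]$, as stated explicitly in the proof of Theorem~\ref{thm181226a}.

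Next I would apply the induction hypothesis to $Y'$, which has $t$-components $Y_2,\ldots,Y_s$, to obtain a relation-respecting bijection
$$\s_0(\sfk_t(Y_2))\times\cdots\times\s_0(\sfk_t(Y_s))\xra{\cong}\s_0(\sfk_t(Y_2)\otimes_\sfk\cdots\otimes_\sfk\sfk_t(Y_s))\approx\s_0(A)$$
given by $([C_2],\ldots,[C_s])\mapsto[C_2\otimes_\sfk\cdots\otimes_\sfk C_s]$. Pairing this bijection with the identity on $\s_0(\sfk_t(Y_1))$ and substituting into the previous display produces the composite bijection
$$([C_1],\ldots,[C_s])\ \mapsto\ [C_1\otimes_\sfk(C_2\otimes_\sfk\cdots\otimes_\sfk C_s)],$$
which equals $\iota$ after invoking associativity of $\otimes_\sfk$.

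The main point of friction I anticipate is verifying that the composite of the bijection from Theorem~\ref{thm181226a} with the inductively supplied bijection really is $\iota$; this reduces to the associativity of the $\sfk$-tensor product together with the explicit ``tensor over $D$'' description of the Theorem~\ref{thm181226a} map. The perfectly relation-respecting property ($\approx$) carries through the induction automatically because both ingredient maps respect the total-reflexivity relation $\tri$ introduced in Example~\ref{ex181224c}, and compositions and products of such maps preserve this property.
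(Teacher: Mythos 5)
Your proposal is correct and follows essentially the same route as the paper: induct on $s$, peel off one $t$-component (you remove $Y_1$, the paper removes $Y_s$ --- an immaterial difference), identify $\sfk_t(Y)$ with $A_t(Y_1)$ for $A=\sfk_t(Y')$, and invoke Theorem~\ref{thm181226a} under condition~(1) with $D=\sfk$ together with the inductive hypothesis and the tensor description of the maps from Example~\ref{ex181224c}.
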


\begin{proof}
Example~\ref{ex181224c} shows that $\iota$ is well-defined and injective, via a standard induction argument. 
For surjectivity, we argue by induction on $s\geq 1$, with the base case $s=1$ being trivial. 
Inductively, assume $s>1$ and set $\wti Y=Y\ssm Y_s$. 
Then with $A=\sfk_t(\wti Y)$ and $D=\sfk=L$, one has 
$$A_t(Y_s)\cong A\otimes_\sfk\sfk_t(Y_s)\cong \sfk_t(\wti Y)\otimes_\sfk\sfk_t(Y_s)\cong\sfk_t(Y).$$
Thus, Theorem~\ref{thm181226a}\eqref{thm181226a1} implies that the  map $\iota'$ in the next commutative diagram is a bijection,
while $\wti\iota$ is a bijection by our induction assumption.
$$\xymatrix@C=15mm{
\s_0(\sfk_t(Y_1))\times\cdots\times\s_0(\sfk_t(Y_{s-1}))\times\s_0(\sfk_t(Y_s))\ar[r]^-{\wti\iota\times\s_0(\sfk_t(Y_s))}_-\approx\ar[rd]_-\iota
&\s_0(\sfk_t(\wti Y))\times\s_0(\sfk_t(Y_s))\ar[d]^{\iota'}_\approx\\
&\s_0(\sfk_t(Y))}$$
It follows that $\iota$ is a bijection, as desired.
\end{proof}

\begin{lem} \label{lem:xisprime}
Let $B$ be an integral domain.
If $t>2$, then the element $x_{S_i}$ in $B_t(Y)$ is prime for all $i=1,\dots,h+1$.
\end{lem}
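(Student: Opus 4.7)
The plan is to show that $B_t(Y)/(x_{S_i})$ is an integral domain when $t > 2$. First, I would reduce to the case $B = \sfk$ is a field: since $B_t(Y) \cong B \otimes_{\bbz} \bbz_t(Y)$ is $B$-flat (in fact a free $B$-module via a standard straightening-law basis), it embeds into $K_t(Y)$ with $K = \mathrm{Frac}(B)$, and primality will transfer once it is established over $K$. Decomposing $Y$ into its $t$-components $Y_1, \ldots, Y_s$ gives $\sfk_t(Y) \cong \sfk_t(Y_1) \otimes_{\sfk} \cdots \otimes_{\sfk} \sfk_t(Y_s)$; since $x_{S_i}$ lies in one factor and the other factors are geometrically integral $\sfk$-algebras, I may further assume $Y$ is $t$-connected and $S_i = (r,c)$ is a specific lower outside corner.

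Next I would use the matrix-factorization parameterization of the ladder determinantal variety. In the rectangular case, $\operatorname{Spec}\sfk_t(Y)$ is the variety of rank-$\leq(t-1)$ matrices $M = UV$ with $U$ of size $m \times (t-1)$ and $V$ of size $(t-1) \times n$; for ladders there is an analogous ladder-adapted factorization. The condition $X_{S_i} = 0$ at $S_i = (r,c)$ pulls back to the bilinear equation $\sum_{k=1}^{t-1} U_{rk} V_{kc} = 0$ in the parameter space. For $t \geq 3$ this is a non-degenerate bilinear form in two disjoint sets of $t-1 \geq 2$ variables, hence an irreducible polynomial; its vanishing locus is an irreducible variety, and so is its image, which is $\operatorname{Spec}\sfk_t(Y)/(x_{S_i})$.

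Finally, to upgrade set-theoretic irreducibility to primality of the ideal, I would invoke Cohen--Macaulayness of $\sfk_t(Y)$ (a classical result of Herzog--Trung and Conca). Since $x_{S_i}$ is a nonzero element of the domain $\sfk_t(Y)$, it is a non-zero-divisor, so $\sfk_t(Y)/(x_{S_i})$ is Cohen--Macaulay. Being generically reduced (from the parameterization, whose map is generically smooth onto its image) and Cohen--Macaulay, it is reduced; combined with irreducibility, it is an integral domain, as desired.

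The main obstacle is verifying the irreducibility of the bilinear equation for $t > 2$; this is exactly where the hypothesis is essential, since for $t = 2$ the equation degenerates to $U_{r1} V_{1c} = 0$, a product of two linear forms, giving a reducible variety and indeed causing the statement itself to fail for $t = 2$. A secondary technical point is making the parameterization explicit for non-rectangular ladders, which should follow by standard ladder combinatorics or by induction on the number of lower outside corners, using the structure of $Y$ built up in the appendix.
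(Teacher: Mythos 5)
Your reduction steps (pass to the fraction field by flatness, then to a $t$-connected ladder by the tensor decomposition into $t$-components) are exactly the paper's Cases 2 and 3, and they are fine. The core of your argument, however, diverges from the paper's and has two genuine gaps. First, the ladder-adapted matrix factorization $M=UV$ is the load-bearing step of your proof for non-rectangular $Y$, and it is not a standard fact that "follows by ladder combinatorics": for a general two-sided ladder there is no straightforward parameterization of $\operatorname{Spec}\sfk_t(Y)$ as the image of a product of affine spaces of the form you describe, and without it you have neither the irreducibility of $V(x_{S_i})$ nor the bilinear equation whose irreducibility you analyze. Second, even granting irreducibility, "generically smooth onto its image, hence generically reduced" is not justified (and is characteristic-sensitive); since $\sfk_t(Y)$ is normal, what you actually need is that $x_{S_i}$ vanishes to order exactly $1$ along the unique minimal prime of $(x_{S_i})$, i.e., $\dvsr(x_{S_i})$ is a reduced divisor, and your proposal gives no argument for this. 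The map $(U,V)\mapsto UV$ is not flat, so reducedness does not descend for free; one would have to restrict to the rank-exactly-$(t-1)$ locus and check that the generic point of $V(x_{S_i})$ lies there, none of which you do.

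The paper's argument is shorter and sidesteps both issues. By Fact~\ref{fact190701a} the minimal primes of $f$ in $\sfk_t(Y)$ are the $\q_i$, $\q_i'$, and the nonzero $\p_j$, all generated by minors of size $t-1>1$; since $x_{S_\ell}$ has degree $1$, it lies in none of them, so $f,x$ is a regular sequence in the Cohen--Macaulay domain $\sfk_t(Y)$. Hence $f$ is a nonzerodivisor on $\sfk_t(Y)/(x_{S_i})$, which therefore embeds into $\sfk_t(Y)_f/(x_{S_i})\cong\sfk[B]_F/(X_{S_i})$ by Lemma~\ref{lem190329b}; the latter is a localization of a polynomial ring modulo one of its variables, visibly a domain. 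Note that this is also where $t>2$ enters the paper's proof (for $t=2$ the ideals $\q_i$ are generated by variables and may contain $x_{S_i}$), which is a different mechanism from your observation that the quadric $\sum_k U_{rk}V_{kc}$ degenerates when $t=2$. To repair your proposal you would essentially have to prove the parameterization and the order-of-vanishing claim, each of which is harder than the lemma itself; I recommend adopting the regular-sequence argument instead.
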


\begin{proof}
We argue by cases.

Case 1: $B=\sfk$ and the ladder $Y$ is $t$-connected. In this case,
we argue as in the proof in \cite[Proposition 4.8]{Co}. By Fact~\ref{fact190701a}, the minimal prime ideals of $f$ are $\q_1,\dots,\q_{h+1},\q'_1,\dots,
  \q'_{h+1}$, and $\p_j$ when $T'_j$ has type 1. The ring $\sfk_t(Y)$ is a Cohen-Macaulay domain
  and $x_{S_{\ell}} \notin \q_i,\q'_i,\p_j$ for all $1\leq \ell \leq h+1$,
  $1\leq i\leq h+1$ and $1\leq j\leq k$. (Recall that $x_{S_{\ell}}$ is a single variable
  and the prime ideals are defined by minors of size $t-1 > 1$.)  Thus, $f,x$ is a regular sequence in
  $\sfk_t(Y)$. By Lemma~\ref{lem190329b}, the element $x_{S_i}$ is prime in
  $\sfk_t(Y)_f$. Hence $x_{S_i}$ is prime in $\sfk_t(Y)$.

Case 2: $Y$ is $t$-connected.
Let $G$ be the field of fractions of $B$. By flatness, the quotient $B_t(Y)/(x_{S_i})$ is a subring of $G_t(Y)/(x_{S_i})$ which is an integral domain by Case 1.
Hence, the subring $B_t(Y)/(x_{S_i})$ is an integral domain as well. Hence $x_{S_i}$ is prime in $B_t(Y)$ in this case.

Case 3: $Y$ is $t$-disconnected. In this case, let $Y_1,\ldots,Y_s$ be the $t$-components of $Y$.
Our assumption implies that $s>1$. 
Re-order the $Y_j$'s if necessary to assume that   $x_{S_i}\in Y_s$.
Set $\wti Y=Y\ssm Y_s$ and $\wti B=B_t(\wti Y)$.
We have $B_t(Y)\cong \wti B_t(Y_s)$, and hence $B_t(Y)/(x_{S_i})\cong \wti B_t(Y_s)/(x_{S_i})$, which is a domain by Case 2.
\end{proof}

We can now use Proposition~\ref{prop:invertx} to give a new proof of
\cite[Theorem~5.1]{MR1413891}, where we call attention to the corrected
values of $\gl_1$ and $\gl_{h+1}$ in \cite[Theorem 5.1]{MR1413891}.  (See \cite[Example 7.3]{MR1413891}, specifically, $\sfk_3(Y_1)$, which is Gorenstein.)

\begin{cor} \label{thm:classgroup}
Assume that $Y$ is $t$-connected and satisfies Assumption~(d).  Let $[\omega]$ be the canonical class of $\sfk_t(Y)$, and let
  \[
    [\omega] = \sum_{i=1}^{h+1} \lambda_i [\q_i] + \sum_{j=1}^k \delta_j [\p_j]
  \]
  be the unique representation of $[\omega]$ with respect to the basis of
  $\Cl(\sfk_t(Y))$. Let $i_j=\min\{i\colon 1 \leq i \leq h+1, a_i+t-2>c_j\}$. If $h>0$, then:
  \begin{alignat*}{2}
    \gl_1 &= (a_1+b_1)-(a_0+b_0)+(t-2),\\
    \gl_i &= (a_i+b_i)-(a_{i-1}+b_{i-1})
    &\quad &\text{for } 1 < i \leq h,\\
    \gl_{h+1} &= (a_{h+1}+b_{h+1})-(a_h+b_h)-(t-2) && \text{and}\\
    \delta_j &= (a_{i_j}+b_{i_j}+2(t-2))-(c_j+d_j)
    && \text{for } 1 \leq j \leq k.
  \end{alignat*}
  If $h=0$, then $\gl_1=(a_1+b_1)-(a_0+b_0)$.
\end{cor}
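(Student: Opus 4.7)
The plan is to transfer the computation of $[\omega]$ to the localization $\sfk_t(Y)_x$, where $x = x_{S_1}\cdots x_{S_{h+1}}$, and to exploit Proposition~\ref{prop:invertx}. By Lemma~\ref{lem:xisprime}, each $x_{S_i}$ is prime in $\sfk_t(Y)$, so every principal class $[(x_{S_i})]$ vanishes in $\Cl(\sfk_t(Y))$. Expanding each such zero class with respect to the basis from Fact~\ref{fact190701a}---by computing the DVR valuation of $x_{S_i}$ at each $\q_k$, $\q_k'$, and $\p_j$---yields explicit linear relations among the basis elements that encode the shape data $a_i, b_i, c_j, d_j$. These relations form the combinatorial backbone of the coefficient determination.

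Primality of the $x_{S_i}$ also implies, via Nagata's theorem, that $\Cl(\sfk_t(Y)) \to \Cl(\sfk_t(Y)_x)$ is an isomorphism, and the canonical class is preserved under this flat localization. Proposition~\ref{prop:invertx} is then invoked to identify $\sfk_t(Y)_x$ with a more tractable ring $R'$, which should be a polynomial extension or a localization of a $\sfk_{t-1}(Z)$-type ring with $Z = Y \setminus B_1$. The canonical class of $R'$ can then be computed either by induction on $t$ (with the classical $t = 2$ case as the base, where Fact~\ref{fact190701a} and standard Cohen--Macaulay dualities give the answer directly) or, if $R'$ turns out to be Gorenstein or to have trivial class group, by an immediate argument. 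Pulling this back along the isomorphism produces $[\omega]$ as an element of $\Cl(\sfk_t(Y))$.

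The final step is to expand the resulting class in the basis $\{[\q_i], [\p_j]\}$ using the relations derived in the first step. The coefficients $\lambda_i$ emerge as the telescoping differences $(a_i+b_i)-(a_{i-1}+b_{i-1})$ between adjacent lower corners, with boundary contributions $\pm(t-2)$ at $i=1$ and $i=h+1$ reflecting the transition from $t$-minors on $Y$ to $(t-1)$-minors on $Z$. The $\delta_j$ coefficient encodes the distance from the upper inside corner $T_j'$ to the lower outside corner $T_{i_j}$ singled out by $i_j = \min\{i : a_i + t - 2 > c_j\}$, which is precisely the first $i$ for which the $(t-2)$-shifted column of $T_i$ reaches past $T_j'$. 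The case $h = 0$, where the $\pm(t-2)$ corrections collapse, follows the same recipe but with only one lower outside corner to track.

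The main obstacle is the combinatorial matching in the last step: verifying that the numerology of corners, the $(t-2)$ adjustments, and the selection of $i_j$ all fall out with the correct signs and indices. This will require a careful use of Conca's Assumption~(d) to control how the corner types of $Y$ pass to $Z$ under the reduction, together with a clean separation of boundary contributions (responsible for the $\pm(t-2)$ in $\lambda_1$ and $\lambda_{h+1}$) from interior contributions (responsible for the pure differences in the intermediate $\lambda_i$ and the shift in each $\delta_j$).
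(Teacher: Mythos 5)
Your overall architecture is the paper's: induct on $t$ with base case $t=2$ from Conca, use Lemma~\ref{lem:xisprime} and Nagata's theorem to get $\Cl(\sfk_t(Y))\cong\Cl(\sfk_t(Y)_x)$ (with canonical classes preserved under the flat localization), identify $\sfk_t(Y)_x$ with $\sfk_{t-1}(Z)[B_1]_{\mfx}$ via Proposition~\ref{prop:invertx}, and pull the answer back. The ring $R'$ you hedge about is exactly $\sfk_{t-1}(Z)[B_1]_{\mfx}$; it is neither Gorenstein in general nor of trivial class group, so the inductive branch of your alternative is the one that must be taken.

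The genuine problem is the mechanism you propose for the final coefficient-matching. You plan to expand each zero class $[(x_{S_i})]$ in the basis by computing DVR valuations of $x_{S_i}$ at the $\q_k$, $\q_k'$, $\p_j$, and to use the resulting ``linear relations'' as the combinatorial backbone. But Lemma~\ref{lem:xisprime} says $(x_{S_i})$ is itself a height-one prime, so $\operatorname{div}(x_{S_i})$ is the single prime divisor $(x_{S_i})$ with multiplicity one: the relation $[(x_{S_i})]=0$ records only that this prime is principal and yields no relation whatsoever among the basis classes $[\q_i]$, $[\p_j]$ (which are linearly independent in the free group $\Cl(\sfk_t(Y))$). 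So the ``relations derived in the first step'' are vacuous, and the final step as described has nothing to work with. What actually closes the argument in the paper is Corollary~\ref{cor:correspondence}: the isomorphism $\ol\psi$ carries $[\q_i(Y)]$ to $[\q_i(Z)]$ and $[\p_j(Y)]$ to $[\p_j(Z)]$, so (using Assumption~(d) to guarantee that $Z$ is $(t-1)$-connected, satisfies Assumption~(d), and has basis matching that of $Y$) the expansion of $[\omega_{\sfk_t(Y)}]$ is read off from that of $[\omega_{\sfk_{t-1}(Z)}]$. One then checks that the corners of $Z$ are $(a_0+1,b_0)$, $(a_{h+1},b_{h+1}+1)$, and $(a_i+1,b_i+1)$ for $1\le i\le h$, which makes the displayed formulas invariant under the simultaneous replacement $Y\rightsquigarrow Z$, $t\rightsquigarrow t-1$. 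Your closing paragraph gestures at this corner bookkeeping, but without the prime-ideal correspondence of Corollary~\ref{cor:correspondence} there is no way to transport the basis through the isomorphism, and the valuation computation you substitute for it cannot supply that information.
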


\begin{proof}
  We argue as in the proof of~\cite[Theorem 4.9 (b)]{Co} by induction on $t$.
  The case $t=2$ is given by \cite[Corollary 2.3 and Proposition 2.4]{Co}.
  For $t>2$, the ideal $(x_{S_i})$ is prime for all $i=1,\dots,h+1$ by Lemma~\ref{lem:xisprime}; recall that $x = x_{S_1}x_{S_2}\cdots x_{S_{h+1}}$.  
  In the following commutative diagram, the first and third isomorphisms are from Nagata's Theorem~\cite[Corollary~7.2]{fossum:dcgkd},
and the second isomorphism is from
Proposition~\ref{prop:invertx}.
$$\xymatrix{
\Cl(\sfk_t(Y))\ar[r]^-\cong
&\Cl(\sfk_t(Y)_x)\ar[r]^-\cong_-{\ol\psi}
&\Cl(\sfk_{t-1}(Z)[B_1]_{\mfx})
&\Cl(\sfk_{t-1}(Z))\ar[l]_-\cong
}$$
The third isomorphism is induced by the natural flat homomorphism $\sfk_{t-1}(Z)\to\sfk_{t-1}(Z)[B_1]_{\mfx}$, thus, it respects canonical classes;
and similarly for the other two isomorphisms.
  Corollary~\ref{cor:correspondence} 
  implies that we have $\ol{\psi}([(\q_i(Y)]))=[(\q_i(Z))]$ and $\ol{\psi}([(\p_j(Y)]))=[(\p_j(Z))]$, hence,
one can read the coefficients for $[\omega]$ from the corresponding coefficients for $[\omega_{\sfk_{t-1}(Z)}]$;
as we note in Remark~\ref{disc190808a}, 
since $Y$ satisfies Assumption~(d), the ladder $Z$ is $(t-1)$-connected and satisfies Assumption~(d) with respect to $t-1$,
therefore, the natural basis of $\Cl(\sfk_{t-1}(Z))$ corresponds exactly to the natural basis of $\Cl(\sfk_t(Y))$.

Note that if $Y$ has
  corners $(a_i,b_i)$ with $0 \leq i \leq h+1$, then $Z$ has corners $(a_0+1,b_0)$,
  $(a_{h+1},b_{h+1}+1)$, and $(a_i+1,b_i+1)$ for $1 \leq i \leq h$
  (without relabeling the variables in $Z$). 
  From this, it is straightforward to check that the $\gl$'s and $\gd$'s coming from $Z$ are the same as the ones coming from $Y$.
Thus, the desired result follows by induction.
\end{proof}

Next, we describe all the semidualizing modules of the rings from Corollary~\ref{thm:classgroup}.

\begin{thm} \label{tbyttrivial}
Assume that $Y$ is $t$-connected and satisfies Assumption (d).
  Then $\sfk_t(Y)$ has only trivial semidualizing modules.
\end{thm}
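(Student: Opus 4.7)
The plan is to proceed by induction on $t \geq 2$, mirroring the inductive structure already used in the proof of Corollary~\ref{thm:classgroup}. The base case $t = 2$ would be handled by appealing to the previously known classification for classical ladder rings under Assumption~(d) (cf.~\cite{MR1413891, SWSeSpP1, SWSeSpP2}), where $\sfk_2(Y)$ is already known to have only trivial semidualizing modules.

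For the inductive step, I would assemble the same chain of flat maps of normal Cohen-Macaulay domains that powered Corollary~\ref{thm:classgroup}:
$$\sfk_{t-1}(Z) \xra{\iota_1} \sfk_{t-1}(Z)[B_1]_{\mfx} \xla[\cong]{\psi} \sfk_t(Y)_x \xla{\iota_2} \sfk_t(Y),$$
where $\iota_2$ is the localization inverting $x = x_{S_1}\cdots x_{S_{h+1}}$ (a product of distinct primes by Lemma~\ref{lem:xisprime}), $\psi$ is the isomorphism from Proposition~\ref{prop:invertx}, and $\iota_1$ is the composition of a polynomial extension with a localization. Each of $\iota_1$ and $\iota_2$ induces an isomorphism on divisor class groups (by Nagata's theorem for the localizations and by standard behavior of $\Cl$ under polynomial extensions), as recorded in the proof of Corollary~\ref{thm:classgroup}.

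Next, I would invoke~\cite[Corollary~3.11(b)]{sather:divisor}: for a flat map of normal Cohen-Macaulay domains that induces an isomorphism on divisor class groups, the induced map on $\s_0$ is a bijection. Applying this to each arrow above yields a bijection $\s_0(\sfk_{t-1}(Z)) \approx \s_0(\sfk_t(Y))$. As observed in the proof of Corollary~\ref{thm:classgroup}, the assumption that $Y$ is $t$-connected and satisfies Assumption~(d) passes to the ladder $Z$, which is $(t-1)$-connected and satisfies Assumption~(d) with respect to $t-1$, so the inductive hypothesis applies: $\s_0(\sfk_{t-1}(Z)) = \{[\sfk_{t-1}(Z)],[\omega_{\sfk_{t-1}(Z)}]\}$. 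Since each map in the chain is flat between Cohen-Macaulay rings and hence compatible with both the free class and the canonical class, the bijection transports these two trivial classes to $[\sfk_t(Y)]$ and $[\omega_{\sfk_t(Y)}]$, yielding the claim.

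The main obstacle I anticipate is verifying cleanly that the bijection on $\s_0$ sends free to free and canonical to canonical along the chain. The former is transparent since flat base change of the rank-one free module is free. The latter is the genuinely substantive point: it reduces to the fact that formation of the canonical module commutes with our flat extensions (polynomial extensions and localizations of Cohen-Macaulay normal domains), so under the $\Cl$-isomorphism the canonical class is tracked faithfully. A secondary issue is simply pinning down the $t = 2$ base case via an explicit citation, but this lies outside the inductive machinery and should follow from the preexisting literature.
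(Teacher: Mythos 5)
Your overall strategy coincides with the paper's: induct on $t$, with base case $t=2$ (the paper cites \cite[Theorem~3.10]{SWSeSpP1}), and in the inductive step run the chain $\sfk_t(Y)\to\sfk_t(Y)_x\cong\sfk_{t-1}(Z)[B_1]_{\mfx}\leftarrow\sfk_{t-1}(Z)$. However, there is a genuine gap at the first arrow. You apply \cite[Corollary~3.11(b)]{sather:divisor} uniformly to ``each arrow,'' including the localization $\sfk_t(Y)\to\sfk_t(Y)_x$. That result does not cover this map: everywhere the paper invokes it, the homomorphism is faithfully flat (e.g.\ $\sfk_{t-1}(Z)\to\sfk_{t-1}(Z)[B_1]_{\mfx}$, $A\to A[B]_F$, $\sfk_t(Y)\to K_t(Y)$), whereas inverting $x$ is flat but not faithfully flat. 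Concretely, although $\Cl(\sfk_t(Y))\to\Cl(\sfk_t(Y)_x)$ is an isomorphism (Nagata's theorem plus Lemma~\ref{lem:xisprime}), so every semidualizing $\sfk_t(Y)_x$-module is $B_x$ for some rank-one reflexive $\sfk_t(Y)$-module $B$, nothing formal guarantees that $B$ itself is semidualizing: one only knows that $\ext^i_{\sfk_t(Y)}(B,B)$ becomes zero after inverting $x$, i.e.\ is supported on $V(x)$, for $i\geq 1$. Surjectivity of $\s_0(\sfk_t(Y))\to\s_0(\sfk_t(Y)_x)$ is precisely the substantive point, and your argument assumes it.

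The repair is the one the paper uses, and it is available to you. Injectivity of $\rho\colon\s_0(\sfk_t(Y))\to\s_0(\sfk_t(Y)_x)$ follows from a diagram chase using the inclusion $\s_0\subseteq\Cl$ (Fact~\ref{lem190103a}) together with the isomorphism on class groups. Then, rather than descending semidualizing modules, use the induction hypothesis in the other direction: $\s_0(\sfk_{t-1}(Z))$, hence $\s_0(\sfk_t(Y)_x)$, consists of at most the two trivial classes, and both of these are visibly in the image of $\rho$ (the free class maps to the free class, and the canonical module localizes to the canonical module). Injectivity then forces $\s_0(\sfk_t(Y))$ to consist of exactly the trivial classes. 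Note also that the point you flagged as the ``genuinely substantive'' obstacle---that canonical classes are preserved along the flat maps in the chain---is standard for Cohen--Macaulay normal domains, while the step you treated as routine, bijectivity of $\s_0$ along the localization at $x$, is the one that actually requires the argument.
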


\begin{proof}
  The proof is similar to that of Corollary~\ref{thm:classgroup} by induction on $t$. The
  case $t=2$ is given by \cite[Theorem 3.10]{SWSeSpP1}. 
  For the induction step we augment the diagram from the proof of Corollary~\ref{thm:classgroup}.
$$\xymatrix{
\Cl(\sfk_t(Y))\ar[r]^-\cong
&\Cl(\sfk_t(Y)_x)\ar[r]^-\cong
&\Cl(\sfk_{t-1}(Z)[B_1]_{\mfx})
&\Cl(\sfk_{t-1}(Z))\ar[l]_-\cong
\\
\s_0(\sfk_t(Y))\ar[r]^-\rho\ar@{^(->}[u]
&\s_0(\sfk_t(Y)_x)\ar[r]^-\approx\ar@{^(->}[u]
&\s_0(\sfk_{t-1}(Z)[B_1]_{\mfx})\ar@{^(->}[u]
&\s_0(\sfk_{t-1}(Z))\ar[l]_-\approx\ar@{^(->}[u]
}$$
The vertical maps are the natural inclusions.
The second horizontal map on the bottom row is a bijection because it is induced by the ring isomorphism 
$\sfk_t(Y)_x\xra\cong\sfk_{t-1}(Z)[B_1]_\mfx$.
  The third horizontal map on the bottom row is a bijection by~\cite[Corollary~3.11(b)]{sather:divisor}.
A diagram chase shows that the map $\rho$ is injective. To show that it is surjective,
note that our induction hypothesis implies that $\sfk_{t-1}(Z)$ has only trivial semidualizing modules, hence
so does $\sfk_t(Y)_x$. It is straightforward to show that the two trivial semidualizing modules (free and dualizing) are in the image
of  $\rho$, that is, that $\rho$ is also surjective, as desired.
\end{proof}

\begin{defn} \label{type1.1}
Let $Y$ be $t$-connected (noting that Assumption~\ref{ass190807c} is still active). Recalling that an upper inside corner $T'_j=(c_j,d_j)$ is of type 1 if the $(t-1)$-minor based on $T'_j$
  contains at most one point in $B_1$, we say that 
  $T'_j=(c_j,d_j)$ is of \textbf{type~1.1} if the $(t-1)$-minor based on $T'_j$
  contains at least one point in $B_1$.
  \end{defn}

\begin{disc}\label{disc190808c}
With $Y$ as in the definition,
let $k^\bullet$ be the number of upper inside corners of $Y$ with type 1.1.
Conca~\cite[p.~458]{MR1413891} 
shows that
there are sub-ladders $Y_0,\ldots,Y_{k^\bullet}$ of $Y$ such that 
$\sfk_t(Y)=\Lambda^\sfk/(\ull)$ where $\Lambda^\sfk=\sfk_t(Y_0)\otimes_\sfk\cdots\otimes_\sfk\sfk_t(Y_{k^\bullet})$ and $\ull=\ell_1,\ldots,\ell_s$
is a sequence of linear 
forms 
that is $\Lambda^\sfk$-regular.
Moreover, each linear form $\ell_i$ is a difference of variables; the point is that the construction takes disjoint ladders and glues them together
by identifying certain entries. 
In particular, the $\ell_i$ are independent of the coefficient ring.
Furthermore, each sub-ladder $Y_i$ has no upper inside corners of type 1.1, i.e., $Y_i$ satisfies Assumption~(d).
In the case $t=2$, we have $Y=Y_0\#\cdots\# Y_{k^\bullet}$
as in~\cite[Notation~3.1]{SWSeSpP2}.

Corollary~\ref{cor190808a} implies that the map
$$\iota\colon\s_0(\sfk_t(Y_0))\times\cdots\times\s_0(\sfk_t(Y_{k^\bullet}))\to\s_0(\Lambda^\sfk)$$ 
given by
$([C_0],\ldots,[C_{k^\bullet}])\mapsto[C_0\otimes_\sfk\cdots\otimes_\sfk C_{k^\bullet}]$ is well-defined and bijective; and~\cite[Proposition~2.1(6)]{SWSeSpP2} implies that the base-change map $\s_0(\Lambda^\sfk)\to\s_0(\Lambda^\sfk/(\ull))$  is well-defined and injective.
One point of the next result is that the base-change map is bijective as well. 
Another point is that one can explicitly describe the image of this map, as follows.

Theorem~\ref{tbyttrivial} implies that each ring $\sfk_t(Y_i)$ has only the trivial semidualizing modules $\sfk_t(Y_i)$ and $\omega_{\sfk_t(Y_i)}$.
The natural map $\sfk_t(Y_i)\to \Lambda^\sfk$ is flat; in fact, free. 
We abuse notation and let $[\omega_{\sfk_t(Y_i)}]\in\Cl(\Lambda^\sfk)$
denote the image of $[\omega_{\sfk_t(Y_i)}]\in\Cl(\sfk_t(Y_i))$ under the induced map $\Cl(\sfk_t(Y_i))\to \Cl(\Lambda^\sfk)$, 
and we let $[\omega_{\sfk_t(Y_i)}]\in\Cl(\Lambda^\sfk/(\ull))$
denote the image of $[\omega_{\sfk_t(Y_i)}]\in\Cl(\Lambda^\sfk))$ under the induced map $\Cl(\Lambda^\sfk)\to\Cl(\Lambda^\sfk/(\ull))$. 
The preceding paragraph with the triviality of $\s_0(\sfk_t(Y_i))$
implies that every semidualizing $\Lambda^\sfk$-module is of the form $C\cong C_0\otimes_\sfk\cdots\otimes_\sfk C_{k^\bullet}$ where
for each $i$ we have $C_i\cong\sfk_t(Y_i)$ or $C_i\cong\omega_{\sfk_t(Y_i)}$. 
Set $\theta_i=0$ if $C_i\cong\sfk_t(Y_i)$ and $\theta_i=1$ otherwise.
Then in $\Cl(\Lambda^\sfk)$, we have 
$[C]=\sum_{i=1}^{k^\bullet}\theta_i[\omega_{\sfk_t(Y_i)}]$,
and the image of $[C]$ in $\Cl(\Lambda^\sfk/(\ull))=\Cl(\sfk_t(Y))$ is $\sum_{i=1}^{k^\bullet}\theta_i[\omega_{\sfk_t(Y_i)}]$ as well.
\end{disc}

A criterion for the Gorensteinness of $\sfk_t(Y_j)$ in the next result is in~\cite[Theorem~5.2]{MR1413891}.

\begin{thm} \label{sdmstbyt}
Assume that $Y$ is $t$-connected, and use the notation from Remark~\ref{disc190808c}.
Then
  \[
    \s_0(\sfk_t(Y))=\left\{\,\sum_{j=0}^{k^{\bullet}} \te_j [\omega_{\sfk_t(Y_j)}] \colon \te_j = 0 \text{ or } 1 \,\right\}\approx\prod_{j=0}^{k^{\bullet}}\s_0(\sfk_t(Y_j)).
  \]
  In particular, $|\s_0(\sfk_t(Y))| = |\s_0(\sfk_t(Y_0))| \cdots|\s_0(\sfk_t(Y_{k^{\bullet}}))|
  =2^{\ve_0 + \dots + \ve_{k^{\bullet}}}$, where $\ve_j=0$ if $\sfk_t(Y_j)$ is Gorenstein
  and $\ve_j=1$ otherwise.
\end{thm}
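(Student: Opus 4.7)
The plan is to exploit the decomposition from Remark~\ref{disc190808c}, which writes $\sfk_t(Y)=\Lambda^\sfk/(\ull)$ with $\Lambda^\sfk=\sfk_t(Y_0)\otimes_\sfk\cdots\otimes_\sfk\sfk_t(Y_{k^\bullet})$ and $\ull$ a $\Lambda^\sfk$-regular sequence of differences of variables, where each sub-ladder $Y_j$ is $t$-connected and satisfies Assumption~(d). The strategy is to first describe $\s_0(\Lambda^\sfk)$ via Corollary~\ref{cor190808a} and Theorem~\ref{tbyttrivial}, then transfer the description across the base-change map $\Phi\colon\s_0(\Lambda^\sfk)\to\s_0(\sfk_t(Y))$.

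First I would apply Corollary~\ref{cor190808a} to obtain a bijection $\iota\colon\prod_{j=0}^{k^{\bullet}}\s_0(\sfk_t(Y_j))\to\s_0(\Lambda^\sfk)$ sending $([C_j])_j$ to $[C_0\otimes_\sfk\cdots\otimes_\sfk C_{k^{\bullet}}]$. Since each $Y_j$ satisfies Assumption~(d), Theorem~\ref{tbyttrivial} identifies $\s_0(\sfk_t(Y_j))$ with the trivial set $\{[\sfk_t(Y_j)],[\omega_{\sfk_t(Y_j)}]\}$, of cardinality $2^{\ve_j}$, collapsing to a single point exactly when $\sfk_t(Y_j)$ is Gorenstein. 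The composition $\Phi\circ\iota$ is injective, since $\iota$ is bijective and $\Phi$ is injective by \cite[Proposition~2.1(6)]{SWSeSpP2}, and Remark~\ref{disc190808c} identifies its image as $\{\sum_{j=0}^{k^{\bullet}}\te_j[\omega_{\sfk_t(Y_j)}]\colon\te_j\in\{0,1\}\}$. This yields the inclusion ``$\supseteq$'' of the claimed equality as well as the lower bound $|\s_0(\sfk_t(Y))|\geq 2^{\ve_0+\cdots+\ve_{k^{\bullet}}}$.

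The main obstacle is establishing surjectivity of $\Phi$, i.e., showing that every semidualizing $\sfk_t(Y)$-module lifts through the regular-sequence quotient $\Lambda^\sfk\to\Lambda^\sfk/(\ull)$. The plan is to induct on the length of $\ull$, reducing to a single step: for a ring $R$ arising as a partial quotient of $\Lambda^\sfk$ and an $R$-regular linear form $\ell\in R$ that is a difference of two variables, the base-change map $\s_0(R)\to\s_0(R/(\ell))$ is surjective. For this I would compare divisor class groups via Nagata's theorem in the style of the proof of Theorem~\ref{thm190402a}, identify which classes in $\Cl(R/(\ell))$ correspond to semidualizing modules, and lift rank-one reflexive representatives across $\ell$; the delicate step, which is the core technical hurdle, is to verify that a chosen lift remains semidualizing by controlling the required $\Hom$-identification and $\ext^{\geq 1}$-vanishing through the long exact sequence induced by $0\to R\xra{\ell}R\to R/(\ell)\to 0$. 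Once surjectivity of $\Phi$ is in hand, $\Phi\circ\iota$ is a bijection, which yields the claimed description of $\s_0(\sfk_t(Y))$ together with the cardinality $|\s_0(\sfk_t(Y))|=\prod_{j=0}^{k^{\bullet}}2^{\ve_j}=2^{\ve_0+\cdots+\ve_{k^{\bullet}}}$.
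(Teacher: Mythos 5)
Your first two paragraphs match the paper: the bijection $\iota$ from Corollary~\ref{cor190808a}, the triviality of each $\s_0(\sfk_t(Y_j))$ from Theorem~\ref{tbyttrivial}, and the injectivity of the base-change map $\Phi\colon\s_0(\Lambda^\sfk)\to\s_0(\Lambda^\sfk/(\ull))$ from Remark~\ref{disc190808c} together give exactly the inclusion ``$\supseteq$'' and the lower bound $|\s_0(\sfk_t(Y))|\geq 2^{\ve_0+\cdots+\ve_{k^\bullet}}$. The gap is in your third paragraph, and it sits precisely at the step you yourself flag as the ``core technical hurdle.'' Proving surjectivity of $\Phi$ by lifting semidualizing modules one regular linear form at a time is not a routine reduction: the map $\Lambda^\sfk\to\Lambda^\sfk/(\ull)$ is a non-flat surjection, so Nagata's theorem --- which governs localization --- and the flat-base-change comparisons in the proof of Theorem~\ref{thm190402a} simply do not apply to it; there is no analogous general mechanism relating $\Cl(R)$ to $\Cl(R/(\ell))$. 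Moreover, the long exact sequence coming from $0\to R\xra{\ell}R\to R/(\ell)\to 0$ only lets you check that a module $B$ with $\ell$ $B$-regular and $B/\ell B\cong C$ is semidualizing; it gives no way to \emph{produce} such a $B$ from $C$. Whether every semidualizing module over $R/(\ell)$ lifts to $R$ is exactly the kind of question that is open in general, so as written your argument assumes the conclusion.

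The paper avoids this entirely by proving the reverse inequality $|\s_0(\sfk_t(Y))|\leq |\s_0(\sfk_t(Y_0))|\cdots|\s_0(\sfk_t(Y_{k^\bullet}))|$ by induction on $t$ rather than by lifting. The base case $t=2$ is \cite[Theorem~3.12]{SWSeSpP2}. For $t>2$ one uses the injection $\s_0(\sfk_t(Y))\hookrightarrow\s_0(\sfk_t(Y)_x)$ together with the isomorphism $\sfk_t(Y)_x\cong\sfk_{t-1}(Z)[B_1]_{\mfx}$ of Proposition~\ref{prop:invertx} and \cite[Corollary~3.11(b)]{sather:divisor} to get $|\s_0(\sfk_t(Y))|\leq|\s_0(\sfk_{t-1}(Z))|$; then the analysis of Remark~\ref{disc190808a} produces $(t-1)$-connected subladders $Z_0,\ldots,Z_{k^\bullet}$ of $Z$ satisfying Assumption~(d) and matching the Gorenstein behavior of the $Y_j$, so that Corollary~\ref{cor190808a}, the induction hypothesis applied to the $(t-1)$-components of $Z$, and Theorem~\ref{tbyttrivial} give $|\s_0(\sfk_{t-1}(Z))|=|\s_0(\sfk_t(Y_0))|\cdots|\s_0(\sfk_t(Y_{k^\bullet}))|$. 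Combined with your (correct) lower bound, this forces $\Phi$ to be bijective without ever lifting a module across $\ull$. If you want to salvage your outline, replace the lifting step with this localization-and-counting argument.
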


\begin{proof}
The proof is similar to that of Corollary~\ref{thm:classgroup} by induction on $t$.
In light of Remark~\ref{disc190808c}, it suffices to show that $|\s_0(\sfk_t(Y))| \leq |\s_0(\sfk_t(Y_0))| \cdots|\s_0(\sfk_t(Y_{k^{\bullet}}))|$.

The case $t=2$ follows from \cite[Theorem 3.12]{SWSeSpP2}.  If $t>2$, then we 
consider the diagram from the proof of Theorem~\ref{tbyttrivial}.
$$\xymatrix{
\Cl(\sfk_t(Y))\ar[r]^-\cong
&\Cl(\sfk_t(Y)_x)\ar[r]^-\cong
&\Cl(\sfk_{t-1}(Z)[B_1]_\mfx)
&\Cl(\sfk_{t-1}(Z))\ar[l]_-\cong
\\
\s_0(\sfk_t(Y))\ar@{^(->}[r]\ar@{^(->}[u]
&\s_0(\sfk_t(Y)_x)\ar[r]^-\approx\ar@{^(->}[u]
&\s_0(\sfk_{t-1}(Z)[B_1]_\mfx)\ar@{^(->}[u]
&\s_0(\sfk_{t-1}(Z))\ar[l]_-\approx\ar@{^(->}[u]
}$$
Because of the injectivity/bijectivity of the maps in the bottom row, 
it suffices to show that $|\s_0(\sfk_{t-1}(Z))| = |\s_0(\sfk_t(Y_0))| \cdots|\s_0(\sfk_t(Y_{k^{\bullet}}))|$.

To this end, the analysis of Remark~\ref{disc190808a} shows that 
there are $(t-1)$-connected subladders $Z_0,\ldots,Z_{k^\bullet}$ of $Z$ corresponding exactly to the subladders 
$Y_0,\ldots,Y_{k^\bullet}$ of $Y$ 
such that 
\begin{enumerate}[(1)]
\item \label{c190811a}
each $Z_\ell$ satisfies Assumption~(d),
\item  \label{c190811b}
$\sfk_{t-1}(Z_\ell)$ is Gorenstein if and only if $\sfk_t(Y_\ell)$ is Gorenstein,
and
\item  \label{c190811c}
there are integers $u$ and $0=k_0<\cdots<k_u<k_{u+1}=k^\bullet+1$ such that
the ladders $Z_{k_i}\cup\cdots\cup Z_{k_{i+1}-1}$ for $i=0,\ldots,u$ are the $(t-1)$-components of $Z$.
\end{enumerate}
Thus, our argument will be complete once we verify the next sequence of equalities.
\begin{align*}
|\s_0(\sfk_{t-1}(Z))| 
&= |\s_0(\sfk_{t-1}(Z_0\cup\cdots\cup Z_{k_1-1}))| \cdots|\s_0(\sfk_{t-1}(Z_{k_u}\cup\cdots\cup Z_{k^\bullet}))|
\\
&= |\s_0(\sfk_{t-1}(Z_0))| \cdots|\s_0(\sfk_{t-1}(Z_{k^{\bullet}}))|
\\
&= |\s_0(\sfk_t(Y_0))| \cdots|\s_0(\sfk_t(Y_{k^{\bullet}}))|
\end{align*}
The first equality is from Corollary~\ref{cor190808a}, and 
the second equality is from our induction hypothesis applied to the $(t-1)$-connected ladders $Z_{k_i}\cup\cdots\cup Z_{k_{i+1}-1}$.
The third equality is from Theorem~\ref{tbyttrivial}, with help from conditions~\eqref{c190811a}--\eqref{c190811b} above,
since each $Y_\ell$ satisfies Assumption~(d).
\end{proof}

\begin{ex}
We compute $\s_0(\sfk_3(L_i))$ for the ladders $L_i$ from Example~\ref{cornertypes}.  The sub-ladders $Y_{\ell}$ of each $L_i$, if they exist, are shown below; refer back to Example~\ref{cornertypes} for the original $L_i$ if necessary.

\begin{center}
    \begin{picture}(290,100)
      \put(60,85){$X_{13}$}
      \put(78,85){$X_{14}$}
      \put(96,85){$X_{15}$}
      \put(60,70){$X_{23}$}
      \put(78,70){$X_{24}$}
      \put(96,70){$X_{25}$}    
      \put(-30,55){$X_{31}$}
      \put(-12,55){$X_{32}$}
      \put(6,55){$X_{33}$}
      \put(24,55){$X_{34}$}
      \put(60,55){$X_{33}$}
      \put(78,55){$X_{34}$}
      \put(96,55){$X_{35}$}  
      \put(-30,40){$X_{41}$}
      \put(-12,40){$X_{42}$}
      \put(6,40){$X_{43}$}
      \put(24,40){$X_{44}$}
      \put(60,40){$X_{43}$}
      \put(78,40){$X_{44}$}
      \put(96,40){$X_{45}$} 
      \put(-30,25){$X_{51}$} 
      \put(-12,25){$X_{52}$}
      \put(6,25){$X_{53}$}
      \put(24,25){$X_{54}$}
      \put(-15, 10){\text{ Ladders }$Y_1, Y_2$ { for } $L_1$}
      \put(270,85){$X_{13}$}
      \put(288,85){$X_{14}$}
      \put(306,85){$X_{15}$}
      \put(270,70){$X_{23}$}
      \put(288,70){$X_{24}$}
      \put(306,70){$X_{25}$}
      \put(180,55){$X_{31}$}
      \put(198,55){$X_{32}$}
      \put(216,55){$X_{33}$}
      \put(234,55){$X_{34}$}
      \put(270,55){$X_{33}$}
      \put(288,55){$X_{34}$}
      \put(306,55){$X_{35}$}
      \put(180,40){$X_{41}$}
      \put(198,40){$X_{42}$}
      \put(216,40){$X_{43}$}
      \put(234,40){$X_{44}$}
      \put(180,25){$X_{51}$}
      \put(198,25){$X_{52}$}
      \put(216,25){$X_{53}$}
      \put(234,25){$X_{54}$}
       \put(200, 10){\text{ Ladders }$Y_1, Y_2$ { for } $L_2$}
 \end{picture}
  \end{center}

\begin{center}
    \begin{picture}(290,100)
      \put(20,85){$X_{13}$}
      \put(40,85){$X_{14}$}
      \put(60,85){$X_{15}$}
      \put(20,70){$X_{23}$}
      \put(40,70){$X_{24}$}
      \put(60,70){$X_{25}$}
      \put(-20,55){$X_{31}$}    
      \put(0,55){$X_{32}$}
      \put(20,55){$X_{33}$}
      \put(40,55){$X_{34}$}
      \put(60,55){$X_{35}$}  
      \put(-20,40){$X_{41}$}
      \put(0,40){$X_{42}$}
      \put(20,40){$X_{43}$}
      \put(40,40){$X_{44}$}
      \put(60,40){$X_{45}$}  
      \put(-20,25){$X_{51}$}
      \put(0,25){$X_{52}$}
      \put(20,25){$X_{53}$}
      \put(40,25){$X_{54}$}
      \put(60,25){$X_{55}$}
      \put(-20,10){$X_{61}$}
      \put(0,10){$X_{62}$}
      \put(20,10){$X_{63}$}
      \put(40,10){$X_{64}$}
       \put(-15, -5){\text{ Ladder }$L_3$: {no } $Y_{\ell}$ }
      \put(260,85){$X_{13}$}
      \put(280,85){$X_{14}$}
      \put(300,85){$X_{15}$}
      \put(260,70){$X_{23}$}
      \put(280,70){$X_{24}$}
      \put(300,70){$X_{25}$}
      \put(180,55){$X_{31}$}
      \put(200,55){$X_{32}$}
      \put(220,55){$X_{33}$}
      \put(260,55){$X_{33}$}
      \put(280,55){$X_{34}$}
      \put(300,55){$X_{35}$}
      \put(180,40){$X_{41}$}
      \put(200,40){$X_{42}$}
      \put(220,40){$X_{43}$}
      \put(180,25){$X_{51}$}
      \put(200,25){$X_{52}$}
      \put(220,25){$X_{53}$}
       \put(200,-5){\text{ Ladders } $Y_1, Y_2$ { for } $L_4$}
 \end{picture}
  \end{center}
\smallskip

In $Y = L_1$, the upper inside corner $T_1'$ has type 1.1, as per Definition~\ref{type1.1}, so $k^\bullet=1$.   For $\ell=1,2$, since $Y_\ell$ is rectangular and not square, the ring $\sfk_3(Y_\ell)$ is not Gorenstein by~\cite[Theorem~5.2]{MR1413891}. 
Thus, Theorem~\ref{sdmstbyt} implies that
$|\s_0(\sfk_3(L_1))|=4$.

For $Y = L_2$, decompose $Y$ similarly into $Y_1$ and $Y_2$ as displayed above.  In this case, each ladder $Y_\ell$ is rectangular, but only one is square. Thus, one of the rings $\sfk_3(Y_\ell)$ is  Gorenstein, while the other is not, again by~\cite[Theorem~5.2]{MR1413891}, thus, it follows that $|\s_0(\sfk_3(L_2))|=2$.

In $Y = L_3$, since $k^{\bullet}=0$, we do not decompose $L_3$.  In this case, since the underlying matrix of variables $X$ is not square, we see that 
$\sfk_3(Y)$ is not Gorenstein by~\cite[Theorem~5.2]{MR1413891}, hence, $|\s_0(\sfk_3(L_3))|=2$.
\smallskip

Lastly, $Y=L_4$ decomposes into two square subladders $Y_i$, thus each ring $\sfk_3(Y_\ell)$ is  Gorenstein by~\cite[Theorem~5.2]{MR1413891}, and hence,
$|\s_0(\sfk_3(L_4))|=1$. In particular, $\sfk_3(L_4)$ is Gorenstein.

\smallskip

\end{ex}

The next result is a version of Theorem~\ref{sdmstbyt} for $t$-disconnected ladders.

\begin{cor} \label{cor190812a}
Let $Y_1,\ldots,Y_s$ be the $t$-components of $Y$.
For $i=1,\ldots,s$, let $k_i^\bullet$ be the number of upper inside corners of $Y_i$ of type 1.1, and let
$Y_{i,0},\ldots,Y_{i,k_i^\bullet}$ be subladders of $Y_i$ as in Remark~\ref{disc190808c}.
Then
  \[
    \s_0(\sfk_t(Y))
    \approx\prod_{i=1}^s\prod_{j=0}^{k_i^{\bullet}}\s_0(\sfk_t(Y_{i,j})).
  \]
  In particular, $|\s_0(R)| = \prod_{i=1}^s\prod_{j=0}^{k_i^{\bullet}}|\s_0(\sfk_t(Y_{i,j}))|
  =2^{(\sum_{i=1}^s\sum_{j=0}^{k_i^{\bullet}}\ve_{i,j})}$, where $\ve_{i,j}=0$ if $\sfk_t(Y_{i,j})$ is Gorenstein
  and $\ve_{i,j}=1$ otherwise.
\end{cor}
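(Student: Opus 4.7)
The plan is to chain together the two decomposition results already in hand: Corollary~\ref{cor190808a} for splitting across $t$-components, and Theorem~\ref{sdmstbyt} for splitting a single $t$-connected ladder into its ``sub-ladder'' pieces $Y_{i,j}$.

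First, apply Corollary~\ref{cor190808a} to $Y$ with its $t$-components $Y_1, \ldots, Y_s$ to obtain a relation-respecting bijection
\[
\s_0(\sfk_t(Y)) \approx \prod_{i=1}^s \s_0(\sfk_t(Y_i)).
\]
Next, for each $i$, the ladder $Y_i$ is $t$-connected, so Theorem~\ref{sdmstbyt} applied to $Y_i$ yields a relation-respecting bijection
\[
\s_0(\sfk_t(Y_i)) \approx \prod_{j=0}^{k_i^{\bullet}} \s_0(\sfk_t(Y_{i,j})),
\]
where the $Y_{i,j}$ are the sub-ladders of $Y_i$ described in Remark~\ref{disc190808c}; in particular, each $Y_{i,j}$ is $t$-connected and satisfies Assumption~(d). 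Combining the displays gives the first claimed bijection.

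For the cardinality statement, each $Y_{i,j}$ satisfies Assumption~(d), so Theorem~\ref{tbyttrivial} implies $\sfk_t(Y_{i,j})$ has only trivial semidualizing modules. Thus $|\s_0(\sfk_t(Y_{i,j}))| = 1$ if $\sfk_t(Y_{i,j})$ is Gorenstein (free and dualizing modules coincide) and $|\s_0(\sfk_t(Y_{i,j}))| = 2$ otherwise; with $\ve_{i,j}$ defined accordingly, $|\s_0(\sfk_t(Y_{i,j}))| = 2^{\ve_{i,j}}$. Taking the product over all $i$ and $j$ through the bijection above yields $|\s_0(\sfk_t(Y))| = 2^{\sum_{i,j} \ve_{i,j}}$, as desired.

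There is no real obstacle here since both of the main ingredients are already established; the only thing to be careful about is to verify that the bijection from Corollary~\ref{cor190808a} respects the relation $\tri$ in the same way the one from Theorem~\ref{sdmstbyt} does, so that the composition of the two bijections is still relation-respecting in the sense of the notation $\approx$. This is immediate from the construction of both maps as tensor-product maps $([C_i]) \mapsto [\bigotimes_\sfk C_i]$, together with the product-relation convention from Example~\ref{ex181224c}.
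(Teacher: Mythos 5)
Your proposal is correct and matches the paper's proof, which is exactly the one-line combination of Corollary~\ref{cor190808a} (splitting over $t$-components) with Theorem~\ref{sdmstbyt} (splitting each $t$-connected component into its sub-ladders $Y_{i,j}$). The cardinality computation and the remark about the tensor-product maps respecting $\tri$ are fine elaborations of what the paper leaves implicit.
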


\begin{proof}
Combine Corollary~\ref{cor190808a} and Theorem~\ref{sdmstbyt}.
\end{proof}

Now we return to our work toward removing the assumptions \eqref{thm181226a1} and \eqref{thm181226a2} 
from
Theorem~\ref{thm181226a}. 

\begin{disc}\label{disc190811a}
Assume that $Y$ is $t$-connected and use
the notation of Remark~\ref{disc190808c}.
Set $\Lambda^D=D_t(Y_0)\otimes_D\cdots\otimes_DD_t(Y_{k^\bullet})$.
Since the $\ell_i$'s from Remark~\ref{disc190808c} are differences of variables, it is straightforward to show that $\Lambda^D/(\ull)\cong D_t(Y)$.
Note that $\Lambda^D$ is a normal domain by Lemma~\ref{normaldomain} applied to the disjoint union of the $Y_i$.

We claim that the sequence $\ull$ is $\Lambda^D$-regular. 
Let $K_D=K^{\Lambda^D}(\ull)$ be the Koszul complex over $\Lambda^D$, and let $K_D^+$ denote the 
augmented Koszul complex
$$K_D^+: \qquad 0\to \Lambda^D\to\cdots\to \Lambda^D\to \Lambda^D/(\ull)\to 0.$$
As noted in Remark~\ref{disc190808c}, the sequence $\ull$ is $\Lambda^\sfk$-regular for every field $\sfk$,
hence $K_\sfk^+\cong K_\bbz^+\otimes_\bbz\sfk$ is exact.
Now, use~\cite[Lemma~2.6]{foxby:bcfm} as in the proof of Lemma~\ref{lem190329a} to conclude that $K_D^+$ is exact.
Since the sequence $\ull$ is homogeneous, the exactness of $K_D^+$ implies that the sequence is regular, as desired.
\end{disc}

The following lemmas allow Theorem~\ref{thm181226a} to be invoked in the proof of our main result, namely Theorem \ref{thm181230a}, which is a culmination of our efforts to describe the semidualizing modules of $A_t(Y)$, 
where $Y$ is not necessarily $t$-connected.

\begin{lem}\label{lem190103b}
When $Y$ is $t$-connected, the natural map $\s_0(D_t(Y))\stackrel{\phi}{\to}\s_0(L_t(Y))$ is bijective. 
\end{lem}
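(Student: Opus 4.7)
The plan is to establish surjectivity of $\phi$ by explicitly lifting each semidualizing $L_t(Y)$-module back to a semidualizing $D_t(Y)$-module, since injectivity is already given by Lemma~\ref{lem190807b}.

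First, I would invoke the decomposition machinery from Remarks~\ref{disc190808c} and~\ref{disc190811a} in parallel over both $D$ and $L$. Writing $\Lambda^F = F_t(Y_0) \otimes_F \cdots \otimes_F F_t(Y_{k^\bullet})$ for $F \in \{D,L\}$ with each sub-ladder $Y_j$ satisfying Assumption~(d), one has $F_t(Y) \cong \Lambda^F/(\ull)$ with $\ull$ a $\Lambda^F$-regular sequence of differences of variables. By Theorem~\ref{sdmstbyt} applied to $L$, every element $[N]\in\s_0(L_t(Y))$ is represented by $C^L/(\ull)C^L$ where $C^L = \bigotimes_{j=0}^{k^\bullet} C_j^L$ and each $C_j^L$ is either $L_t(Y_j)$ or $\omega_{L_t(Y_j)}$.

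Next, I would lift factor by factor. Since $D$ is a PID, $D[Y_j]$ is Gorenstein and the determinantal ideal $I_t(Y_j)$ is perfect, so $D_t(Y_j)$ is Cohen--Macaulay and admits a canonical module $\omega_{D_t(Y_j)}$ satisfying $L\otimes_D \omega_{D_t(Y_j)} \cong \omega_{L_t(Y_j)}$. Setting $C_j^D$ equal to $D_t(Y_j)$ or $\omega_{D_t(Y_j)}$ in correspondence with $C_j^L$, I define $C^D = \bigotimes_{j=0}^{k^\bullet} C_j^D$ over $D$. Iterated application of \cite[Proposition~3.5]{SWSeSpG}, as used in Example~\ref{ex181224c}, shows that $C^D$ is semidualizing over $\Lambda^D$, and compatibility of tensor with base change yields $L\otimes_D C^D \cong C^L$.

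The main step is to show $C^D/(\ull)C^D$ is semidualizing over $D_t(Y) = \Lambda^D/(\ull)$; for this, a standard reduction-by-regular-sequence argument applies once $\ull$ is verified to be $C^D$-regular. Each $C_j^D$ is rank-$1$ reflexive, hence torsion-free, over the normal domain $D_t(Y_j)$, so flat over the PID $D$ by Remark~\ref{disc181224a}; thus $C^D$ is flat over $D$. Mimicking the Koszul-complex argument of Remark~\ref{disc190811a}, for every residue field $\sfk$ of $D$ the base change $C^D \otimes_D \sfk \cong \bigotimes_{\sfk} (C_j^D \otimes_D \sfk)$ is a tensor product of trivial semidualizing $\sfk_t(Y_j)$-modules, hence semidualizing (and so maximal Cohen--Macaulay) over $\Lambda^\sfk$; since $\ull$ is $\Lambda^\sfk$-regular by Remark~\ref{disc190808c}, the augmented Koszul complex with coefficients in $C^D\otimes_D \sfk$ is exact. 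Invoking \cite[Lemma~2.6]{foxby:bcfm} on the augmented Koszul complex with coefficients in $C^D$ then lifts exactness to $D$ itself, so $\ull$ is $C^D$-regular.

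With this in hand, $C^D/(\ull)C^D$ is a semidualizing $D_t(Y)$-module whose image under $\phi$ is $C^L/(\ull)C^L \cong N$, establishing surjectivity. The principal obstacle is the $\ull$-regularity verification on $C^D$, which I expect to address by combining flatness of $C^D$ over $D$ with the residue-field exactness argument above; the remaining steps amount to bookkeeping with machinery already developed earlier in the section.
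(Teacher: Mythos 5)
Your proposal is correct and follows essentially the same route as the paper's proof: injectivity via Lemma~\ref{lem190807b}, then surjectivity by decomposing through $\Lambda^L/(\ull)$ as in Remarks~\ref{disc190808c} and~\ref{disc190811a}, invoking Theorem~\ref{sdmstbyt} to write any class as a tensor product of trivial semidualizing factors, lifting each factor to $D_t(Y_j)$, and pushing the resulting class down the natural map $\s_0(\Lambda^D)\to\s_0(\Lambda^D/(\ull))$. The one place you diverge is the hands-on verification that $\ull$ is $C^D$-regular: the paper gets this for free (an $R$-regular sequence is automatically regular on any semidualizing module, since $\ass C=\ass R$, so regularity of $\ull$ on $\Lambda^D$ from Remark~\ref{disc190811a} suffices), and your version of the Foxby argument should be applied to the un-augmented Koszul complex, because flatness of the augmentation term $C^D/(\ull)C^D$ over $D$ is not yet known at that stage.
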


\begin{proof}
Lemma~\ref{lem190807b}  shows that $\phi$ is injective; it remains to show that 
$\phi$ is surjective. 
Use the notation from Remarks~\ref{disc190808c} and~\ref{disc190811a}.
Then Theorem \ref{sdmstbyt} shows that an arbitrary  $[C]\in\s_0(L_t(Y))$ is the image of 
$[C_0\otimes_L\cdots\otimes_LC_{k^\bullet}]$ under the natural map $\s_0(\Lambda^L)\to\s_0(\Lambda^L/(\underline\ell))$.
Moreover, \emph{loc.\ cit.} shows that for $i=0,\ldots,k^\bullet$ we have $C_i\cong L_t(Y_i)\cong L_t(Y_i)\otimes_{D_t(Y_i)}D_t(Y_i)$
or $C_i\cong \omega_{L_t(Y_i)}\cong L_t(Y_i)\otimes_{D_t(Y_i)}\omega_{D_t(Y_i)}$ since the map $D_t(Y)\to L_t(Y)$ is a localization map.
It follows that there are $[B_i]\in\s_0(D_t(Y_i))$ equal to $[D_t(Y_i)]$ or $[\omega_{D_t(Y_i)}]$ such that
$[C_0\otimes_L\cdots\otimes_LC_{k^\bullet}]$ is the image of $[B_0\otimes_L\cdots\otimes_LB_{k^\bullet}]\in\s_0(\Lambda^D)$ under the natural map
$\s_0(\Lambda^D)\to\s_0(\Lambda^L)$. 
Let $[B]\in\s_0(\Lambda^D/(\underline\ell))$ be the image of $[B_0\otimes_L\cdots\otimes_LB_{k^\bullet}]$ under the natural map
$\s_0(\Lambda^D)\to\s_0(\Lambda^D/(\underline\ell))$, as in the next diagram.
$$\xymatrix@C=8mm{
[B_0\otimes_L\cdots\otimes_LB_{k^\bullet}]\ar@{|->}[dd]\ar@{|->}[rrr]&&&[C_0\otimes_L\cdots\otimes_LC_{k^\bullet}]\ar@{|->}[dd]\\
&\s_0(\Lambda^D)\ar[r]\ar[d]&\s_0(\Lambda^L)\ar[d]&\\
[B]&\s_0(\Lambda^D/(\underline\ell))\ar[r]^-\phi&\s_0(\Lambda^D/(\underline\ell))&[C]
}$$
Commutativity of the diagram shows that $[C]=\phi([B])$, so $\phi$ is surjective.
\end{proof}

The version of Lemma~\ref{lem190103b} 
given next
considers ladders that are $t$-disconnected.

\begin{lem}\label{lem190812a}
The natural map $\phi\colon\s_0(D_t(Y))\to\s_0(L_t(Y))$ is bijective. 
\end{lem}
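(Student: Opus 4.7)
The plan is to induct on the number $s$ of $t$-components of $Y$. The base case $s=1$ is exactly Lemma~\ref{lem190103b}, so assume $s \geq 2$. Write $Y = Y' \sqcup Y_s$ where $Y' = Y_1 \cup \cdots \cup Y_{s-1}$, and use the same disjoint-union identifications as in the proof of Corollary~\ref{cor190402a}\eqref{cor190402a1} to obtain the ring isomorphisms $D_t(Y) \cong (D_t(Y'))_t(Y_s)$ and $L_t(Y) \cong (L_t(Y'))_t(Y_s)$.

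Next I would apply Theorem~\ref{thm181226a} twice. First, with the normal domain $D_t(Y')$ (containing the PID $D$) and the ladder $Y_s$: since $Y_s$ is $t$-connected, Lemma~\ref{lem190103b} gives surjectivity of $\s_0(D_t(Y_s))\to\s_0(L_t(Y_s))$, which is precisely condition~\eqref{thm181226a2} of that theorem. Thus the natural map
\[
\s_0(D_t(Y'))\times\s_0(D_t(Y_s))\to\s_0(D_t(Y)),\qquad ([C'],[C_s])\mapsto [C'\otimes_D C_s],
\]
is bijective. Second, with the normal domain $L_t(Y')$ (containing the field $L$, viewed as its own PID subring) and the ladder $Y_s$: condition~\eqref{thm181226a1} is immediate, so the natural map
\[
\s_0(L_t(Y'))\times\s_0(L_t(Y_s))\to\s_0(L_t(Y)),\qquad ([E'],[E_s])\mapsto [E'\otimes_L E_s],
\]
is also bijective.

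Finally, these assemble into a commutative square
\[
\xymatrix{
\s_0(D_t(Y'))\times\s_0(D_t(Y_s))\ar[r]^-\approx\ar[d]_-{\phi'\times\phi_s}&\s_0(D_t(Y))\ar[d]^-\phi\\
\s_0(L_t(Y'))\times\s_0(L_t(Y_s))\ar[r]^-\approx&\s_0(L_t(Y))
}
\]
whose commutativity reduces to the identification $(C'\otimes_D C_s)\otimes_D L\cong (C'\otimes_D L)\otimes_L (C_s\otimes_D L)$ together with the fact that $\phi$ (resp.\ $\phi'$, $\phi_s$) is the base-change map $-\otimes_D L$. By the inductive hypothesis, $\phi'$ is bijective, and Lemma~\ref{lem190103b} gives that $\phi_s$ is bijective; hence so is $\phi$, completing the induction. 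The only real obstacle is arranging the two applications of Theorem~\ref{thm181226a} so that its hypotheses are met simultaneously: pulling off a single $t$-connected component $Y_s$ is exactly what makes Lemma~\ref{lem190103b} applicable at the $D$-level to supply condition~\eqref{thm181226a2}, while the presence of the field $L$ in $L_t(Y')$ trivially yields condition~\eqref{thm181226a1} at the $L$-level.
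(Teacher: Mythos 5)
Your proof is correct, and its skeleton matches the paper's: decompose $Y$ into its $t$-components, reduce to the $t$-connected case via Lemma~\ref{lem190103b}, and close a commutative square relating the product of componentwise maps to $\phi$. The difference lies in how the square is closed. The paper forms the full product $\prod_{i=1}^s\s_0(D_t(Y_i))\to\s_0(D_t(Y))$ in one step, needs only that this top map is \emph{well-defined} (Example~\ref{ex181224c}), gets bijectivity of the bottom map from Corollary~\ref{cor190808a}, and then combines a surjectivity diagram chase with the separately established injectivity of $\phi$ (Lemma~\ref{lem190807b}). You instead peel off one component at a time and invoke Theorem~\ref{thm181226a} twice: condition~\eqref{thm181226a2} with the normal domain $D_t(Y')$, its hypothesis supplied by Lemma~\ref{lem190103b} for the $t$-connected $Y_s$, gives bijectivity of the top map, while condition~\eqref{thm181226a1} with $L_t(Y')$ gives bijectivity of the bottom map (this is essentially re-deriving the inductive step of Corollary~\ref{cor190808a}); three bijections in the square then force the fourth. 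Your route proves slightly more along the way --- bijectivity, not merely well-definedness, of $\s_0(D_t(Y'))\times\s_0(D_t(Y_s))\to\s_0(D_t(Y))$ --- and dispenses with the injectivity input from Lemma~\ref{lem190807b}. Both arguments are valid and non-circular, since Theorem~\ref{thm181226a} and Lemma~\ref{lem190103b} are established before this lemma.
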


\begin{proof}
Let $Y_1,\ldots,Y_s$ be the $t$-components of $Y$.
Lemma~\ref{lem190103b} implies that each induced map $\phi_i\colon\s_0(D_t(Y_i))\to\s_0(L_t(Y_i))$ is bijective. 
Using the isomorphisms $D_t(Y)\cong D_t(Y_1)\otimes_D\cdots\otimes_D D_t(Y_s)$ and $L_t(Y)\cong L_t(Y_1)\otimes_L\cdots\otimes_L L_t(Y_s)$,
the horizontal maps in the following commutative diagram 
and the bijectivity of one of them are from Example~\ref{ex181224c} and Corollary~\ref{cor190808a}.
$$\xymatrix{
\prod_{i=1}^s\s_0(D_t(Y_i))\ar[r]\ar[d]_\approx^{\prod_i\phi_i}
&\s_0(D_t(Y))\ar@{^(->}[d]^\phi
\\
\prod_{i=1}^s\s_0(L_t(Y_i))\ar[r]^-\approx&\s_0(L_t(Y))}$$
The map $\phi$ is injective by Lemma~\ref{lem190807a}.
A straightforward diagram chase shows that $\phi$ is also surjective, as desired.
\end{proof}

The main theorem of this paper is below.
It is a version of Theorem~\ref{thm181226a} without the conditions \eqref{thm181226a1}--\eqref{thm181226a2}.
Note that the ladder $Y$ may be $t$-disconnected.
See Example~\ref{ex181224c} for a discussion of transitivity for $\tri$.

\begin{thm}
\label{thm181230a}
Let $A$ be a normal domain and $Y$ a ladder as in Assumption~\ref{ass190807c}. 
The natural maps below are bijections:
$$\s_0(A)\times\s_0(D_t(Y))\to\s_0(A_t(Y))\to\s_0(A_t(Y)_f)\times\s_0(K_t(Y)).$$ 
In particular,
with notation as in Corollary~\ref{cor190812a},
\begin{align*}
\s_0(A_t(Y))&\approx\s_0(A_t(Y)_f)\times\s_0(K_t(Y))\approx\s_0(A)\times\s_0(K_t(Y))\\
&\approx\s_0(A)\times\prod_{i=1}^s\prod_{j=0}^{k_i^{\bullet}}\s_0(\sfk_t(Y_{i,j}))\approx\s_0(A)\times\{0,1\}^e,
\end{align*}
where $e$ is the number of ladders $Y_{i,j}$ such that $K_t(Y_{i,j})$ is not Gorenstein.
Thus, $\s_0(A_t(Y))$ is finite if and only if $\s_0(A)$ is finite, and the relation $\tri$ on 
$\s_0(A_t(Y))$ is transitive if and only if the relation $\tri$ on $\s_0(A)$ is so.
\end{thm}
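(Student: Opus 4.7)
The plan is to derive Theorem~\ref{thm181230a} essentially as a corollary of the machinery already set up, with Lemma~\ref{lem190812a} as the key enabling step. Theorem~\ref{thm181226a} already proved the central bijections under condition \eqref{thm181226a1} or \eqref{thm181226a2}. Since Lemma~\ref{lem190812a} establishes that $\s_0(D_t(Y))\to\s_0(L_t(Y))$ is bijective \emph{unconditionally} (for any ladder $Y$, not just $t$-connected ones), condition \eqref{thm181226a2} is automatically satisfied. Therefore I would simply invoke Theorem~\ref{thm181226a}\eqref{thm181226a2} to obtain the bijections
\[
\s_0(A)\times\s_0(D_t(Y))\to\s_0(A_t(Y))\to\s_0(A_t(Y)_f)\times\s_0(K_t(Y)),
\]
which immediately yields the relation-respecting equivalences $\s_0(A_t(Y))\approx\s_0(A_t(Y)_f)\times\s_0(K_t(Y))\approx\s_0(A)\times\s_0(K_t(Y))$.

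For the refined product decomposition involving the $Y_{i,j}$, I would chain this with Corollary~\ref{cor190812a}, which decomposes $\s_0(K_t(Y))$ along the $t$-components $Y_i$ and their subladder splittings $Y_{i,j}$ as $\prod_{i,j}\s_0(K_t(Y_{i,j}))$. Each factor $\s_0(K_t(Y_{i,j}))$ is a set of one or two elements: by Theorem~\ref{tbyttrivial} applied to $K_t(Y_{i,j})$ (which is $t$-connected and satisfies Assumption~(d) by construction in Remark~\ref{disc190808c}), the only semidualizing modules are the trivial ones, so the cardinality is $1$ when $K_t(Y_{i,j})$ is Gorenstein and $2$ otherwise. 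Matching these factors against $\{0,1\}$ (via $[K_t(Y_{i,j})]\leftrightarrow 0$ and $[\omega_{K_t(Y_{i,j})}]\leftrightarrow 1$ where applicable) yields the $\{0,1\}^e$-description, with $e$ equal to the count of non-Gorenstein factors.

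For the final two equivalences I would argue formally from the product decomposition $\s_0(A_t(Y))\approx\s_0(A)\times\{0,1\}^e$. Finiteness is immediate since $\{0,1\}^e$ is finite. For transitivity of the relation $\tri$, since the bijection is perfectly relation-respecting and $\tri$ on a product is the product relation, $\tri$ on the left is transitive iff it is transitive on both factors. On $\{0,1\}^e$ the relation is transitive for free: each coordinate factor $\s_0(K_t(Y_{i,j}))$ has at most two elements, and since $\tri$ is reflexive (as noted in Example~\ref{ex181224c}), any chain $a\tri b\tri c$ in a two-element set is handled trivially by reflexivity. Thus transitivity on $\s_0(A_t(Y))$ reduces to transitivity on $\s_0(A)$.

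I do not expect any serious obstacle in this proof, as the heavy lifting occurred in Lemma~\ref{lem190812a} (which in turn depends on Lemma~\ref{lem190103b}, Theorem~\ref{sdmstbyt}, and the regularity argument of Remark~\ref{disc190811a}) and in Corollary~\ref{cor190812a}. The only subtlety worth writing out carefully is that the equivalences $\approx$ are relation-respecting in both directions, so the transitivity statement genuinely transfers; but this is already built into the definition of $\approx$ used throughout the paper and into the cited bijections, so no new computation is required.
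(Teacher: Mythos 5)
Your proposal is correct and follows essentially the same route as the paper: the paper's proof likewise obtains the bijections by combining Theorem~\ref{thm181226a}\eqref{thm181226a2} (made unconditional via Lemma~\ref{lem190812a}) with Corollary~\ref{cor190812a}, and then handles finiteness and transitivity exactly as you do, via the finiteness and transitivity of the (product) relation on $\{0,1\}^e$. No gaps.
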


\begin{proof}
The bijections follow from Theorem~\ref{thm181226a}\eqref{thm181226a2}, Corollary~\ref{cor190812a},
and Lemma~\ref{lem190812a}.  Consequently, since 
$\{0,1\}^e$
is finite, it is clear that $\s_0(A_t(Y))$ and $\s_0(A)$ 
are either both finite or neither is.
Next, the set $\{0,1\}$ is totally ordered by $\tri$ (that is, $\leq$). Then, in particular, $\tri$ is a partial order on any finite product of copies of $\{0,1\}$; i.e., $\tri$ is transitive. Therefore, if $\tri$ is a partial order on $\s_0(A)$, then it is a partial order on $\s_0(A_t(Y))$, and vice versa.
\end{proof}

\begin{cor}
\label{cor190103a}
If $Y$ is $t$-connected and satisfies Assumption~(d),
then 
\begin{align*}
\s_0(A_t(Y))
\!\approx\!\s_0(A)\times\s_0(K_t(Y))
\approx\begin{cases}
\s_0(A)&\text{if $K_t(Y)$ is Gorenstein}\\
\s_0(A)\times\{0,1\}\!\!&\text{if $K_t(Y)$ is not Gorenstein.}
\end{cases}
\end{align*}
\end{cor}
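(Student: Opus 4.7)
The plan is to deduce this corollary directly from the machinery already assembled in Theorem~\ref{thm181230a}, combined with the triviality result of Theorem~\ref{tbyttrivial}. The first bijection $\s_0(A_t(Y)) \approx \s_0(A) \times \s_0(K_t(Y))$ is just a restatement of the second isomorphism in Theorem~\ref{thm181230a}, so no new work is required there. The content of the corollary is therefore to describe $\s_0(K_t(Y))$ explicitly in the special case where $Y$ is $t$-connected and satisfies Assumption~(d).

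First I would apply Theorem~\ref{tbyttrivial} with $\sfk = K$: since $Y$ is $t$-connected and satisfies Assumption~(d), the ring $K_t(Y)$ has only trivial semidualizing modules, namely the free rank-one module $K_t(Y)$ itself and the canonical module $\omega_{K_t(Y)}$. Thus $\s_0(K_t(Y))$ has at most two elements, with the two classes $[K_t(Y)]$ and $[\omega_{K_t(Y)}]$ coinciding precisely when $K_t(Y)$ is Gorenstein.

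Next I would split into two cases. If $K_t(Y)$ is Gorenstein, then $\s_0(K_t(Y))$ consists of a single element, the class of the free module, and so $\s_0(A) \times \s_0(K_t(Y)) \approx \s_0(A)$ as ordered sets under the product relation $\tri$. If $K_t(Y)$ is not Gorenstein, then $\s_0(K_t(Y)) = \{[K_t(Y)], [\omega_{K_t(Y)}]\}$ has exactly two elements, and the relation $\tri$ on this two-element set is exactly the total order $\leq$ on $\{0,1\}$ (with the free class as $0$ and the canonical class as $1$); hence $\s_0(K_t(Y)) \approx \{0,1\}$ as ordered sets, and $\s_0(A) \times \s_0(K_t(Y)) \approx \s_0(A) \times \{0,1\}$.

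I do not anticipate a substantive obstacle, since Theorem~\ref{thm181230a} already packages the bijection with $\s_0(A) \times \s_0(K_t(Y))$ in full generality; the only point requiring care is verifying that the bijections respect the ordering $\tri$, so that the ``perfectly relation-respecting'' nature of $\approx$ is preserved in both cases. This is immediate from the fact that a one-element set carries a unique relation and that the two classes in the non-Gorenstein case are related exactly as $0 \tri 1$ in $\{0,1\}$.
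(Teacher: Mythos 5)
Your proposal is correct and follows essentially the same route as the paper, which simply observes that this is the case $s=1$, $k_0^\bullet=0$ of Theorem~\ref{thm181230a}; your explicit appeal to Theorem~\ref{tbyttrivial} to identify $\s_0(K_t(Y))$ as the one- or two-element set of trivial classes is exactly the content underlying the $\{0,1\}^e$ description in that theorem. No gap.
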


\begin{proof}
This is the case  
of Theorem~\ref{thm181230a} with $s=1$ and $k_0^\bullet=0$.
\end{proof}

\begin{disc}\label{disc190813b}
It is natural to ask what $\s_0(\sfk[\![Y]\!]/I_t(Y))$ looks like, as is done for $\sfk[\![X]\!]/I_t(X)$ in~\cite{sather:divisor}.
In that work, the fact that $\sfk_t(X)$ satisfies Serre's condition $(R_2)$ is crucial, since it allows one to conclude that 
$\s_0(\sfk[\![X]\!]/I_t(X))\approx\s_0(\sfk_t(X))$; see~\cite[Corollary~3.13]{sather:divisor}.
If $\sfk_t(Y)$ were to satisfy $(R_2)$, then it would similarly hold that $\s_0(\sfk_t[\![Y]\!]/I_t(Y))\approx\s_0(\sfk_t(Y))$, and furthermore, 
it would simplify some of our regular sequence arguments above.
Thus, we pose the following question.
\end{disc}

\begin{question}\label{q190103b}
Under Assumption~\ref{ass190807c},
must $\sfk_t(Y)$ satisfy $(R_2)$?
\end{question}

\appendix
\section{Ladders and Ladder Determinantal Rings} \label{sec181225bx}

Throughout this section, $A$ will be a commutative ring with identity and $Y$ is a ladder, not necessarily path-connected or $t$-connected.
See \cite[pp.~169-170]{SWSeSpP1} and \cite{MR1413891} for the definitions of and notations for properties about ladders.

\

This section concerns general properties of ladder determinantal rings, and in particular, details some isomorphisms between such rings.  We will apply these results in the main body of the paper when we consider  their divisor class groups and semidualizing modules.  

We will recall the description of inside corners according to type. Example \ref{cornertypes} illustrates these various types of corners.

\begin{defn}[\protect{\cite[p.~467]{MR1413891}}] \label{defn:types}
An inside lower corner $S_i'=(a_i,b_i)$ of $Y$ is said to be of {\bf type 1} if the $(t-1)$-minor based on $(a_i,b_i)$ is contained in the ladder $Y$ and contains at most one point of the upper border $C$ of thickness 1; if so, then that point is $(a_i + t - 2,b_i + t - 2)$. If $S_i'$ is not of type 1, then it is of {\bf type 2}.  The definitions of the inside upper corner $T_j'=(c_j,d_j)$ types are analogous. Let $h^*$ be the number of $S_i'$ of type 1 and $k^*$ the number of $T_j'$ of type 1. The ladder $Y$ satisfies \textbf{Assumption~(d)} if for each 
$S_i'=(a_i,b_i)$ the $(t-1)$-minor based on $(a_i,b_i)$ is contained in the ladder $Y$ and contains no point of the upper border $C$.
\end{defn}

\begin{ex} \label{cornertypes} {\bf (Running Example)} Consider $\sfk_3(-)$ for each of the ladders below, and in particular, the 2-minor based at each inside corner $S_1'=(3,3)$.  In the ladders in the left column, (3,3) is a type 1 corner: the 2-minor based on $X_{33}$ is contained in $L_i$ and contains either one point of the border $C$ ($X_{44}$ in $L_1$, indicated with a dotted box), or no points of $C$ ($L_3$); for these examples, we have $h^*=1=k^*$.  The corner (3,3) in each of the ladders in the right column is of type 2.  In $L_2$, the 2-minor based on $X_{33}$ is contained in the ladder, but contains more than one point of $C$ ($X_{34}$ and $X_{44}$), while in $L_4$, the 2-minor  based on $X_{33}$ is not contained in the ladder; for these examples, we have $h^*=0=k^*$.

\begin{center}
    \begin{picture}(290,100)
      \put(17,35){\line(0,1){29}}
      \put(17,64){\line(1,0){42}}
      \put(17,35){\line(1,0){21}}
      \put(59,50){\line(0,1){14}}
      \multiput(38,35)(0,1){15}{\line(0,1){.25}}
      \multiput(38,50)(1,0){21}{\line(1,0){.25}}
      \multiput(38,35)(1,0){22}{\line(1,0){.25}}
      \multiput(59,35)(0,1){15}{\line(0,1){.25}}
      \put(20,85){$X_{13}$}
      \put(40,85){$X_{14}$}
      \put(60,85){$X_{15}$}
      \put(20,70){$X_{23}$}
      \put(40,70){$X_{24}$}
      \put(60,70){$X_{25}$}    
      \put(-20,55){$X_{31}$}
      \put(0,55){$X_{32}$}
      \put(20,55){$X_{33}$}
      \put(40,55){$X_{34}$}
      \put(60,55){$X_{35}$}  
      \put(-20,40){$X_{41}$}
      \put(0,40){$X_{42}$}
      \put(20,40){$X_{43}$}
      \put(40,40){$X_{44}$}
      \put(60,40){$X_{45}$} 
      \put(-20,25){$X_{51}$} 
      \put(0,25){$X_{52}$}
      \put(20,25){$X_{53}$}
      \put(40,25){$X_{54}$}
      \put(3, 10){\text{ Ladder }$L_1$}
      \put(217,35){\line(0,1){29}}
      \put(217,64){\line(1,0){21}}
      \put(217,35){\line(1,0){21}}
      \multiput(259,35)(0,1){30}{\line(0,1){.25}}
      \multiput(238,35)(0,1){30}{\line(0,1){.25}}
      \multiput(238,64)(1,0){22}{\line(0,1){.25}}
      \multiput(238,35)(1,0){22}{\line(0,1){.25}}
      \put(220,85){$X_{13}$}
      \put(240,85){$X_{14}$}
      \put(260,85){$X_{15}$}
      \put(220,70){$X_{23}$}
      \put(240,70){$X_{24}$}
      \put(260,70){$X_{25}$}
      \put(180,55){$X_{31}$}
      \put(200,55){$X_{32}$}
      \put(220,55){$X_{33}$}
      \put(240,55){$X_{34}$}
      \put(260,55){$X_{35}$}
      \put(180,40){$X_{41}$}
      \put(200,40){$X_{42}$}
      \put(220,40){$X_{43}$}
      \put(240,40){$X_{44}$}
      \put(180,25){$X_{51}$}
      \put(200,25){$X_{52}$}
      \put(220,25){$X_{53}$}
      \put(240,25){$X_{54}$}
       \put(200, 10){\text{ Ladder }$L_2$}
 \end{picture}
  \end{center}

\begin{center}
    \begin{picture}(290,100)
      \put(17,35){\line(0,1){29}}
      \put(17,64){\line(1,0){42}}
      \put(17,35){\line(1,0){42}}
      \put(59,35){\line(0,1){29}}
      \put(20,85){$X_{13}$}
      \put(40,85){$X_{14}$}
      \put(60,85){$X_{15}$}
      \put(20,70){$X_{23}$}
      \put(40,70){$X_{24}$}
      \put(60,70){$X_{25}$}
      \put(-20,55){$X_{31}$}    
      \put(0,55){$X_{32}$}
      \put(20,55){$X_{33}$}
      \put(40,55){$X_{34}$}
      \put(60,55){$X_{35}$}  
      \put(-20,40){$X_{41}$}
      \put(0,40){$X_{42}$}
      \put(20,40){$X_{43}$}
      \put(40,40){$X_{44}$}
      \put(60,40){$X_{45}$}  
      \put(-20,25){$X_{51}$}
      \put(0,25){$X_{52}$}
      \put(20,25){$X_{53}$}
      \put(40,25){$X_{54}$}
      \put(60,25){$X_{55}$}
      \put(-20,10){$X_{61}$}
      \put(0,10){$X_{62}$}
      \put(20,10){$X_{63}$}
      \put(40,10){$X_{64}$}
       \put(3, -5){\text{ Ladder }$L_3$}
      \put(220,85){$X_{13}$}
      \put(240,85){$X_{14}$}
      \put(260,85){$X_{15}$}
      \put(220,70){$X_{23}$}
      \put(240,70){$X_{24}$}
      \put(260,70){$X_{25}$}
      \put(180,55){$X_{31}$}
      \put(200,55){$X_{32}$}
      \put(220,55){$X_{33}$}
      \put(240,55){$X_{34}$}
      \put(260,55){$X_{35}$}
      \put(180,40){$X_{41}$}
      \put(200,40){$X_{42}$}
      \put(220,40){$X_{43}$}
      \put(180,25){$X_{51}$}
      \put(200,25){$X_{52}$}
      \put(220,25){$X_{53}$}
       \put(200,0){\text{ Ladder }$L_4$}
 \end{picture}
  \end{center}
\end{ex}
\smallskip

The next result allows us to work with ladders that are not $t$-connected.
Essentially, the sub-ladders $Y_i$ are $t$-connected components of $Y$.

\begin{lem}
\label{lem190402a}
There are sub-ladders $Y_1,\ldots,Y_s$ of $Y$ such that
\begin{enumerate}[\rm(1)]
\item\label{lem190402a1}
each ladder $Y_i$ is non-empty and $t$-connected;
\item\label{lem190402a2}
$Y$ is the disjoint union  $Y_1\cup\cdots\cup Y_s$; 
and
\item\label{lem190402a3}
every $t$-minor of $Y$ is contained in one of the $Y_i$.
\end{enumerate}
\end{lem}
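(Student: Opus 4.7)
The plan is to construct the $Y_i$ as the classes of a natural equivalence relation on $Y$. Declare two entries $X_{ij}, X_{pq}\in Y$ equivalent if there is a finite chain of $t$-minors $M_1,\ldots,M_\ell$ of $Y$ with $X_{ij}\in M_1$, $X_{pq}\in M_\ell$, and consecutive $M_r, M_{r+1}$ sharing at least one entry; augment this by making each entry of $Y$ not lying in any $t$-minor equivalent only to itself. This is visibly an equivalence relation, and I take $Y_1,\ldots,Y_s$ to be its classes. Non-emptiness is immediate, and disjointness together with $Y = Y_1\cup\cdots\cup Y_s$ is automatic from the partition property.

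Next I would check properties \eqref{lem190402a3} and (most of) \eqref{lem190402a1}. Property \eqref{lem190402a3} holds because any single $t$-minor $M$ has all of its $t^2$ entries pairwise equivalent (taking the trivial chain $M_1=M$), so $M\subseteq Y_i$ for the unique class $Y_i$ containing any one of its entries. The $t$-connectedness of each $Y_i$ is essentially by construction: given two $t$-minors $M,M'\subseteq Y_i$, pick $a\in M$ and $a'\in M'$; since $a\sim a'$, the defining chain for that equivalence provides a sequence of overlapping $t$-minors linking $M$ to $M'$, which is exactly what $t$-connectedness of $Y_i$ demands.

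The main obstacle is verifying that each $Y_i$ inherits the ladder axiom, i.e.\ if $X_{ij}, X_{pq}\in Y_i$ with $i\le p$ and $j\le q$, then $X_{iq}, X_{pj}\in Y_i$. The ladder property of $Y$ places the two corner entries in $Y$; the issue is showing they land in the same component. I would argue by induction on the length of a connecting chain between $X_{ij}$ and $X_{pq}$. In the base case where both lie in a single $t$-minor $M$ of $Y$, the $t\times t$ row/column index structure of $M$ forces all four entries $X_{ij}, X_{iq}, X_{pj}, X_{pq}$ to lie in $M$, so all are equivalent. For the inductive step, one picks an intermediate entry $X_{rs}$ shared by two consecutive $t$-minors in the chain and compares the rectangles $\{i,r\}\times\{j,s\}$ and $\{r,p\}\times\{s,q\}$ (adjusted by the ladder axiom so that all eight relevant corners lie in $Y$), producing $t$-minors of $Y$ that witness $X_{iq}, X_{pj}\in Y_i$.

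The delicate point is the inductive step: one must exhibit explicit $t$-minors of $Y$ containing the new corner entries $X_{iq}$ and $X_{pj}$ and show they overlap a $t$-minor already in the component. The ladder axiom of $Y$ is used at every rectangle-completion, while the existence of a $t$-minor covering any such rectangle follows from the fact that $t$-minors are available in any $t$-thick region of $Y$. This is a purely combinatorial argument about how rectangular $t\times t$ submatrices interlock inside a ladder, and it is exactly the hypothesis that $Y$ satisfies the rectangle axiom that makes the chain of rectangles close up inside $Y_i$.
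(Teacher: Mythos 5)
Your construction via chain-connectivity classes is a genuinely different (and much heavier) route than the one the paper takes, and it has a real gap exactly at the point you flag as ``the main obstacle.'' The paper's proof is a two-line induction on $|Y|$: by \emph{definition}, a $t$-disconnected ladder admits a decomposition into two non-empty disjoint sub-ladders $Y'$ and $Y''$ such that every $t$-minor lies in one of them, so one either stops (if $Y$ is $t$-connected) or splits and applies the inductive hypothesis to the strictly smaller pieces. In particular, the fact that the pieces are sub-ladders is part of the definition of a $t$-disconnection and never needs to be verified. Your approach, by contrast, must \emph{prove} that each equivalence class satisfies the rectangle axiom, and this is precisely the step you do not carry out: the inductive step is only described (``one picks an intermediate entry \dots producing $t$-minors of $Y$ that witness $X_{iq},X_{pj}\in Y_i$''), and it leans on the assertion that ``$t$-minors are available in any $t$-thick region of $Y$,'' which does not apply here --- the rectangles $\{i,r\}\times\{j,s\}$ you form are $2\times 2$ sets of positions, not $t\times t$ blocks, and there is no a priori reason the corner entries $X_{iq}$, $X_{pj}$ lie in \emph{any} $t$-minor of $Y$ (entries of a ladder can sit in regions of thickness $<t$; under your relation such an entry is a singleton class, which would immediately violate the ladder axiom for $Y_i$). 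Until you rule this out, the claim that each class is a sub-ladder is unproved, and the lemma does not follow.

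Two smaller points. First, your verification of condition (1) silently replaces the definition of $t$-connectedness used in the cited sources (non-existence of a $t$-disconnection into sub-ladders) with chain-connectivity of minors; these agree for classes that are sub-ladders, but that equivalence should be argued, not asserted. Second, even if completed, your decomposition declares every minor-free entry a singleton component, which is consistent with the statement of the lemma but is doing far more work than needed. I recommend discarding the equivalence-relation construction and simply unwinding the definition of $t$-disconnected together with induction on $|Y|$, as the paper does.
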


\begin{proof}
Induct on $|Y|$. 
If $Y$ is $t$-connected, then we are done with $s=1$.
Thus, assume that $Y$ is $t$-disconnected.  This implies that there are non-empty sub-ladders $Y'$ and $Y''$ of $Y$ such that
$Y$ is the disjoint union  $Y'\cup Y''$ and
every $t$-minor of $Y$ is contained in $Y'$ or $Y''$. Now apply the induction hypothesis to $Y'$ and $Y''$.
\end{proof}

\begin{rmk} 
The subladders $Y_i$ in Lemma~\ref{lem190402a} are unique, as one can show, so we refer to them
as the {\bf $\pmb{t}$-components} of the ladder $Y$.
\end{rmk}

Because of the next result, below we are able to use Fact~\ref{lem190103a} to find the semidualizing modules for $A_t(Y)$ when $A$ is a normal domain.

\begin{lem} \label{normaldomain} 
If $A$ is a normal domain, then so is $A_t(Y)$.
\end{lem}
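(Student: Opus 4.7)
The plan is to reduce the statement to the classical case of field coefficients, using the flatness of the ladder determinantal construction, and then apply Serre's criterion for normality.

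First, I would establish that the structure map $A \to A_t(Y)$ is flat. The standard-monomial theory for ladder determinantal ideals yields, for any commutative ring $R$, an $R$-basis of $R_t(Y) = R[Y]/I_t(Y)$ consisting of standard monomials, and the basis is independent of $R$; in particular $R_t(Y)$ is $R$-free. Taking $R = D$, it follows that $D \to D_t(Y)$ is free, hence flat. Since $D \to A$ is flat by Remark~\ref{disc181224a}, the base-change isomorphism $A_t(Y) \cong A \otimes_D D_t(Y)$ from Example~\ref{ex181224c} shows that $A \to A_t(Y)$ is flat.

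Next, I would verify that $A_t(Y)$ is an integral domain. The ring $K_t(Y) \cong A_t(Y) \otimes_A K$ is a Cohen--Macaulay normal domain by the classical theorems of Herzog--Trung and Conca for ladder determinantal rings over a field; see~\cite{MR1413891}. Because $A_t(Y)$ is $A$-flat and $A$ is a domain, $A_t(Y)$ is $A$-torsion-free, so the natural localization map $A_t(Y) \to K_t(Y)$ is injective. Thus $A_t(Y)$ embeds into a domain and is itself a domain.

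Finally, I would invoke Serre's criterion: a noetherian ring is normal if and only if it satisfies $(R_1)$ and $(S_2)$. These properties ascend along a flat ring homomorphism provided they hold for the base and for every fiber. The base $A$ is normal by hypothesis, so satisfies both. The fiber of $A \to A_t(Y)$ over $\fp \in \Spec(A)$ is $A_t(Y) \otimes_A \kappa(\fp) \cong \kappa(\fp)_t(Y)$, which is normal by the field case just cited. Hence $A_t(Y)$ satisfies $(R_1)$ and $(S_2)$, and is therefore a normal domain. The main technical ingredient is the ascent of Serre's conditions along flat morphisms with normal fibers, which is a standard result (\emph{cf.}\ Matsumura, \emph{Commutative Ring Theory}, Theorem~23.9); the remaining ingredients, the $D$-freeness of $D_t(Y)$ from standard monomial theory and the normality of $\sfk_t(Y)$ over a field, are well established in the literature.
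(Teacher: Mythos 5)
Your argument is correct and is, in substance, the same as the paper's: the paper's base case cites Conca for normality of $\sfk_t(Y)$ over a field and then invokes \cite[Proposition~3.12]{bruns:dr}, which is exactly the machinery you unfold by hand (flatness of $A\to A_t(Y)$ via the $\bbz$- or $D$-freeness of the generic object, integrality via the embedding into $K_t(Y)$, and ascent of $(R_1)$ and $(S_2)$ along a flat map with normal base and normal fibers). Unpacking the citation is harmless; the one point you gloss over is that the literature results you lean on (Herzog--Trung, Conca) establish normality of $\sfk_t(Y)$ only for $t$-connected ladders, whereas the lemma allows $Y$ to be $t$-disconnected. The paper closes this gap by inducting on the $t$-components $Y_1,\dots,Y_s$, using $A_t(Y)\cong[A_t(Y_1)]_t(Y')$ so that the $t$-connected case (over a normal domain, not just a field) can be applied repeatedly. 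Your proof would need the analogous step: either insert the same induction, or argue directly that for a $t$-disconnected $Y$ the fiber $\kappa(\fp)_t(Y)\cong\kappa(\fp)_t(Y_1)\otimes_{\kappa(\fp)}\cdots\otimes_{\kappa(\fp)}\kappa(\fp)_t(Y_s)$ is still a normal domain, which is not automatic for tensor products of normal domains over a non--algebraically-closed field and so requires a word of justification (e.g., geometric normality and integrality of ladder determinantal rings, or again the iterated construction $[\kappa(\fp)_t(Y_1)]_t(Y')$). With that one addition your proof is complete.
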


\begin{proof}
Let $Y_1,\ldots,Y_s$ be the $t$-components of $Y$. We induct on $s$.  When $s=1$, $Y$ is $t$-connected. By \cite[Proposition 3.3]{Co}, $R = \mathsf k_t(Y)$ is a normal domain. Apply \cite[Proposition 3.12]{bruns:dr}: since $S=\bbz_t(Y)$ is faithfully flat over $\mathbb Z$ and $R \cong S \otimes_{\mathbb Z} \sfk$ is a normal domain, $A_t(Y)$ is a normal domain since $A$ is a normal domain.

For the inductive step, note that if $Y'=Y\ssm Y_1$, then $A_t(Y)\cong A_t(Y_1)\otimes_A A_t(Y')\cong [A_t(Y_1)]_t(Y')$. 
By the base case, the ring $A_t(Y_1)$ is a normal domain, hence it follows, by induction, that the same is true for $A_t(Y)\cong [A_t(Y_1)]_t(Y')$.
\end{proof}

The result below is a basic tool for the proof of Lemma~\ref{lem190329b}. It can be stated in much more generality than stated here, but this 
is the
version that we need. 

\begin{lem}\label{lem190329a}
Let $\phi\colon L\to M$ be a homomorphism between torsion-free abelian groups, i.e., between flat $\bbz$-modules.
The following conditions are equivalent:
\begin{enumerate}[\rm(i)]
\item \label{lem190329a1}
$\phi$ is an isomorphism;
\item \label{lem190329a2'}
for each ring $A$, the map $\phi\otimes_{\bbz}A$ is an isomorphism; 
\item \label{lem190329a2}
for each field $\sfk$, the map $\phi\otimes_{\bbz}\sfk$ is an isomorphism; and
\item \label{lem190329a3}
the map $\phi\otimes_{\bbz}\bbq$ is an isomorphism and, for each prime number $p$, the map $\phi\otimes_{\bbz}(\bbz/p\bbz)$ is an isomorphism.
\end{enumerate}
\end{lem}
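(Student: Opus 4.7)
The plan is to prove the implications cyclically as $(i)\Rightarrow(ii)\Rightarrow(iii)\Rightarrow(iv)\Rightarrow(i)$. The first three implications are immediate: base-changing an isomorphism produces an isomorphism, every field is a ring, and $\bbq$ together with each $\bbz/p\bbz$ is a field. The entire substance therefore lies in $(iv)\Rightarrow(i)$.

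For $(iv)\Rightarrow(i)$, I would set $K=\ker\phi$ and $C=\coker\phi$ and consider the four-term exact sequence $0\to K\to L\xra{\phi}M\to C\to 0$. First, tensoring with the flat $\bbz$-module $\bbq$ and using that $\phi\otimes_\bbz\bbq$ is an isomorphism gives $K\otimes_\bbz\bbq=0=C\otimes_\bbz\bbq$. Since $K$ is a subgroup of the torsion-free group $L$, it is itself torsion-free; so $K\otimes_\bbz\bbq=0$ forces $K=0$, i.e., $\phi$ is injective.

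With $\phi$ injective, I would tensor the short exact sequence $0\to L\to M\to C\to 0$ with $\bbz/p\bbz$ and invoke the flatness of $L$ and $M$ to obtain the exact sequence
\[
0\to\tor_1^{\bbz}(C,\bbz/p\bbz)\to L/pL\to M/pM\to C/pC\to 0.
\]
Since $\phi\otimes_\bbz\bbz/p\bbz$ is an isomorphism, the middle map is injective, forcing $\tor_1^{\bbz}(C,\bbz/p\bbz)=0$ for every prime $p$. Resolving $\bbz/p\bbz$ via $0\to\bbz\xra{p}\bbz\to\bbz/p\bbz\to 0$ identifies $\tor_1^{\bbz}(C,\bbz/p\bbz)$ with the $p$-torsion subgroup of $C$; hence $C$ has no $p$-torsion for any prime $p$, so $C$ is torsion-free. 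Combining this with $C\otimes_\bbz\bbq=0$ and the canonical injection $C\hookrightarrow C\otimes_\bbz\bbq$ for torsion-free abelian groups yields $C=0$, and thus $\phi$ is an isomorphism.

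The only step requiring even a moment's thought is identifying $\tor_1^{\bbz}(C,\bbz/p\bbz)$ with the $p$-torsion of $C$; everything else is routine homological bookkeeping, so no serious obstacle is expected. Note, in particular, that no structure theorem for divisible groups is needed along this route, since the vanishing of the relevant $\tor_1$ already yields torsion-freeness directly.
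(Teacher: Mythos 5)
Your proof is correct, but it takes a genuinely different route from the paper's. The paper forms the mapping cone $X=\cone(\phi)=(0\to L\xra{\phi}M\to 0)$, observes that $X\otimes_{\bbz}\kappa(\p)\cong\cone(\phi\otimes_{\bbz}\kappa(\p))$ is exact for every prime ideal $\p\subseteq\bbz$, and then invokes Foxby's lemma on bounded complexes of flat modules (\cite[Lemma~2.6]{foxby:bcfm}) to conclude that $X$ itself is exact, i.e., that $\phi$ is an isomorphism. You instead give a self-contained elementary argument: kill the kernel by tensoring the four-term sequence with the flat module $\bbq$ and using that subgroups of torsion-free groups are torsion-free; then, from the resulting short exact sequence, extract $\tor_1^{\bbz}(C,\bbz/p\bbz)=0$ for all $p$ (note only the flatness of $M$ is actually needed there), identify this $\tor$ with the $p$-torsion of $C$ via the resolution $0\to\bbz\xra{p}\bbz\to\bbz/p\bbz\to 0$, and finish with $C\into C\otimes_{\bbz}\bbq=0$. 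What the paper's approach buys is brevity and a statement that generalizes immediately to bounded complexes of flat modules over any commutative noetherian base (which is the form in which the cited lemma is used elsewhere in the literature); what yours buys is independence from that citation, at the cost of being specific to $\bbz$ and to a two-term complex. Both arguments are complete and correct.
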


\begin{proof}
The implications \eqref{lem190329a1}$\implies$\eqref{lem190329a2'}$\implies$\eqref{lem190329a2}$\implies$\eqref{lem190329a3} are routine.

\eqref{lem190329a3}$\implies$\eqref{lem190329a1}
Assume that the map $\phi\otimes_{\bbz}\bbq$ is an isomorphism and, for each prime number $p$, the map $\phi\otimes_{\bbz}(\bbz/p\bbz)$ is an isomorphism.
In other words, for each prime ideal $\p\subseteq\bbz$, the map $\phi\otimes_{\bbz}\kappa(\p)$ is an isomorphism where $\kappa(\p)$ is the field of fractions of $\bbz/\p$.
Consider the bounded chain complex of flat $\bbz$-modules
$$X=\cone(\phi)=(0\to L\xra\phi M\to 0)$$
which satisfies $X\otimes_{\bbz}\kappa(\p)\cong\cone(\phi\otimes_{\bbz}\kappa(\p))$.
The fact that $\phi\otimes_{\bbz}\kappa(\p)$ is an isomorphism for all $\p$ implies that $X\otimes_{\bbz}\kappa(\p)$ is exact for all $\p$. 
Since $X$ is a bounded complex of flat $\bbz$-modules, it follows from~\cite[Lemma~2.6]{foxby:bcfm} that $X$ is exact, that is, that $\phi$ is an
isomorphism.
\end{proof}

The following notation, included for convenience, was introduced in~\cite[p.~462]{MR1413891}.

\begin{notn}\label{notn190329a}
For $i=1,\ldots,h+1$ let $F_i\in A[Y]$ denote the $(t-1)$-minor based on the outside lower corner $S_i$ of $Y$, and let $f_i\in A_t(Y)$ be the residue of $F_i$. 
Set $F=F_1\cdots F_{h+1}$ and $f=f_1\cdots f_{h+1}$. 
For $j=1,\ldots,k+1$ let $G_j\in A[Y]$ denote the $(t-1)$-minor based on the outside upper corner $T_j$ of $Y$, and let $g_j\in A_t(Y)$ be the residue of $G_j$. 
Set $G=G_1\cdots G_{k+1}$ and $g=g_1\cdots g_{k+1}$. 
\end{notn}

Much of our work is based on careful localization. We describe our tools for this next, 
beginning with a version of~\cite[Lemma~4.1]{MR1413891} with fewer restrictions on $A$.

\begin{lem}\label{lem190329b}
Assume that $Y$ is $t$-connected.
Let $B$ be the set of points of the lower border with thickness $(t-1)$ of $Y$, and
let $C$ be the set of points of the upper border with thickness $(t-1)$ of $Y$.
Set $Y_1=\{P\in Y\mid P\preccurlyeq(a_1,b_1+t-2)\}$ and  
 $Y_2=\{P\in Y\mid P\preccurlyeq(c_1-t+2,d_1)\}$, and set $B_1=B\ssm Y_1$ and $C_1=C\ssm Y_2$.
Then one has isomorphisms of localizations
\begin{align*}
A_t(Y)_{f_1}
&\cong A_t(Y_1)[B_1]_{f_{11}}
&
A_t(Y)_{g_1}
&\cong A_t(Y_2)[C_1]_{g_{11}}
&
A_t(Y)_f
&\cong A[B]_F
\end{align*}
where $f_{11}$ is the residue of $F_1$ in $A_t(Y_1)[B_1]$
and $g_{11}$ is the residue of $G_1$ in $A_t(Y_2)[C_1]$.
\end{lem}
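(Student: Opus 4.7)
The plan is to reduce the three isomorphisms to the already-established case where the coefficient ring is a field, namely \cite[Lemma~4.1]{MR1413891}, via base change from $\bbz$.

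First, I would construct the three natural $A$-algebra homomorphisms going from right to left. For the first, the inclusion of ladders $Y_1 \subseteq Y$ induces a ring map $A_t(Y_1) \to A_t(Y)_{f_1}$, and sending each variable $X_{ij} \in B_1$ to its residue class in $A_t(Y)_{f_1}$ extends this to a homomorphism $A_t(Y_1)[B_1] \to A_t(Y)_{f_1}$. Under this map $f_{11}$ is sent to the unit $f_1$, so the map extends to $A_t(Y_1)[B_1]_{f_{11}} \to A_t(Y)_{f_1}$. The other two maps are built analogously. For the third isomorphism, one verifies directly that after inverting $F$ every variable of $Y\setminus B$ can be solved for in $A[B]_F$ via a cofactor expansion along one of the $F_i$.

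Second, each of these three constructions is manifestly compatible with base change from $\bbz$: for every ring $A$ the map for $A$ is obtained from the one for $\bbz$ by tensoring with $A$ over $\bbz$. Since tensoring preserves isomorphisms, it suffices to prove the three maps are isomorphisms when $A=\bbz$.

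Third, I would invoke Lemma~\ref{lem190329a}. Both source and target of each map are $\bbz$-flat, since $\bbz_t(Y)$ is a free $\bbz$-module by the characteristic-free standard monomial theory for ladder determinantal ideals, and polynomial extensions together with localizations preserve flatness. By the equivalence \eqref{lem190329a1}$\Leftrightarrow$\eqref{lem190329a2} of Lemma~\ref{lem190329a}, it then suffices to verify that tensoring with every field $\sfk$ yields an isomorphism. For such $\sfk$, the resulting map is precisely Conca's isomorphism \cite[Lemma~4.1]{MR1413891}, completing the argument.

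The main obstacle is the flatness of $\bbz_t(Y)$ over $\bbz$, which is needed to apply Lemma~\ref{lem190329a}; while this is standard, we may wish to cite a reference (or deduce it from the fact that ladder determinantal ideals admit a universal Gr\"obner basis whose leading monomials are independent of $A$). The secondary technicality is verifying that $A_t(Y_1) \to A_t(Y)_{f_1}$ is well-defined, i.e., that the generators of $I_t(Y_1)$ map to zero, which follows because every $t$-minor lying in $Y_1$ is already a $t$-minor of $Y$.
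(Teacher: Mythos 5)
Your proposal is correct and follows essentially the same route as the paper: take the natural map $A_t(Y_1)[B_1]_{f_{11}}\to A_t(Y)_{f_1}$ coming from Conca's construction, observe it is the base change along $\bbz\to A$ of the corresponding map over $\bbz$ between flat $\bbz$-modules, and apply Lemma~\ref{lem190329a} together with the field case from \cite[Lemma~4.1]{MR1413891}. The only cosmetic difference is that you justify $\bbz$-flatness via standard monomial theory, whereas the paper gets it from the torsion-freeness observation in Remark~\ref{disc181224a}.
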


\begin{proof}
We verify the first isomorphism;
the others are verified similarly. 

The case where $A$ is a field is covered in~\cite[Lemma~4.1]{MR1413891}.
Moreover, the proof of~\cite[Lemma~4.1]{MR1413891} provides a natural map
\begin{align*}
A_t(Y_1)[B_1]_{f_{11}}
&\xra{\psi_A} A_t(Y)_{f_1}
\end{align*}
such that $\psi_{\sfk}$ is an isomorphism for each field $\sfk$.
Consider the map
$$\bbz_t(Y_1)[B_1]_{f_{11}}
\xra{\psi_\bbz} \bbz_t(Y)_{f_1}
$$
between flat $\bbz$-modules. It is straightforward to show that $\psi_{\sfk}=\psi_{\bbz}\otimes_{\bbz}\sfk$;
since this map is an isomorphism for each $\sfk$, Lemma~\ref{lem190329a} implies that 
$\psi_{A}=\psi_{\bbz}\otimes_{\bbz}A$ is an isomorphism for all $A$.
\end{proof}

Next, we remove the $t$-connected hypothesis for the isomorphism we use below.

\begin{lem}\label{lem190329bxxx}
Let $B$ be the set of points of the lower border with thickness $(t-1)$ of $Y$.
Then one has
$A_t(Y)_f
\cong A[B]_F$.
\end{lem}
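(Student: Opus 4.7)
The plan is to induct on the number $s$ of $t$-components of $Y$ (which exist by Lemma~\ref{lem190402a}), reducing to the $t$-connected case supplied by the last isomorphism of Lemma~\ref{lem190329b}. The base case $s=1$ is exactly Lemma~\ref{lem190329b}, so nothing new is needed there.

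For the inductive step, fix the $t$-components $Y_1,\ldots,Y_s$ of $Y$ with $s>1$, and set $Y'=Y\ssm Y_1$, which has $s-1$ $t$-components. Since $Y_1$ and $Y'$ involve disjoint sets of variables and by Lemma~\ref{lem190402a}\eqref{lem190402a3} every $t$-minor of $Y$ lies in $Y_1$ or in $Y'$, the ideal decomposes as $I_t(Y)=I_t(Y_1)+I_t(Y')$ inside $A[Y]\cong A[Y_1]\otimes_A A[Y']$, yielding $A_t(Y)\cong A_t(Y_1)\otimes_A A_t(Y')$.

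Next I would verify that the combinatorial data entering the statement behaves additively under the $t$-component decomposition: the outside lower corners of $Y$ partition into those of $Y_1$ and those of $Y'$, and the set of points of the $(t-1)$-thick lower border of $Y$ is the disjoint union of the analogous sets for each component. Writing $B_{Y_1},F_{Y_1},f_{Y_1}$ for the data associated to $Y_1$ and $B_{Y'},F_{Y'},f_{Y'}$ for the data associated to $Y'$, we therefore have $B=B_{Y_1}\sqcup B_{Y'}$, $F=F_{Y_1}F_{Y'}$, and $f=f_{Y_1}f_{Y'}$. Combining this with the tensor decomposition above and using that localization commutes with tensor product,
$$A_t(Y)_f\cong A_t(Y_1)_{f_{Y_1}}\otimes_A A_t(Y')_{f_{Y'}}\cong A[B_{Y_1}]_{F_{Y_1}}\otimes_A A[B_{Y'}]_{F_{Y'}}\cong A[B]_F,$$
where the middle isomorphism invokes Lemma~\ref{lem190329b} on the $t$-connected ladder $Y_1$ and the inductive hypothesis on $Y'$; the last isomorphism uses that $B_{Y_1}$ and $B_{Y'}$ are disjoint variable sets so the tensor product of polynomial localizations is the polynomial localization on the union.

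The main obstacle is the combinatorial bookkeeping described just above: one must confirm that no outside lower corner of $Y$ straddles two $t$-components and that the thickness-$(t-1)$ condition for the lower border is determined locally within a single component. This follows quickly from the disjointness of the $t$-components as subsets of $X$, since any configuration forcing a lower corner or a border-thickness condition involves a small block of entries which, in a $t$-disconnected ladder, must lie entirely within one component; nevertheless, it deserves to be stated cleanly in the actual write-up before the inductive telescoping is invoked.
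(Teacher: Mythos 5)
Your proposal is correct and follows essentially the same route as the paper: induction on the number $s$ of $t$-components, with base case Lemma~\ref{lem190329b}, splitting off $Y_1$, decomposing $f=f_{(1)}\tilde f$ and $B=B_1\sqcup\tilde B$ along the $t$-disconnection, and combining the base case for $Y_1$ with the inductive hypothesis for $Y\ssm Y_1$. The paper phrases the combination as an iterated localization of the iterated construction $(A_t(Y_1))_t(\tilde Y)$ rather than as a tensor product of localizations over $A$, but these are the same isomorphism.
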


\begin{proof}
Let $Y_1,\ldots,Y_s$ be the $t$-components of $Y$. 
We induct on $s$, where the base case $s=1$ follows from Lemma~\ref{lem190329b}.
Re-order the $Y_i$ if necessary to assume without loss of generality that $X_{1,n}\in Y_1$, and set $\wti Y=Y\ssm Y_1$.
Set $$f_{(1)}=\prod_{X_{S_j}\in Y_1}f_j\qquad \text{and} \qquad\wti f=f/f_{(1)}=\prod_{X_{S_j}\in Y'}f_j$$
Also, set $B_1=B\cap Y_1$ and $\wti B=B\ssm B_1$. Since $Y_1$ is part of a $t$-disconnection of $Y$,
it follows that $B_1$ is the set of points of the lower border with thickness $(t-1)$ of $Y_1$
and $\wti B$ is the set of points of the lower border with thickness $(t-1)$ of $\wti Y$.
Thus, our base case and inductive assumption explain the fourth isomorphism in the following display.
\begin{align*}
A_t(Y)_f
&\cong (A_t(Y_1))_t(\wti Y)_{f_{(1)}\wti f}
\cong [(A_t(Y_1))_t(\wti Y)_{f_{(1)}}]_{\wti f}
\cong (A_t(Y_1)_{f_{(1)}})_t(\wti Y)_{\wti f}\\
&\cong (A[B_1]_{F_{(1)}})[\wti B]_{\wti F}
\cong [(A[B_1])[\wti B]_{F_{(1)}}]_{\wti F}
\cong A[B_1\cup \wti B]_{F_{(1)}\wti F}
\cong A[B]_F
\end{align*}
The first isomorphism is from the fact that $Y_1$ forms part of a $t$-disconnection of $Y$, along with the definitions above.
The remaining steps are straightforward.
\end{proof}

\begin{assumption}\label{ass190715a}
Assume 
that $Y$ is $t$-connected and $t>2$.
Let $B_1$ be the set of points of $Y$ of the lower border with thickness 1, and set
$Z=Y \setminus B_1$.
Also, set $x=x_{S_1}x_{S_2}\cdots x_{S_{h+1}}\in \sfk_t(Y)$
and $\mfx=X_{S_1}X_{S_2}\cdots X_{S_{h+1}}\in \sfk_t[Y]$.
(We use $\mfx$ here instead of $X$ since $X$ is the $m\times n$ matrix of variables containing our ladder $Y$.)
See Remark~\ref{disc190808a} for a discussion of the connectedness properties of $Z$ and its corners.
\end{assumption}

Our next goal is to prove Proposition~\ref{prop:invertx} which, for our $t$-connected ladder $Y$, gives an isomorphism
$$
\sfk_t(Y)_x \cong \sfk_{t-1}(Z)[B_1]_{\mfx}.$$
We  prove this result by a series of lemmas.
The proof outline is similar to that of \cite[Proposition 4.1 (2)]{Co}, except that
in the ladder $Z$, we do not relabel the variables after we delete $B_1$ from $Y$.
In pursuit of the isomorphism above, we define endomorphisms $\psi,\chi$ on $\sfk[Y]_{\mfx}$ as follows.

\begin{defn} \label{defn:psichi}
Continue with Assumption~\ref{ass190715a}.
   Let $X_{ij} \in Y$, for a fixed pair $(i,j)$,
  and let $U=U(i,j)$ be the interval $\{w \in \bbn \mid i>a_{w-1} \text{ and }
  j>b_w\}$. Define $\psi\colon\sfk[Y]_{\mfx}\to \sfk[Y]_{\mfx}$ by
  \begin{align*}
    \psi(X_{ij}) &= X_{ij} + \sum_{\substack{r\geq 1 \\ \{u_1 < u_2 < \dots < u_r\} \subseteq U}}
    \frac{X_{a_{u_1-1},j}X_{a_{u_2-1},b_{u_1}}\cdots X_{a_{u_r-1},b_{u_{r-1}}}
    X_{i,b_{u_r}}}{X_{a_{u_1-1},b_{u_1}}X_{a_{u_2-1},b_{u_2}}\cdots
    X_{a_{u_r-1},b_{u_r}}}\\
    &= X_{ij} + \sum_{\substack{r\geq 1 \\ \{u_1 < u_2 < \dots < u_r\} \subseteq U}}
    \frac{X_{a_{u_1-1},j}X_{a_{u_2-1},b_{u_1}}\cdots X_{a_{u_r-1},b_{u_{r-1}}}
    X_{i,b_{u_r}}}{X_{S_{u_1}}X_{S_{u_2}}\cdots X_{S_{u_r}}}.
  \end{align*}
In particular, we have $\psi(X_{ij})=X_{ij}$ for all $X_{ij} \in B_1$ since $U(i,j) = \varnothing$ for these variables.  Similarly, we define $\chi\colon \sfk[Y]_{\mfx}\to\sfk[Y]_{\mfx}$ by
  \[
    \chi(X_{ij}) = X_{ij} + \sum_{\substack{r\geq 1 \\ \{u_1 < u_2 < \dots < u_r\} \subseteq U}}
    (-1)^r \frac{X_{a_{u_1-1},j}X_{a_{u_2-1},b_{u_1}}\cdots
    X_{a_{u_r-1},b_{u_{r-1}}}X_{i,b_{u_r}}}
    {X_{S_{u_1}}X_{S_{u_2}}\cdots X_{S_{u_r}}}.
  \]
 \end{defn}

To provide some clarity, we include an example.

\begin{ex} In ladder $L_3$ of Example \ref{cornertypes}, if $(i,j) = (2,4)$, then $U(2,4) = \{1\}$ since $a_0 = 1$ and $b_1 = 3$; hence $\psi(X_{24}) = X_{24} + \frac{X_{14}X_{23}}{X_{13}}$. For $(i,j) = (4,5)$, we have $U(4,5) = \{1,2\}$, hence we have the distinct chains 
$\{1\}, \{2\}, \{1 < 2\}$ in $U$.  Thus, $$\psi(X_{45}) = X_{45} + \frac{X_{15}X_{43}}{X_{13}} + \frac{X_{35}X_{41}}{X_{31}} + \frac{X_{15}X_{33}X_{41}}{X_{13}X_{31}}.$$
\end{ex}

\begin{lem}\label{lem190807a}
The maps $\psi,\chi$ from Definition~\ref{defn:psichi} are inverses of each other.
\end{lem}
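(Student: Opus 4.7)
The plan is to reduce to a combinatorial identity on each generator. Since $\psi$ and $\chi$ are $\sfk$-algebra endomorphisms of $\sfk[Y]_{\mathfrak{x}}$, it suffices to verify $\psi(\chi(X_{ij})) = X_{ij}$ for every variable $X_{ij}$; the identity $\chi(\psi(X_{ij})) = X_{ij}$ is then completely symmetric. The first observation I would record is that both maps fix each variable $X_{S_u} = X_{a_u,b_u}$ appearing in the denominators, because $X_{S_u} \in B_1$ forces $U(a_u,b_u) = \varnothing$; in particular, the denominators $X_{S_{u_k}}$ can be pulled outside any application of $\psi$.

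Using this, I would expand
\[
\psi(\chi(X_{ij})) = \sum_{r \geq 0} \sum_{\{u_1 < \cdots < u_r\} \subseteq U(i,j)} (-1)^r \frac{\psi(X_{a_{u_1-1},j}) \, \psi(X_{a_{u_2-1},b_{u_1}}) \cdots \psi(X_{i,b_{u_r}})}{X_{S_{u_1}} \cdots X_{S_{u_r}}},
\]
and then expand each factor $\psi(X_{p,q})$ as its own sum over chains in $U(p,q)$. The result is a signed sum indexed by pairs $(\tau, (\sigma_0, \ldots, \sigma_r))$, where $\tau = (u_1 < \cdots < u_r) \subseteq U(i,j)$ and each $\sigma_k$ is a (possibly empty) chain in the $U$-set of the $k$-th numerator variable.

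The main step, and the heart of the argument, is to exhibit a sign-reversing, monomial-preserving involution on the non-trivial such pairs, whose only fixed point is the totally empty pair contributing $X_{ij}$. I would define it by locating the smallest index $w$ in the union $\tau \cup \sigma_0 \cup \cdots \cup \sigma_r$ and toggling its membership: if $w \in \sigma_k$, remove it from $\sigma_k$ and insert it into $\tau$ in the correct position; if $w \in \tau$, perform the reverse move. The key combinatorial identity that must be verified is that the numerator factor $X_{a_{u_k-1}, q_{k-1}}$ in $M_\tau$, after inserting an index $w$ between $u_{k-1}$ and $u_k$, decomposes precisely as the $\{w\}$-chain contribution to $\psi(X_{a_{u_k-1},q_{k-1}})$, namely $\frac{X_{a_{w-1}, q_{k-1}} X_{a_{u_k-1}, b_w}}{X_{S_w}}$. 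This matches the structure so that the toggle is a monomial-preserving bijection and the sign changes by $-1$.

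The hard part will be setting up the involution rigorously: one must verify that the membership constraints in the various $U$-sets interact coherently under the toggle (so that after the move, one still obtains a valid pair $(\tau', (\sigma_0', \ldots, \sigma_{r'}'))$ in the expanded sum), and that the boundary conventions at the endpoints (where $j$ and $i$ enter, rather than $b_{u_k}$ and $a_{u_k-1}$) are handled correctly. Once this is established, cancellation of all non-trivial pairs is immediate, leaving only the $X_{ij}$ coming from $\tau = \varnothing$, $\sigma_0 = \varnothing$.
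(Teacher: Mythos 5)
Your proposal is correct and follows essentially the same route as the paper: expand the composite on each variable, observe that all resulting terms sharing a given refined chain $u_1<\cdots<u_{r'}$ carry the same monomial, and cancel them in pairs. The only cosmetic difference is the final step, where the paper sums $\sum_{r=0}^{r'}\binom{r'}{r}(-1)^{r'-r}=0$ directly while you package the same cancellation as a sign-reversing involution toggling the smallest index of the refined chain in and out of the outer chain.
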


\begin{proof}
  We  show that $\chi(\psi(X_{ij}))=X_{ij}$. Similar arguments  show that $\psi(\chi
  (X_{ij}))=X_{ij}$. Note that $\chi(\psi(X_{ij}))=X_{ij}$ for all $X_{ij} \in B_1$ since for these variables, $U(i,j) = \varnothing$.
  For $X_{ij} \notin B_1$, we consider the expression
  \begin{equation} \label{eqn:chi}
    \chi \left(
    \frac{X_{a_{u_1-1},j}X_{a_{u_2-1},b_{u_1}}\cdots X_{a_{u_r-1},b_{u_{r-1}}}
    X_{i,b_{u_r}}}{X_{S_{u_1}}X_{S_{u_2}}\cdots X_{S_{u_r}}} \right) .
  \end{equation}
  We have 
  \begin{align*}
    &\chi(X_{a_{u_1-1},j}) = X_{a_{u_1-1},j}\\ 
    &\hspace{10mm}+
    \sum_{u(i,j) \leq z_1 < z_2 < \dots < z_\ell \leq u_1-1} (-1)^\ell
    \frac{X_{a_{z_1-1},j}X_{a_{z_2-1},b_{z_1}}\cdots X_{a_{z_\ell-1},b_{z_{\ell-1}}}
    X_{a_{u_1-1},b_{z_\ell}}}{X_{S_{z_1}}X_{S_{z_2}}\cdots X_{S_{z_\ell}}}\\
    &\chi(X_{a_{u_2-1},b_{u_1}}) = X_{a_{u_2-1,b_{u_1}}}\\ 
    &\hspace{8mm}+
    \sum_{u_1+1 \leq z_1 < z_2 < \dots < z_\ell \leq u_2-1} (-1)^\ell
    \frac{X_{a_{z_1-1},b_{u_1}}X_{a_{z_2-1},b_{z_1}}\cdots X_{a_{z_\ell-1},b_{z_{\ell-1}}}
    X_{a_{u_2-1},b_{z_\ell}}}{X_{S_{z_1}}X_{S_{z_2}}\cdots X_{S_{z_\ell}}}\\
    & \hspace{45pt} \vdots \\
    &\chi(X_{i,b_{u_r}}) = X_{i,b_{u_r}}\\ 
    &\hspace{13mm}+
    \sum_{u_r+1 \leq z_1 < z_2 < \dots < z_\ell \leq v(i,j)} (-1)^\ell
    \frac{X_{a_{z_1-1},b_{u_r}}X_{a_{z_2-1},b_{z_1}}\cdots X_{a_{z_\ell-1},b_{z_{\ell-1}}}
    X_{i,b_{z_\ell}}}{X_{S_{z_1}}X_{S_{z_2}}\cdots X_{S_{z_\ell}}}\,.
  \end{align*}
  Therefore, when we expand expression \eqref{eqn:chi}, all the terms are of the form
  \begin{equation} \label{eqn:expanded}
    (-1)^{r'-r} \frac{X_{a_{u_1-1},j}X_{a_{u_2-1},b_{u_1}}\cdots
    X_{a_{u_{r'}-1},b_{u_{r'-1}}}X_{i,b_{u_{r'}}}}
    {X_{S_{u_1}}X_{S_{u_2}}\cdots X_{S_{u_{r'}}}},
  \end{equation}
  where $u(i,j) \leq u_1 < u_2 < \dots < u_{r'} \leq v(i,j)$ and $r' \geq r \geq 1$.
  Note that all variables that appear in \eqref{eqn:expanded}
  have index $\lneqq (i,j)$, hence the coefficient of $X_{ij}$ in $\chi
  (\psi(X_{ij}))$ is 1. Now in \eqref{eqn:expanded}, we fix $r'$ and vary $r$.
  Then the coefficient of
  \[
    \frac{X_{a_{u_1-1},j}X_{a_{u_2-1},b_{u_1}}\cdots
    X_{a_{u_{r'}-1},b_{u_{r'-1}}}X_{i,b_{u_{r'}}}}
    {X_{S_{u_1}}X_{S_{u_2}}\cdots X_{S_{u_{r'}}}}
  \]
  is $\sum_{r=0}^{r'} \binom{r'}{r} (-1)^{r'-r}=0$, where the coefficient
  $(-1)^{r'}$ appears in the definition of $\chi(X_{ij})$.
  Therefore, $\chi(\psi(X_{ij}))=X_{ij}$.
\end{proof}

\begin{rmk} \label{rmk:lindep} 
  With the notation and assumptions of Definition~\ref{defn:psichi}, it holds that
  \[
    \psi(X_{ij}) = X_{ij} + \sum_{w \in U} \frac{X_{i,b_w}}{X_{S_w}} \psi(X_{a_{w-1,j}}).
  \]
\end{rmk}

The next few lemmas are used to show that the maps $\psi$ and $\chi$ respect certain ideals of minors.
In them, we use the notation $|M|=[p_1, \dots, p_t \mid q_1,\dots,q_t]$ for the $t$-minor 
of the matrix $M$ involving the variables located at the points $(p_i,q_j)$.

\begin{lem} \label{lem:outsidecorner}
With assumptions as in~\ref{ass190715a},
  let $(a_{i-1},b_i)$ be an outside lower corner of $Y$ with $1 \leq i \leq h+1$.  If there are integers $\nu_j$ for $j=1,\ldots,t-1$ such that $b_i<\nu_1<\nu_2<\dots<\nu_{t-1}$ and the $t$-minor $$|M| =[a_{i-1},a_{i-1}+1,a_{i-1}+2,\dots,a_{i-1}+t-1 \mid b_i,\nu_1,\nu_2,\dots,\nu_{t-1}] \in I_t(Y)$$ then
  \begin{equation} \label{eqn:outsidecorner}
      \psi(|M|) = X_{S_i}([a_{i-1}+1,a_{i-1}+2,\dots,a_{i-1}+t-1 \mid \nu_1,\nu_2,\dots,\nu_{t-1}]+E)
  \end{equation}
  where $E$ is a linear combination of
  $(t-1)$-minors of the form $$[a_{i-1}+1,a_{i-1}+2,\dots,a_{i-1}+t-1 \mid  \gs_1,\gs_2,\dots,\gs_{t-1}]$$
  and $\gs_j=\nu_j$ or $\gs_j =b_w$ with $w \in U(a_{i-1},\nu_j)$, and not all $\gs_j=\nu_j$.
  In particular, $b_i<\gs_j \leq \nu_j$.
\end{lem}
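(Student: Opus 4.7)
The plan is to compute $\psi(|M|) = \det(\psi M)$ directly, applying Remark~\ref{rmk:lindep} entrywise to the $t \times t$ matrix $M$ underlying $|M|$ and then using multilinearity of the determinant. For each column $c_j$ of $M$ (where $c_0 = b_i$ and $c_j = \nu_j$ for $j \geq 1$), Remark~\ref{rmk:lindep} expresses $\psi(X_{a_{i-1}+k, c_j})$ as the original entry $X_{a_{i-1}+k, c_j}$ plus a linear combination of substitute entries $X_{a_{i-1}+k, b_w}$ indexed by $w \in U(a_{i-1}+k, c_j)$, with coefficients $\psi(X_{a_{w-1}, c_j})/X_{S_w}$. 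Expanding by multilinearity, $\psi(|M|)$ becomes a sum of determinants indexed by choices, for each column, between the original column and substitute columns of the form $(X_{a_{i-1}+k, b_w})_k$.

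I would then organize the resulting sum by identifying which substitutions actually survive without producing repeated columns. Substitutions in column $\nu_j$ with $w=i$ (possible when $k\geq 1$, since then $a_{i-1}+k > a_{i-1}$ and $\nu_j>b_i$) produce a substitute column $(X_{a_{i-1}+k,b_i})_k$ that matches the unsubstituted first column, so such determinants either vanish or cancel in signed pairs. The remaining substitutions in column $\nu_j$ use $w \in U(a_{i-1},\nu_j)$, producing substitute columns $(X_{a_{i-1}+k,b_w})_k$ with $b_i<b_w\leq \nu_j$; this matches the prescribed form $\sigma_j \in \{\nu_j\} \cup \{b_w : w \in U(a_{i-1},\nu_j)\}$ stated in the lemma.

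The final step is to factor out $X_{S_i}$ by expanding each surviving determinant along the first row: the $(0,0)$-entry is $X_{a_{i-1},b_i}=X_{S_i}$ when the first column is unsubstituted, yielding the principal $(t-1)$-minor $[a_{i-1}+1,\dots,a_{i-1}+t-1\mid \nu_1,\dots,\nu_{t-1}]$ times $X_{S_i}$, while the analogous process applied columnwise to the remaining $t-1$ columns produces the error term $E$ of the claimed form. The main obstacle is showing that $X_{S_i}$ factors out \emph{uniformly} from every surviving contribution---including those arising from first-column substitutions by $b_w$ with $w<i$, which correspond to columns outside $M$. Handling these requires iterating Remark~\ref{rmk:lindep} on the coefficients $\psi(X_{a_{w-1}, c_j})$ and exploiting both the outside-corner structure of $S_i = (a_{i-1},b_i)$ and the ladder property of $Y$, so that each such contribution either cancels or acquires $X_{S_i}$ as a factor; after this extraction, matching indices confirms that the residual is exactly the main $(t-1)$-minor plus $E$.
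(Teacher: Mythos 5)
Your starting point --- apply Remark~\ref{rmk:lindep} entrywise to $\psi(M)$ and expand by multilinearity --- is where the paper begins too, but the step you defer as ``the main obstacle'' is the actual content of the lemma, and the concrete cancellation claims you do make are wrong. The issue is that in Remark~\ref{rmk:lindep} the index set $U(p,q)$ depends on the row $p$: for $p=a_{i-1}+k$ it contains, besides $U(a_{i-1},\nu_j)$, every $w\geq i$ with $a_{w-1}<p$ (and $b_w<\nu_j$). Hence a column of $\psi(M)$ is \emph{not} a linear combination of the plain column and full substitute columns $(X_{a_{i-1}+k,b_w})_k$ with row-independent coefficients; the vectors attached to those extra $w$ are truncated, vanishing in the rows $k$ with $a_{i-1}+k\leq a_{w-1}$. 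In particular, the $w=i$ substitute vector is the first column of $M$ with its top entry $X_{S_i}$ replaced by $0$, so it does \emph{not} ``match the unsubstituted first column,'' and no vanishing or signed pairing follows. (Also, the substitutions occurring in the first column itself come from $w>i$, since membership in $U(p,b_i)$ forces $b_w<b_i$; your attribution of the out-of-$M$ columns there to $w<i$ has the inequality backwards.)

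The paper removes exactly these troublesome terms \emph{before} expanding, by elementary row operations: for each $w$ such that $a_{w-1}$ is a row of $M$, subtract $\tfrac{X_{p,b_w}}{X_{S_w}}$ times row $a_{w-1}$ of $\psi(M)$ from row $p$; the crucial point is that this coefficient is independent of the column, so it is a legitimate determinant-preserving operation. A short computation with Remark~\ref{rmk:lindep} shows that this simultaneously (i) reduces the first column to $(X_{S_i},0,\dots,0)^T$, because $\psi(X_{S_i})=X_{S_i}$ and $U(p,b_i)$ consists entirely of such $w$, and (ii) leaves the $(j+1)$st column equal to the plain column plus $\sum_{w\in U(a_{i-1},\nu_j)}\tfrac{\psi(X_{a_{w-1},\nu_j})}{X_{S_w}}\,(X_{p,b_w})_p$, i.e., substitutions by genuine full columns indexed only by $w<i$, which give $b_i<b_w\leq\nu_j$. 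Expanding along the first column then extracts $X_{S_i}$ uniformly, and column multilinearity on the remaining $(t-1)\times(t-1)$ block yields the principal minor plus $E$. If you insist on the direct multilinear expansion, you must exhibit the cancellation scheme that these row operations encode; as written, your proposal only asserts that it exists.
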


\begin{proof}
  Apply $\psi$ to all the entries in the minor
  $|M|$.  Prior to calculating the determinant of $M$ we perform elementary row operations, using Remark~\ref{rmk:lindep} and the fact that the $X_{S_w}$ are units, so that the first column is reduced to
  $[ \begin{matrix}
      X_{S_i} & 0 & \cdots & 0
    \end{matrix} ]^T$.
  The $(j+1)$st column becomes
$$      \begin{bmatrix}
      X_{a_{i-1},\nu_j} \\ X_{a_{i-1}+1,\nu_j} \\ \vdots \\ X_{a_{i-1}+t-1,\nu_j}
    \end{bmatrix} 
    + \sum_{w \in U(a_{i-1},\nu_j)} \frac{\psi(X_{a_{w-1},\nu_j})}{X_{S_w}}
    \begin{bmatrix}
      X_{a_{i-1},b_w} \\ X_{a_{i-1}+1,b_w} \\ \vdots \\ X_{a_{i-1}+t-1,b_w}
    \end{bmatrix}.
$$
  The Lemma then follows by expanding the determinant along the first column.
\end{proof}

\begin{lem} \label{lem:outsidecornergen}
Continue with Assumption~\ref{ass190715a}.
Let $(a_{i_0-1},b_{i_0})$ be an outside lower corner, $1 \leq i_0 \leq h+1$. Let $\mu_j, \nu_j \in \mathbb N$ be such that 
  $a_{i_0-1}<\mu_1<\mu_2<\dots<\mu_{t-1}$ and $b_{i_0}<\nu_1<\nu_2<\dots<\nu_{t-1}$,
  and set 
  $$|M|:= [a_{i_0-1},\mu_1,\mu_2,\dots,\mu_{t-1} \mid b_{i_0},\nu_1,\nu_2,\dots,\nu_{t-1}] \in I_t(Y).$$
  Then one has
  \begin{equation} \label{eqn:outsidecornergen}
    \psi(|M|)= X_{S_{i_0}}([\mu_1,\mu_2,\dots,\mu_{t-1} \mid \nu_1,\nu_2,\dots,\nu_{t-1}]+E)
  \end{equation}
  where $E$ is a linear combination of
  $(t-1)$-minors of the form $$[\rho_1,\rho_2,\dots,\rho_{t-1} \mid \gs_1,\gs_2,\dots,\gs_{t-1}]$$
  where $\rho_i \in \{ a_{w-1} \mid w \in U(\mu_i,b_{i_0})\} \cup \{\mu_i\}$,
  $\gs_j \in \{ b_w \mid w \in U(a_{i_0-1},\nu_j) \} \cup \{ \nu_j \}$, and it does not happen that
  all $\rho_i=\mu_i$ and $\gs_j=\nu_j$ at the same time. In particular, one has
  $a_{i_0-1} < \rho_i \leq \mu_i$ and $b_{i_0} < \gs_j \leq \nu_j$.
\end{lem}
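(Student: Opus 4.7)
The strategy is to adapt the proof of Lemma~\ref{lem:outsidecorner}, which handled the case of consecutive rows $a_{i_0-1}, a_{i_0-1}+1, \ldots, a_{i_0-1}+t-1$, to the general situation where $\mu_1 < \mu_2 < \cdots < \mu_{t-1}$ are arbitrary row indices above $a_{i_0-1}$. First, I apply $\psi$ entrywise to the $t \times t$ matrix $M$. The top-left entry $\psi(X_{a_{i_0-1}, b_{i_0}})$ reduces to $X_{S_{i_0}}$ exactly as in Lemma~\ref{lem:outsidecorner}: because $(a_{i_0-1}, b_{i_0})$ is an outside lower corner of $Y$, no index $w$ can simultaneously satisfy $a_{i_0-1} > a_{w-1}$ and $b_{i_0} > b_w$, so $U(a_{i_0-1}, b_{i_0}) = \varnothing$.

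Next, I invoke Remark~\ref{rmk:lindep} to expand each first-column entry
\[
\psi(X_{\mu_k, b_{i_0}}) = X_{\mu_k, b_{i_0}} + \sum_{w \in U(\mu_k, b_{i_0})} \frac{X_{\mu_k, b_w}}{X_{S_w}}\, \psi(X_{a_{w-1}, b_{i_0}}),
\]
and perform elementary row operations, valid because every $X_{S_w}$ is a unit, so that the first column of $\psi(M)$ reduces to $[X_{S_{i_0}}, 0, \ldots, 0]^T$. The effect of these operations on each remaining column is to allow the effective row index $\mu_i$ to be replaced by $a_{w-1}$ for any $w \in U(\mu_i, b_{i_0})$, thereby producing the permitted set of row indices $\{\rho_i\}$ in the statement. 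An analogous application of Remark~\ref{rmk:lindep} to the first-row entries $\psi(X_{a_{i_0-1}, \nu_\ell})$ allows the effective column index $\nu_j$ to be replaced by $b_w$ for any $w \in U(a_{i_0-1}, \nu_j)$, producing the permitted set $\{\gs_j\}$; this is the direct analog of the column expansion that concludes the proof of Lemma~\ref{lem:outsidecorner}.

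Expanding the reduced determinant along the first column then yields $X_{S_{i_0}}$ times a $(t-1) \times (t-1)$ cofactor that decomposes multilinearly into the leading minor $[\mu_1, \ldots, \mu_{t-1} \mid \nu_1, \ldots, \nu_{t-1}]$ (from the choice $\rho_i = \mu_i$ and $\gs_j = \nu_j$ throughout) together with all remaining choices, which collectively form $E$. The inequalities $a_{i_0-1} < \rho_i \leq \mu_i$ and $b_{i_0} < \gs_j \leq \nu_j$ follow directly from the defining conditions of $U(\cdot,\cdot)$ and the corner property of $(a_{i_0-1}, b_{i_0})$.

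The main obstacle will be the combinatorial bookkeeping needed to track every cross-term produced by the combined row and column manipulations and to verify that each such term conforms to the prescribed shape while the pure leading combination is peeled off separately. The non-consecutive nature of $\mu_1, \ldots, \mu_{t-1}$ enlarges the sets $U(\mu_i, b_{i_0})$ relative to the consecutive-row setting of Lemma~\ref{lem:outsidecorner}, which is why -- in contrast to that lemma -- both the row and the column indices in error terms here exhibit genuine variation.
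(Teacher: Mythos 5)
Your overall strategy (apply $\psi$ entrywise, clear the first column using Remark~\ref{rmk:lindep}, then expand) is the right one, but there is a genuine gap at the central step, and it is precisely the point where this lemma is harder than Lemma~\ref{lem:outsidecorner}. When you expand $\psi(X_{\mu_k,b_{i_0}})$ via Remark~\ref{rmk:lindep}, the terms that appear involve $\psi(X_{a_{w-1},b_{i_0}})$ for $w\in U(\mu_k,b_{i_0})$, and the rows $a_{w-1}$ are in general \emph{not} rows of $M$ (in the consecutive-row setting of Lemma~\ref{lem:outsidecorner} the needed rows are present; here they need not be). Consequently, ``subtract a multiple of the row $a_{w-1}$ from the row $\mu_k$'' is not an elementary row operation on the $t\times t$ matrix $\psi(M)$, and no sequence of elementary row operations on $\psi(M)$ followed by cofactor expansion can produce the error terms the statement requires, namely $(t-1)$-minors whose row indices $\rho_i=a_{w-1}$ lie outside $\{\mu_1,\dots,\mu_{t-1}\}$. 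Your proposal asserts that the operations ``allow the effective row index $\mu_i$ to be replaced by $a_{w-1}$,'' but supplies no mechanism by which a determinant acquires rows that were never in the matrix.

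The paper's proof fills exactly this hole with an augmented-matrix device: it sets $W=\{a_{w-1}\mid w\in U(\mu_{t-1},b_{i_0})\}\ssm\{\mu_1,\dots,\mu_{t-1}\}$ and, when $W\neq\varnothing$, forms a $(t+|W|)\times(t+|W|)$ determinant $|M'|$ obtained from $M$ by adjoining the missing rows together with an identity block $I_{|W|}$ and a zero block, so that $|M'|=|M|$. Inside $M'$ all the rows needed for the reduction are present, the row reduction is a genuine sequence of elementary operations, and expanding along the $(|W|+1)$st column and then the first $|W|$ columns yields the stated shape of $E$; the observation that the row containing $\psi(X_{a_w,b_{i_0}})$ is used against the row containing $\psi(X_{\mu_i,b_{i_0}})$ only when $a_w<\mu_i$ is what gives $a_{i_0-1}<\rho_i\leq\mu_i$. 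To repair your argument you would need either this augmentation or an explicit multilinear decomposition of the determinant after substituting the ``virtual'' rows, which amounts to the same thing; as written, the claim that the first column reduces to $[X_{S_{i_0}},0,\dots,0]^T$ while the remaining columns take the required form is not justified.
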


\begin{proof}
  Let $W = \{a_{w-1} \mid w \in U(\mu_{t-1},b_{i_0})\} \ssm \{\mu_1,\dots,\mu_{t-1}\}$.
  If $W = \varnothing$, then the proof is similar to that of Lemma~\ref{lem:outsidecorner}.
  Otherwise, let $u=\min W$ and $v=\max W$.
  Form the minor
  \begin{center}
    $|M'| =  \left|
    \begin{tabular}{ c | c c c c c }
                         & $X_{a_u,b_{i_0}}$ & $X_{a_u,\nu_1}$ & $X_{a_u,\nu_2}$ & $\cdots$ & $X_{a_u,\nu_{t-1}}$\\
      $I_{|W|}$ & $\vdots$               &                               &                  &                               & $\vdots$\\
                         & $X_{a_v,b_{i_0}}$ & $X_{a_v,\nu_1}$ & $X_{a_v,\nu_2}$ & $\cdots$ & $X_{a_v,\nu_{t-1}}$\\
      \hline
      \pmb{0}                 &&& \rule{0pt}{15pt} $M$
    \end{tabular}
    \right|$,
  \end{center}
  so that $|M'|=|M|$. Apply $\psi$ to all the entries in $M'$, and row reduce as in
  Lemma~\ref{lem:outsidecorner}, so that $\psi(|M'|)=\psi(|M|)$ becomes
   \begin{center}
    $\left|
    \begin{tabular}{ c c c | c l l l l }
      1 &           & 0  & 0 & $X_{a_u,\nu_1}+\cdots$ & $X_{a_u,\nu_2}+\cdots$ & $\cdots$ & $X_{a_u,\nu_{t-1}}+\cdots$\\
      & $\ddots$ & & $\vdots$               &                               &                  &                               & \hspace{15pt} $\vdots$\\
      * &           &  1  & 0 & $X_{a_v,\nu_1}+\cdots$ & $X_{a_v,\nu_2}+\cdots$ & $\cdots$ & $X_{a_v,\nu_{t-1}}+\cdots$\\
      \hline
      \rule{0pt}{12pt} 0  & $\cdots$ & 0 & $X_{S_{i_0}}$ & $X_{a_{i_0-1},\nu_1}+\cdots$ & $X_{a_{i_0-1},\nu_2}+\cdots$ & $\cdots$ & $X_{a_{i_0-1},\nu_{t-1}}+\cdots$\\
                                                              &&& 0 & $X_{a_{\mu_1},\nu_1}+\cdots$ & $X_{a_{\mu_1},\nu_2}+\cdots$ & $\cdots$ & $X_{a_{\mu_1},\nu_{t-1}}+\cdots$\\
           &   *    &                                          & $\vdots$ &&&& \hspace{15pt} $\vdots$\\
                                                              &&& 0 & $X_{a_{\mu_{t-1}},\nu_1}+\cdots$ & $X_{a_{\mu_{t-1}},\nu_2}+\cdots$ & $\cdots$ & $X_{a_{\mu_{t-1}},\nu_{t-1}}+\cdots$
    \end{tabular}
    \right|$.
  \end{center}
  Expand the above determinant along the $(|W|+1)$st column,
  and then along the first $|W|$ columns. The conditions for $\gs_j$ then follow.
  Now we note that when we row reduce $\psi(|M'|)$, we use the row that
  contains $\psi(X_{a_w,b_{i_0}})$ to row reduce the row that contains
  $\psi(X_{\mu_i,b_{i_0}})$ if and only if $a_w<\mu_i$. So when 
  the row-reduced determinant is expanded along the first $|W|+1$ columns,
  the remaining rows either have index $\mu_1,\dots,\mu_{t-1}$,
  or some of the $\mu_i$ can be replaced with smaller indices among
  $\{a_{w-1} \mid w \in U(\mu_{t-1},b_{i_0})\}$. Therefore $a_{i_0-1}<\rho_i \leq \mu_i$.
\end{proof}

\begin{cor} \label{cor:contains}
 Continue with Assumption ~\ref{ass190715a}.
  Let $a_{i_0-1}<\mu_1<\mu_2<\dots<\mu_{t-1}$ and $b_{i_0}<\nu_1<\nu_2<\dots<\nu_{t-1}$
  be such that $1 \leq i_0 \leq h+1$ and
 $$[a_{i_0-1},\mu_1,\mu_2,\dots,\mu_{t-1} \mid b_{i_0},\nu_1,\nu_2,\dots,\nu_{t-1}] \in I_t(Y).$$
  Let $|N|=[\mu_1,\mu_2,\dots,\mu_{t-1} \mid \nu_1,\nu_2,\dots,\nu_{t-1}]$. Then
  \begin{equation} \label{eqn:chiofn}
    \chi(|N|)= X^{-1}_{S_{i_0}}([a_{i_0-1},\mu_1,\mu_2,\dots,\mu_{t-1} \mid
    b_{i_0},\nu_1,\nu_2,\dots,\nu_{t-1}]+E)
  \end{equation}
  where $E$ is a linear combination of
  $t$-minors of the form $$[a_{i_0-1},\rho_1,\rho_2,\dots,\rho_{t-1} \mid b_{i_0},\gs_1,\gs_2,\dots,\gs_{t-1}]$$
  with $a_{i_0-1} < \rho_i \leq \mu_i$ and $b_{i_0} < \gs_j \leq \nu_j$.
  In particular, we have $\psi(\langle I_t(Y)\rangle) \supseteq \langle I_{t-1}(Z)\rangle$ in $\sfk[Y]_{\mfx}$.
\end{cor}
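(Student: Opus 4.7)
The plan is to derive the formula for $\chi(|N|)$ by applying $\chi$ to the identity in Lemma~\ref{lem:outsidecornergen}, and then to deduce the asserted ideal containment as a direct consequence.

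First, I will invoke Lemma~\ref{lem:outsidecornergen} to write $\psi(|M|) = X_{S_{i_0}}(|N| + E)$, where $E$ is the prescribed linear combination of $(t-1)$-minors $[\rho_1,\ldots,\rho_{t-1} \mid \sigma_1,\ldots,\sigma_{t-1}]$ with $a_{i_0-1} < \rho_i \leq \mu_i$ and $b_{i_0} < \sigma_j \leq \nu_j$, excluding $(\rho,\sigma) = (\mu,\nu)$. Next, I will apply $\chi$ to both sides. A direct check of the formula in Definition~\ref{defn:psichi} shows $U(a_{i_0-1}, b_{i_0}) = \varnothing$ (the monotonicity of the indices of outside lower corners forces this), so $\chi$ fixes $X_{S_{i_0}}$; combined with Lemma~\ref{lem190807a}, which gives $\chi\circ\psi=\mathrm{id}$, this yields $|M| = X_{S_{i_0}}(\chi(|N|) + \chi(E))$. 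Rearranging gives
\[
\chi(|N|) = X_{S_{i_0}}^{-1}\bigl(|M| - X_{S_{i_0}}\chi(E)\bigr).
\]

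To match the stated form of the corollary, I still need to show that $-X_{S_{i_0}}\chi(E)$ is a linear combination of $t$-minors of the shape $[a_{i_0-1},\rho_1,\ldots\mid b_{i_0},\sigma_1,\ldots]$ with $a_{i_0-1}<\rho_i\leq\mu_i$ and $b_{i_0}<\sigma_j\leq\nu_j$. I will do this by induction on $(\mu_1,\ldots,\mu_{t-1},\nu_1,\ldots,\nu_{t-1})$ in lex order. Each $(t-1)$-minor $|N'| = [\rho_1,\ldots\mid\sigma_1,\ldots]$ appearing in $E$ has strictly smaller indices within the stated ranges, and using the ladder property together with the explicit structure of the $\rho_i$ and $\sigma_j$ from Lemma~\ref{lem:outsidecornergen} (where $\rho_i\in\{a_{w-1}\}\cup\{\mu_i\}$ and $\sigma_j\in\{b_w\}\cup\{\nu_j\}$), I will verify that the augmented $t$-minor $[a_{i_0-1},\rho_1,\ldots\mid b_{i_0},\sigma_1,\ldots]$ lies in $I_t(Y)$. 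The inductive hypothesis then gives $\chi(|N'|) = X_{S_{i_0}}^{-1}(|\mathrm{aug}|+E')$ with $E'$ of the desired form; summing all such contributions with the prefactor $-X_{S_{i_0}}$ produces the required expression.

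For the ``In particular'' statement, let $|N_0|$ be an arbitrary $(t-1)$-minor based in $Z$. Since $Z = Y\setminus B_1$ is obtained from $Y$ by removing only the thickness-one lower border, I can select an outside lower corner $(a_{i_0-1},b_{i_0})$ of $Y$ such that the augmented $t$-minor $[a_{i_0-1},\mu_1,\ldots\mid b_{i_0},\nu_1,\ldots]$ lies in $I_t(Y)$. The formula just established gives $\chi(|N_0|) = X_{S_{i_0}}^{-1}(|M|+E)$, which lies in the extension of $\langle I_t(Y)\rangle$ to $\sfk[Y]_\mfx$, because $X_{S_{i_0}}$ is a unit there and $|M|$ together with the terms of $E$ all lie in $I_t(Y)$. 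Applying $\psi$ and using $\psi\circ\chi=\mathrm{id}$, I conclude $|N_0| = \psi(\chi(|N_0|)) \in \psi(\langle I_t(Y)\rangle)$, as desired.

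The hardest part will be the bookkeeping in the inductive step: one must verify, for each $(t-1)$-minor appearing in $E$, that the augmented $t$-minor really does lie in $I_t(Y)$, which requires invoking the hook-closedness of the ladder $Y$ together with the precise index ranges for $\rho_i$ and $\sigma_j$ prescribed by Lemma~\ref{lem:outsidecornergen}.
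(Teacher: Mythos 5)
Your proposal is correct and follows essentially the same route as the paper: apply $\chi$ to the identity of Lemma~\ref{lem:outsidecornergen}, use $\chi\circ\psi=\mathrm{id}$ and the fact that $\chi$ fixes $X_{S_{i_0}}$, rearrange, and handle the error term $E$ by induction on the column/row indices (the paper inducts on the shifted tuples $(\xi,\gz)$ in reverse lexicographic order, which is the same well-founded descent), then obtain the containment by choosing $i_0\in U(\mu_1,\nu_1)\neq\varnothing$ for a generator of $I_{t-1}(Z)$. Your explicit attention to checking that the augmented minors $[a_{i_0-1},\rho_1,\dots\mid b_{i_0},\gs_1,\dots]$ still lie in $I_t(Y)$ is a detail the paper leaves implicit, and is correctly resolved by the ladder property exactly as you indicate.
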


\begin{proof}
  Let $\gz = (\gz_1,\dots,\gz_{t-1})$ and $\xi = (\xi_1,\dots,\xi_{t-1})$, where
  $\gz_i=\mu_i-(a_{i_0-1}+i)$ and $\xi_j=\nu_j-(b_{i_0}+j)$ for $1 \leq i,j \leq t-1$.
  Note that these sequences are non-decreasing.
  The proof is by induction on $(\xi,\gz)$, where the values of $\gz$ and $\xi$ each form a finite subset of
  $\bbn^{t-1}$. The ordering on $(\xi,\gz)$ is given by applying the reverse lexicographic order to $\xi$,
  to $\gz$, and then to $(\xi,\gz)$. Consider the base case when $\xi=\gz=0$.
  Then \eqref{eqn:outsidecorner} gives
  \begin{gather*}
    \psi([a_{i_0-1},\dots,a_{i_0-1}+t-1 \mid b_{i_0},\dots, b_{i_0}+t-1])
    = X_{S_{i_0}} |N|, \text{ so}\\
    \chi(|N|) = X^{-1}_{S_{i_0}} [a_{i_0-1},\dots,a_{i_0-1}+t-1 \mid b_{i_0},\dots, b_{i_0}+t-1].
  \end{gather*}

  In the induction step, let us consider $(\xi,\gz) \neq (0,0)$. The term $E$ in equation \eqref{eqn:outsidecornergen}
  is a  linear combination of 
  $(t-1)$-minors of the form $N'=[\rho_1,\rho_2,\dots,\rho_{t-1} \mid \gs_1,\gs_2,\dots,\gs_{t-1}]$
  where $a_{i_0-1} < \rho_i \leq \mu_i$, $b_{i_0} < \gs_j \leq \nu_j$, and $\rho_i<\mu_i$ for at least one $i$
  or $\gs_j<\nu_j$ for at least one $j$. Then one can apply $\chi$
  to both sides of \eqref{eqn:outsidecornergen}, rearrange, and apply the induction hypothesis
  to the terms $N'$ in $E$.

  Finally, we let $|N|=[\mu_1,\mu_2,\dots,\mu_{t-1} \mid \nu_1,\nu_2,\dots,\nu_{t-1}]$
  be a generator of $I_{t-1}(Z)$. Then $X_{\mu_1,\nu_1} \notin B_1$.
  Let $i_0 \in U(\mu_1,\nu_1) \neq \varnothing$. Then \eqref{eqn:chiofn} shows that
  $|N| \in \psi(I_t(Y))$. Therefore $\psi(I_t(Y)) \supseteq I_{t-1}(Z)$.
\end{proof}

\begin{lem} \label{lem:notoutsidecorner}
In addition to Assumption~\ref{ass190715a},
 suppose that $Y$ does not have an outside corner at $(\mu_0,\nu_0)$.
  Let $\mu_0<\mu_1<\mu_2<\dots<\mu_{t-1}$ and $\nu_0<\nu_1<\nu_2<\dots<\nu_{t-1}$
  be such that $|M|:=[\mu_0,\mu_1,\mu_2,\dots,\mu_{t-1} \mid \nu_0,\nu_1,\nu_2,\dots,\nu_{t-1}] \in I_t(Y)$.
  Then 
  \[
    \psi(|M|) = [\mu_0,\mu_1,\mu_2,\dots,\mu_{t-1} \mid \nu_0,\nu_1,\nu_2,\dots,\nu_{t-1}]+E
  \]
  where $E$ is a linear combination of $t$-minors of the form
  $$[\rho_0,\rho_1,\rho_2,\dots,\rho_{t-1} \mid \gs_0,\gs_1,\gs_2,\dots,\gs_{t-1}]$$
  where 
  \begin{align*}\rho_i &\in \{ a_{w-1} \mid w \in U(\mu_i,\nu_0) \ssm U(\mu_0,\nu_0) \} \cup \{\mu_i\}\\
  \gs_j &\in \{ b_w \mid w \in U(\mu_0,\nu_j) \} \cup \{ \nu_j \}
  \end{align*} 
  and it does not happen that 
  all $\rho_i=\mu_i$ and $\gs_j=\nu_j$ at the same time. In particular, 
  $\mu_0 \leq \rho_i \leq \mu_i$ and $\gs_j \leq \nu_j$.
\end{lem}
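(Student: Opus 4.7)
The proof will follow the augmentation and row-reduction strategy of Lemma~\ref{lem:outsidecornergen}, with the key adaptation that because $(\mu_0,\nu_0)$ is not an outside corner there is no distinguished $X_{S_{i_0}}$ to factor out; instead, the identity contribution in the expansion will produce the unmodified leading minor $|M|$.

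First I would apply $\psi$ entrywise to $M$, using Remark~\ref{rmk:lindep} to write
\[
  \psi(X_{\mu_i,\nu_j})=X_{\mu_i,\nu_j}+\sum_{w\in U(\mu_i,\nu_j)}\frac{X_{\mu_i,b_w}}{X_{S_w}}\,\psi(X_{a_{w-1},\nu_j}).
\]
Next, since $\mu_0<\mu_i$ forces the containment $U(\mu_0,\nu_0)\subseteq U(\mu_i,\nu_0)$, the correction summands in the first column indexed by $w\in U(\mu_0,\nu_0)$ appear in every row and not just the top one. This is the reason one must exclude $U(\mu_0,\nu_0)$ from the allowed $\rho_i$-set in the statement.

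Second, in direct parallel with Lemma~\ref{lem:outsidecornergen}, I would form an augmented matrix $M'$ by adjoining the rows indexed by the set $W:=\{a_{w-1}\colon w\in U(\mu_0,\nu_0)\}\ssm\{\mu_0,\mu_1,\ldots,\mu_{t-1}\}$ and a corresponding identity block of $|W|$ extra columns, arranged so that $|M'|=|M|$. After applying $\psi$ and performing elementary row operations using the augmenting rows (multiplied by the unit factors $X_{\mu_i,b_w}/X_{S_w}$ with $w\in U(\mu_0,\nu_0)$), the first column's entries in the bottom $t$-row block will be stripped of exactly those correction terms coming from $w\in U(\mu_0,\nu_0)$. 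Only corrections with $w\in U(\mu_i,\nu_0)\ssm U(\mu_0,\nu_0)$ will remain, and in particular the row $\mu_0$ will be left untouched, matching the statement's constraint $\rho_0\in\{\mu_0\}$.

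Third, I would expand the resulting determinant along the $|W|$ augmentation columns (which contribute the identity from the augmentation block) and then analyze the remaining $t\times t$ determinant by multilinearity: the identity term yields $|M|$, and each non-identity choice of a correction summand (in some row of column $\nu_0$ or some column $\nu_j$) produces a minor in which the corresponding $\mu_i$ is replaced by an $a_{w-1}$ from $U(\mu_i,\nu_0)\ssm U(\mu_0,\nu_0)$, or $\nu_j$ is replaced by a $b_w$ with $w\in U(\mu_0,\nu_j)$. The bounds $\mu_0\leq\rho_i\leq\mu_i$ and $\gs_j\leq\nu_j$ then follow from the fact that for $w\in U(\mu_i,\nu_0)$ one has $a_{w-1}<\mu_i$ and $b_w<\nu_0\leq\nu_j$, and the fact that row reduction never promotes an index beyond $\mu_i$.

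The main obstacle is bookkeeping: one must verify carefully that (i) the augmentation gives $|M'|=|M|$, (ii) the row operations cleanly remove precisely the summands indexed by $U(\mu_0,\nu_0)$ from the first column, (iii) no correction from $U(\mu_0,\nu_0)$ sneaks back in through the Laplace expansion across the $|W|$ augmentation columns, and (iv) the iterative $\psi(X_{a_{w-1},\nu_j})$ recursion in Remark~\ref{rmk:lindep} does not produce any row indices exceeding $\mu_i$ or column indices exceeding $\nu_j$. Each of these, while conceptually clear, requires careful indexing just as in the proof of Lemma~\ref{lem:outsidecornergen}.
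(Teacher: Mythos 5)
Your overall strategy (augment the matrix, apply $\psi$ entrywise, row reduce, then Laplace-expand along the augmentation columns) is the same as the paper's, but your choice of augmentation set $W$ is the wrong one, and this is not a cosmetic slip. You adjoin the rows $a_{w-1}$ with $w\in U(\mu_0,\nu_0)$ and use them to strip the corrections indexed by $U(\mu_0,\nu_0)$ from the bottom block. Those are exactly the corrections that must \emph{not} be removed by row operations: since $U(\mu_0,\nu_0)\subseteq U(\mu_i,\nu_0)$ for every $i$, each $w\in U(\mu_0,\nu_0)$ contributes to \emph{every} entry of column $\nu_0$, so its total contribution is the scalar $\psi(X_{a_{w-1},\nu_0})/X_{S_w}$ times the full (plain) column $b_w$; multilinearity in the first column then produces precisely the allowed terms with $\gs_0=b_w$, $w\in U(\mu_0,\nu_0)$. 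If you instead remove these by subtracting multiples of augmentation rows $a_{w-1}$, the Laplace expansion along the augmentation columns reintroduces them as minors in which some $\mu_i$ is replaced by $a_{w-1}$ with $w\in U(\mu_0,\nu_0)$, i.e., with $\rho_i=a_{w-1}<\mu_0$ --- terms explicitly excluded by the statement (the bound $\mu_0\le\rho_i$ comes from $w\notin U(\mu_0,\nu_0)$ together with $b_w<\nu_0$, forcing $a_{w-1}\ge\mu_0$). You would then need to show these forbidden terms cancel, which you do not address.

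Symmetrically, the corrections you propose to leave in place --- those with $w\in U(\mu_i,\nu_0)\ssm U(\mu_0,\nu_0)$ --- are the ones that genuinely require augmentation rows: they are supported on only some rows of column $\nu_0$, hence are not multiples of any full column and cannot be converted into minors by column multilinearity; to realize them as the row replacements $\rho_i=a_{w-1}$ demanded by the statement, the rows $a_{w-1}$ with $w\in U(\mu_{t-1},\nu_0)\ssm U(\mu_0,\nu_0)$ must actually be present in the matrix. That is the paper's choice: $W=\{a_{w-1}\mid w\in U(\mu_{t-1},\nu_0)\ssm U(\mu_0,\nu_0)\}\ssm\{\mu_0,\dots,\mu_{t-1}\}$, with the additional care that if $\mu_0$ itself equals some such $a_{w-1}$ one uses the existing row $\mu_0$ rather than an augmentation row, and one expands only along the first $|W|$ columns. (Your step two is also internally inconsistent: you assert both that the $U(\mu_0,\nu_0)$ corrections are stripped from the bottom $t$-row block and that row $\mu_0$ is left untouched, yet row $\mu_0$ carries exactly those corrections.) Replacing your $W$ by the paper's and routing the $U(\mu_0,\nu_0)$ corrections through column multilinearity repairs the argument.
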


\begin{proof}
  The proof is similar to that of Lemma~\ref{lem:outsidecornergen} with the following modifications.
  Let $W = \{a_{w-1} \mid w \in U(\mu_{t-1},\nu_0) \ssm U(\mu_0,\nu_0) \}
  \ssm \{\mu_0,\mu_1,\dots,\mu_{t-1}\}$. When row reducing $\psi(|M'|)$,
     if
  $\mu_0 \in \{ a_{w-1} \mid w \in U(\mu_{t-1},\nu_0) \}$,
 use only the row that contains the entry $\psi(X_{\mu_0,\nu_0})$.
  Finally, after row reduction, expand along the first $|W|$ columns.
\end{proof}

\begin{notn}\label{notn190701a}
Continue with Assumption~\ref{ass190715a}, and let $Z=Y\ssm B_1$.
We extend the definitions of $\q_i,\q'_i,\p_j$ in \cite[pp.~459, 463]{MR1413891}.
For each lower outside corner $S_i$ as in \cite[p.~457]{MR1413891}, set
$$\mathfrak Q_i\!=\!
\begin{cases}
\{X_{pq}\!\in\! Y\mid a_{i-1}\leq p\leq a_{i-1}+t-2\}&\hspace{-7pt}\text{if $i=1$ or $S_{i-1}'$ has type 1}\\
\{X_{pq}\!\in\! Y\mid \text{$a_{i-1}\leq p\leq a_{i-1}+t-2$ and $q\leq d_j$}\}&\hspace{-7pt}\text{if $i> 1$ and $S_{i-1}'$ has type 2}
\end{cases}
$$
where, in the second case, $T_j'$ is the companion corner for $S_{i-1}'$. A slogan for this is
$$\mathfrak Q_i=
\begin{cases}
\{\text{rows of $Y$ involved in $F_i$}\}&\text{if $i=1$ or $S_{i-1}'$ has type 1}\\
\{\text{partial rows of $Y$ involved in $F_i$}\}&\text{if $i\neq 1$ and $S_{i-1}'$ has type 2.}
\end{cases}
$$
Set
\begin{align*}
Q_i(Y)&=I_{t-1}(X_{pq}\in A[Y]\mid X_{pq}\in\mathfrak Q_i) + I_t(Y) \subseteq A[Y]\\
\q_i(Y)&=Q_i(Y)/I_t(Y) \subseteq A_t(Y)
\end{align*}
and note that $\q_i$ is a height-1 prime ideal of $A_t(Y)$ containing $f_i$;
this is verified as in~\cite{MR1413891}.
The ideal $\q_i'$ is defined similarly using columns associated to $S_i$: 
$$\mathfrak Q_i'=
\begin{cases}
\{X_{pq}\in Y\mid b_{i}\leq q\leq b_{i}+t-2\}&\text{if $i=h+1$ or $S_{i}'$ has type 1}\\
\{X_{pq}\in Y\mid \text{$b_{i}\leq q\leq b_{i}+t-2$ and $p\leq c_j$}\}& \text{if $i\leq h$ and $S_{i}'$ has type 2}
\end{cases}
$$
where, in the second case, $T_j'$ is the companion corner for $S_i'$. A slogan for this is 
$$\mathfrak Q_i'=
\begin{cases}
\{\text{columns of $Y$ involved in $F_i$}\}&\text{if $i=h+1$ or $S_{i}'$ has type 1}\\
\{\text{partial columns of $Y$ involved in $F_i$}\}&\text{if $i\leq h$ and $S_{i}'$ has type 2.}
\end{cases}
$$
Set 
\begin{align*}
Q_i'(Y)&=I_{t-1}(X_{pq}\in A[Y]\mid X_{pq}\in\mathfrak Q_i') + I_t(Y)\subseteq A[Y]\\
\q_i'(Y)&=Q'_i(Y)/I_t(Y)\subseteq A_t(Y)
\end{align*}
noting that $\q_i'$ is another height-1 prime ideal of $A_t(Y)$ containing $f_i$.
For each inside corner $T_j'$, set
\begin{align*}
\mathfrak P_j&=
\begin{cases}
\{X_{pq}\in Y\mid (p,q) \leq T'_j\} & \text{if $T'_j$ has type 1}\\
\varnothing & \text{if $T'_j$ has type 2}
\end{cases}\\
P_j(Y)&=I_{t-1}(X_{pq}\in A[Y]\mid X_{pq}\in\mathfrak P_j) + I_t(Y)\subseteq A[Y]\\
\p_j(Y)&=P_j(Y)/I_t(Y)\subseteq A_t(Y)
\end{align*}
and note that $\p_j$ is a height-1 prime in $A_t(Y)$ whenever $\p_j\neq (0)$.

The ideals $\q_i(Z),\q'_i(Z),\p_j(Z)$ are defined similarly, but we number these
ideals as we do in $Y$, even if $Z$ is not $(t-1)$-connected.
\end{notn}

\begin{disc}\label{disc190808a}
It is straightforward to build examples where $Z=Y\ssm B_1$ is $(t-1)$-disconnected, even though $Y$ is $t$-connected; in particular, this is illustrated below with $t = 3$ for the ladders $L_2$ and $L_4$ from Example \ref{cornertypes}, and the $Z$ highlighted by the boxes.
The ladders $L_1$ and $L_3$ are examples where $Z$ is $(t-1)$-connected.
In what follows, we wish to consider the ideals $\q_i,\p_j$ associated to the rings $\sfk_3(L_n)$. 

\begin{center}
    \begin{picture}(290,100)
      \put(38,80){\line(1,0){41}}
      \put(38,50){\line(0,1){30}}
      \put(-2,50){\line(1,0){40}}
      \put(-2,21){\line(0,1){28}}
      \put(-2,21){\line(1,0){60}}
      \put(58,21){\line(0,1){15}}
      \put(58,36){\line(1,0){21}}
      \put(79,36){\line(0,1){44}}
      \put(20,85){$X_{13}$}
      \put(40,85){$X_{14}$}
      \put(60,85){$X_{15}$}
      \put(20,70){$X_{23}$}
      \put(40,70){$X_{24}$}
      \put(60,70){$X_{25}$}    
      \put(-20,55){$X_{31}$}
      \put(0,55){$X_{32}$}
      \put(20,55){$X_{33}$}
      \put(40,55){$X_{34}$}
      \put(60,55){$X_{35}$}  
      \put(-20,40){$X_{41}$}
      \put(0,40){$X_{42}$}
      \put(20,40){$X_{43}$}
      \put(40,40){$X_{44}$}
      \put(60,40){$X_{45}$} 
      \put(-20,25){$X_{51}$} 
      \put(0,25){$X_{52}$}
      \put(20,25){$X_{53}$}
      \put(40,25){$X_{54}$}
      \put(3, 10){\text{ Ladder }$L_1$ }
      \put(238,80){\line(1,0){40}}
      \put(238,50){\line(0,1){30}}
      \put(198,50){\line(1,0){61}}
      \put(198,20){\line(0,1){30}}
      \put(198,20){\line(1,0){61}}
      \put(259,20){\line(0,1){30}}
      \put(259,50){\line(1,0){19}}
      \put(278,50){\line(0,1){30}}
      \put(220,85){$X_{13}$}
      \put(240,85){$X_{14}$}
      \put(260,85){$X_{15}$}
      \put(220,70){$X_{23}$}
      \put(240,70){$X_{24}$}
      \put(260,70){$X_{25}$}
      \put(180,55){$X_{31}$}
      \put(200,55){$X_{32}$}
      \put(220,55){$X_{33}$}
      \put(240,55){$X_{34}$}
      \put(260,55){$X_{35}$}
      \put(180,40){$X_{41}$}
      \put(200,40){$X_{42}$}
      \put(220,40){$X_{43}$}
      \put(240,40){$X_{44}$}
      \put(180,25){$X_{51}$}
      \put(200,25){$X_{52}$}
      \put(220,25){$X_{53}$}
      \put(240,25){$X_{54}$}
       \put(200, 10){\text{ Ladder }$L_2$}
 \end{picture}
  \end{center}

\begin{center}
    \begin{picture}(290,100)
      \put(38,80){\line(1,0){41}}
      \put(38,50){\line(0,1){30}}
      \put(-2,50){\line(1,0){40}}
      \put(-2,6){\line(0,1){44}}
      \put(-2,6){\line(1,0){60}}
      \put(58,6){\line(0,1){15}}
      \put(58,21){\line(1,0){21}}
      \put(79,21){\line(0,1){59}} 
      \put(20,85){$X_{13}$}
      \put(40,85){$X_{14}$}
      \put(60,85){$X_{15}$}
      \put(20,70){$X_{23}$}
      \put(40,70){$X_{24}$}
      \put(60,70){$X_{25}$}
      \put(-20,55){$X_{31}$}    
      \put(0,55){$X_{32}$}
      \put(20,55){$X_{33}$}
      \put(40,55){$X_{34}$}
      \put(60,55){$X_{35}$}  
      \put(-20,40){$X_{41}$}
      \put(0,40){$X_{42}$}
      \put(20,40){$X_{43}$}
      \put(40,40){$X_{44}$}
      \put(60,40){$X_{45}$}  
      \put(-20,25){$X_{51}$}
      \put(0,25){$X_{52}$}
      \put(20,25){$X_{53}$}
      \put(40,25){$X_{54}$}
      \put(60,25){$X_{55}$}
      \put(-20,10){$X_{61}$}
      \put(0,10){$X_{62}$}
      \put(20,10){$X_{63}$}
      \put(40,10){$X_{64}$}
       \put(3, -5){\text{ Ladder }$L_3$}
      \put(239,80){\line(1,0){39}}
      \put(239,50){\line(0,1){30}}
      \put(239,50){\line(1,0){39}}
      \put(198,50){\line(1,0){39}}
      \put(198,20){\line(0,1){30}}
      \put(198,20){\line(1,0){38}}
      \put(237,20){\line(0,1){30}}
      \put(278,50){\line(0,1){30}}
      \put(220,85){$X_{13}$}
      \put(240,85){$X_{14}$}
      \put(260,85){$X_{15}$}
      \put(220,70){$X_{23}$}
      \put(240,70){$X_{24}$}
      \put(260,70){$X_{25}$}
      \put(180,55){$X_{31}$}
      \put(200,55){$X_{32}$}
      \put(220,55){$X_{33}$}
      \put(240,55){$X_{34}$}
      \put(260,55){$X_{35}$}
      \put(180,40){$X_{41}$}
      \put(200,40){$X_{42}$}
      \put(220,40){$X_{43}$}
      \put(180,25){$X_{51}$}
      \put(200,25){$X_{52}$}
      \put(220,25){$X_{53}$}
       \put(200,0){\text{ Ladder }$L_4$}
 \end{picture}
  \end{center}
\medskip

Continuing with our running example \ref{cornertypes}, the $\q_i$'s are displayed in Table~\ref{tableq} below. The $\q_i'$'s are similar, using columns.
\begin{table}[h]
\begin{center}
\begin{tabular}{|ll|ll|}
\hline
$Y=L_1:$ & $\q_1=I_2\begin{pmatrix}x_{13}&x_{14}&x_{15}\\x_{23}&x_{24}&x_{25}\end{pmatrix}$ & $Z:$ & $\q_1=(x_{24},x_{25})$ \\
& $\q_2=I_2\begin{pmatrix}x_{31}&x_{32}&x_{33}&x_{34}&x_{35}\\x_{41}&x_{42}&x_{43}&x_{44}&x_{45}\end{pmatrix}$ & & $\q_2=(x_{42},x_{43},x_{44},x_{45})$ \\
\hline
$Y=L_2:$ & $\q_1=I_2\begin{pmatrix}x_{13}&x_{14}&x_{15}\\x_{23}&x_{24}&x_{25}\end{pmatrix}$ & $Z:$ & $\q_1=(x_{24},x_{25})$ \\
& $\q_2=I_2\begin{pmatrix}x_{31}&x_{32}&x_{33}&x_{34}\\x_{41}&x_{42}&x_{43}&x_{44}\end{pmatrix}$ & & $\q_2=(x_{42},x_{43},x_{44})$ \\
\hline
$Y=L_3:$ & $\q_1=I_2\begin{pmatrix}x_{13}&x_{14}&x_{15}\\x_{23}&x_{24}&x_{25}\end{pmatrix}$ & $Z:$ & $\q_1=(x_{24},x_{25})$ \\
& $\q_2=I_2\begin{pmatrix}x_{31}&x_{32}&x_{33}&x_{34}&x_{35}\\x_{41}&x_{42}&x_{43}&x_{44}&x_{45}\end{pmatrix}$ & & $\q_2=(x_{42},x_{43},x_{44},x_{45})$ \\
\hline
$Y=L_4:$ & $\q_1=I_2\begin{pmatrix}x_{13}&x_{14}&x_{15}\\x_{23}&x_{24}&x_{25}\end{pmatrix}$ & $Z:$ & $\q_1=(x_{24},x_{25})$ \\
& $\q_2=I_2\begin{pmatrix}x_{31}&x_{32}&x_{33}\\x_{41}&x_{42}&x_{43}\end{pmatrix}$ & & $\q_2=(x_{42},x_{43})$ \\
\hline
\end{tabular}
\end{center}
\caption{} \label{tableq}
\end{table}

The upper inside corners of $Z$ are all upper inside corners of $Y$, but not vice versa in general.
Thus, we analyze the cases here.

If $T_j'$ is an upper inside corner of $Y$ with type 1, then $T_j'$ is also an upper inside corner of $Z$ with type 1, 
and $T_j'$ does not cause a $(t-1)$-disconnection of $Z$. 
(See ladders $L_1$ and $L_3$ above.)
In this case, the definitions of $\q_i,\q'_i,\p_j$ coincide with those in \cite{MR1413891}.
In particular, if $Y$ satisfies Assumption~(d), then $Z$ is $(t-1)$-connected and satisfies Assumption~(d) with respect to $t-1$.

If $T_j'$  is an upper inside corner of $Y$ with type 2, then $T_j'$ may or may not be an upper inside corner of $Z$, and 
$T_j'$ may or may not cause a $(t-1)$-disconnection of $Z$. 
(See ladders $L_2$ and $L_4$ above.)
In this case, let $S_i'$ be the companion corner for $T_j'$.
If $\max\{c_j-a_i,d_j-b_i\}<t-2$, then $I_{t-1}(X_{pq} \in Y \mid (p,q) \leq T'_j\,)=(0)$, and similarly for $Z$ if $T'_j \in Z$.
If $T'_j \notin Z$, then $\p_j(Z)$ is, in a sense, meaningless.
(This is the case for ladder $L_4$ above.)
If $d_j-b_i=t-2$, then $I_{t-1}(X_{pq} \in Y \mid (p,q) \leq T'_j\,)+I_t(Y)=\q'_i$ (as in ladder $L_2$ above).
If $c_j-a_i=t-2$, then $I_{t-1}(X_{pq} \in Y \mid (p,q) \leq T'_j\,)+I_t(Y)=\q_{i+1}$.
In all cases, $T'_j$ does not contribute new ideals to $\Cl(A_t(Y))$ nor $\Cl(A_{t-1}(Z))$,
hence, we may as well define $\p_j(Y)=(0)$ and $\p_j(Z)=(0)$.

The table below shows the ideals $\p_j$, which is only $\p_1$, for the running example(s).

\begin{table}[h]
\begin{center}
\begin{tabular}{|ll|ll|}
\hline
$Y=L_1:$ & $\p_1=I_2\begin{pmatrix}&&x_{13}&x_{14}\\&&x_{23}&x_{24}\\x_{31}&x_{32}&x_{33}&x_{34}\\x_{41}&x_{42}&x_{43}&x_{44}\end{pmatrix}$ & $Z:$ & $\p_1=I_1\begin{pmatrix}&&x_{24}\\&&x_{34}\\x_{42}&x_{43}&x_{44}\end{pmatrix}$ \\
\hline
\rule{0pt}{12pt}$Y=L_2:$ & $\p_1=(0)$ & $Z:$ & $\p_1=(0)$\\
\hline
$Y=L_3:$ & $\p_1=I_2\begin{pmatrix}&&x_{13}&x_{14}\\&&x_{23}&x_{24}\\x_{31}&x_{32}&x_{33}&x_{34}\\x_{41}&x_{42}&x_{43}&x_{44}\\x_{51}&x_{52}&x_{53}&x_{54}\end{pmatrix}$ & $Z:$ & $\p_1=I_1\begin{pmatrix}&&x_{24}\\&&x_{34}\\x_{42}&x_{43}&x_{44}\\x_{52}&x_{53}&x_{54}\end{pmatrix}$ \\
\hline
\rule{0pt}{12pt}$Y=L_4:$ & $\p_1=(0)$ & $Z:$ & $\p_1=(0)$\\
\hline
\end{tabular}
\end{center}
\caption{} \label{tablep}
\end{table}
\end{disc}

\begin{cor} \label{cor:correspondence}
Continue with Assumption~\ref{ass190715a}.
 Then $\psi(I_t(Y)) = I_{t-1}(Z)$, $\psi(Q_i(Y))=Q_i(Z)$ and $\psi(P_j(Y))=P_j(Z)$ in $\sfk[Y]_{\mfx}$.
\end{cor}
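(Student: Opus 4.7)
The plan is to prove each of the three identities by showing double containments in $\sfk[Y]_{\mfx}$, leveraging Corollary \ref{cor:contains} together with the explicit formulas in Lemmas \ref{lem:outsidecorner}--\ref{lem:notoutsidecorner}. The containment $\psi(I_t(Y)) \supseteq I_{t-1}(Z)$ is already established in Corollary \ref{cor:contains}, so the core work is the reverse containment.

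For $\psi(I_t(Y)) \subseteq I_{t-1}(Z)$, I would take an arbitrary generator $|M| = [\mu_0, \ldots, \mu_{t-1} \mid \nu_0, \ldots, \nu_{t-1}]$ of $I_t(Y)$ and split into two cases. If the top-left position $(\mu_0, \nu_0)$ coincides with an outside lower corner $(a_{i_0-1}, b_{i_0})$ of $Y$, then Lemma \ref{lem:outsidecornergen} expresses $\psi(|M|)$ as $X_{S_{i_0}}$ times a $\sfk$-linear combination of $(t-1)$-minors whose row indices satisfy $a_{i_0-1} < \rho_i$ and column indices satisfy $b_{i_0} < \gs_j$; since $X_{S_{i_0}}$ is a unit in $\sfk[Y]_{\mfx}$ and each entry $X_{\rho_p, \gs_q}$ of such a minor has the original top row $a_{i_0-1}$ strictly above it in $Y$, these entries all lie in $Z$, placing $\psi(|M|)$ in $I_{t-1}(Z)$. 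Otherwise, Lemma \ref{lem:notoutsidecorner} expresses $\psi(|M|) = |M| + E$ as a sum of $t$-minors in $Y$; Laplace-expanding each such $t$-minor along its bottom row exhibits it as a $\sfk[Y]$-linear combination of $(t-1)$-minors whose entries all sit strictly above that bottom row, so they cannot lie on the thickness-$1$ lower border and the expansion lies in $I_{t-1}(Z)$.

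For $\psi(Q_i(Y)) = Q_i(Z)$ and $\psi(P_j(Y)) = P_j(Z)$, I would combine the equality $\psi(I_t(Y)) = I_{t-1}(Z)$ with an analogous analysis of $\psi$ on the additional $(t-1)$-minor generators coming from $\mathfrak{Q}_i(Y)$ and (for type-1 $T_j'$) $\mathfrak{P}_j(Y)$. A generating $(t-1)$-minor of $I_{t-1}(\mathfrak{Q}_i(Y))$ must use all $t-1$ rows of $\mathfrak{Q}_i(Y)$, in particular the outside-corner row $a_{i-1}$, so the same row-reduction argument that drives Lemmas \ref{lem:outsidecorner}--\ref{lem:outsidecornergen} produces a unit multiple of the corresponding $(t-2)$-minor in $\mathfrak{Q}_i(Z)$ plus lower-order terms, all sitting inside $I_{t-2}(\mathfrak{Q}_i(Z)) + I_{t-1}(Z) = Q_i(Z)$. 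The reverse containment is handled by applying $\chi$ exactly as in Corollary \ref{cor:contains}, and the argument for $P_j$ proceeds in the same spirit, restricting attention to the upper-left rectangular region cut out by a type-1 upper inside corner $T_j'$.

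The main obstacle is the careful bookkeeping in the middle paragraph above: confirming that every entry $X_{\rho_p, \gs_q}$ appearing in a $(t-1)$-minor produced by $\psi$ actually lies in $Z$ rather than $B_1$. This relies on tracking the strict row/column inequalities guaranteed by Lemmas \ref{lem:outsidecornergen} and \ref{lem:notoutsidecorner} ($\rho_i > a_{i_0-1}$ in the outside-corner case, and $\rho_p < \mu_{t-1}$ after a bottom-row expansion in the non-corner case) and translating them into the geometric statement that at least one row of $Y$ lies directly below each such entry, giving that entry thickness greater than $1$ and thereby keeping it out of $B_1$.
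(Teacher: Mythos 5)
Your overall skeleton matches the paper's: both obtain $\psi(I_t(Y))\supseteq I_{t-1}(Z)$ from Corollary~\ref{cor:contains} and prove the reverse containment generator by generator, splitting on whether $(\mu_0,\nu_0)$ is an outside lower corner and invoking Lemmas~\ref{lem:outsidecornergen} and~\ref{lem:notoutsidecorner}, with the $Q_i$ and $P_j$ statements handled by the analogous argument one size down. The outside-corner case is essentially correct, but note that your justification needs \emph{both} strict inequalities $\rho_p>a_{i_0-1}$ and $\gs_q>b_{i_0}$ (together with the fact that the rectangle spanned by $|M|$ lies in $Y$), not just the row condition: comparing with the ladders $Z$ drawn in Remark~\ref{disc190808a}, one sees that $X_{pq}\in Z$ exactly when $X_{p-1,q-1}\in Y$, which is not the condition ``some row of $Y$ lies directly below the entry'' that you invoke in your final paragraph.

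The genuine gap is in the non-corner case. Laplace expansion of a $t$-minor along its bottom row does not produce $(t-1)$-minors lying in $Z$: the surviving entries include the entire first row and first column of the minor, and these can lie in $B_1$. Concretely, for $Y=L_1$ from Example~\ref{cornertypes} and $t=3$, the minor $[2,3,4\mid 3,4,5]$ is contained in $Y$ and its top-left position $(2,3)$ is not an outside corner; expanding along row $4$ produces the $2$-minor $[2,3\mid 3,4]$, whose entries $X_{23}$ and $X_{33}$ both lie in $B_1$, so this minor does not lie in $Z$ and there is no reason for it to belong to $I_2(Z)$. (Expanding along the top row fares no better: $[3,4\mid 3,4]$ still contains $X_{33}\in B_1$.) The paper's proof avoids this by expanding $\psi(|M|)=|M|+E$ along the \emph{first column} when $X_{\mu_0,\nu_0+1}\notin B_1$, and along the \emph{first row} in the remaining sub-case, where $X_{\mu_0+1,\nu_0}\notin B_1$ holds automatically because $(\mu_0,\nu_0)$ is not an outside corner; in each sub-case the bounds $\mu_0\le\rho_i\le\mu_i$ and $\nu_0\le\gs_j\le\nu_j$ supplied by Lemma~\ref{lem:notoutsidecorner} are exactly what force the surviving entries to avoid $B_1$. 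Without this (or an equivalent) choice of expansion and sub-case analysis, the containment $\psi(I_t(Y))\subseteq I_{t-1}(Z)$ is not established.
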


\begin{proof}
  We first show that $\psi(I_t(Y)) = I_{t-1}(Z)$.
  The inclusion $\supseteq$ is given by Corollary~\ref{cor:contains}, hence it is only necessary to
  establish $\subseteq$.

  Let $\mu_0<\mu_1<\mu_2<\dots<\mu_{t-1}$ and $\nu_0<\nu_1<\nu_2<\dots<\nu_{t-1}$
  be such that $|M|:=[\mu_0,\mu_1,\mu_2,\dots,\mu_{t-1} \mid \nu_0,\nu_1,\nu_2,\dots,\nu_{t-1}]$
  is a generator of $I_t(Y)$. We need to show that the terms of $\psi(|M|)$ from   
  Lemma~\ref{lem:notoutsidecorner} are in $I_{t-1}(Z)$.  By Lemma~\ref{lem:outsidecornergen}, it is sufficient to only consider the case when $(\mu_0,\nu_0)$ is not an outside corner of $Y$.
  Consequently,
  $X_{\mu_0,\nu_0+1} \notin B_1$ or $X_{\mu_0+1,\nu_0} \notin B_1$.
  If $X_{\mu_0,\nu_0+1} \notin B_1$, then we expand all determinants along the
  first column to get $\psi(|M|) \in I_{t-1}(Z)$.
  Next, if $X_{\mu_0,\nu_0} \notin B_1$, then $X_{\mu_0,\nu_0+1} \notin B_1$, so we
  may assume that $X_{\mu_0+1,\nu_0} \notin B_1$, but $X_{\mu_0,\nu_0} \in B_1$.
  By Lemma~\ref{lem:notoutsidecorner}, we have $\gs_j \in \{ b_w \mid
  w \in U(\mu_0,\nu_j)\} \cup \{ \nu_j\}$, so $\nu_0 \leq \gs_j \leq \nu_j$.
  Then we expand all determinants in $\psi(|M|)$ along the first row to get
  $\psi(|M|) \in I_{t-1}(Z)$.

  The proofs of $\psi(Q_i(Y))=Q_i(Z)$ and $\psi( P_j(Y))=P_j(Z)$ 
  are similar
  (by considering $t-1$ instead of $t$).
\end{proof}

\begin{prop} \label{prop:invertx}
  The maps $\psi$ and $\chi$ from Definition~\ref{defn:psichi} induce isomorphisms
  \[
    \sfk_t(Y)_{x} \cong \sfk_{t-1}(Z)[B_1]_{\mfx}.
  \]
\end{prop}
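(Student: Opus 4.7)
The plan is to observe that essentially all of the work has been done in the preceding lemmas; the proposition will be a direct assembly.  First, I would note that by Lemma~\ref{lem190807a}, the maps $\psi$ and $\chi$ are mutually inverse ring endomorphisms of $\sfk[Y]_{\mfx}$, hence they are automorphisms of this ring.  In particular, $\psi$ carries ideals to ideals, and $\psi(\mathfrak{a}) = \mathfrak{b}$ if and only if $\chi(\mathfrak{b}) = \mathfrak{a}$.

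Next, Corollary~\ref{cor:correspondence} gives $\psi(I_t(Y)\sfk[Y]_{\mfx}) = I_{t-1}(Z)\sfk[Y]_{\mfx}$ as ideals of $\sfk[Y]_{\mfx}$.  Since $\psi$ is an automorphism, it therefore descends to a ring isomorphism on quotients
\[
\overline{\psi}\colon \sfk[Y]_{\mfx}/I_t(Y)\sfk[Y]_{\mfx} \xrightarrow{\ \cong\ } \sfk[Y]_{\mfx}/I_{t-1}(Z)\sfk[Y]_{\mfx},
\]
whose inverse is the corresponding map $\overline{\chi}$ induced by $\chi$.

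It remains only to identify these two quotient rings with $\sfk_t(Y)_x$ and $\sfk_{t-1}(Z)[B_1]_\mfx$, respectively.  For the source, the equality $\sfk[Y]_\mfx/I_t(Y)\sfk[Y]_\mfx \cong \sfk_t(Y)_x$ is immediate from the commutativity of localization with quotients, since $x$ is the residue of $\mfx$.  For the target, use the decomposition $Y = Z \sqcup B_1$ of variable sets to write $\sfk[Y] = \sfk[Z][B_1]$ as a polynomial ring; because $I_{t-1}(Z)$ is generated by polynomials in the $Z$-variables alone, one obtains
\[
\sfk[Y]/I_{t-1}(Z)\sfk[Y] \cong \bigl(\sfk[Z]/I_{t-1}(Z)\bigr)[B_1] = \sfk_{t-1}(Z)[B_1],
\]
and then localizing at $\mfx$ yields $\sfk_{t-1}(Z)[B_1]_\mfx$.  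Composing $\overline\psi$ with these identifications produces the desired isomorphism $\sfk_t(Y)_x \cong \sfk_{t-1}(Z)[B_1]_\mfx$.

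I do not foresee a serious obstacle here: the conceptual difficulty has already been absorbed into verifying that $\psi$ and $\chi$ are well-defined mutual inverses (Lemma~\ref{lem190807a}) and that they respect the relevant ideals of minors (the chain Lemmas~\ref{lem:outsidecorner}--\ref{lem:notoutsidecorner} culminating in Corollary~\ref{cor:correspondence}).  The only thing to watch is to keep the two appearances of $\mfx$ straight — as a product of variables in the polynomial ring $\sfk[Y]$ to be inverted, and as its image in $\sfk_{t-1}(Z)[B_1]$ — but these coincide because $\mfx$ is a product of variables rather than of minors, so it survives both quotient maps unchanged.
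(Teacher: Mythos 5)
Your proposal is correct and follows essentially the same route as the paper: both deduce the isomorphism by combining Lemma~\ref{lem190807a} (that $\psi$ and $\chi$ are mutually inverse automorphisms of $\sfk[Y]_{\mfx}$) with Corollary~\ref{cor:correspondence} (that $\psi(I_t(Y))=I_{t-1}(Z)$), so that $\psi$ descends to the quotients. Your extra care in identifying the two quotient rings with $\sfk_t(Y)_x$ and $\sfk_{t-1}(Z)[B_1]_{\mfx}$ is a detail the paper leaves implicit, and it is handled correctly.
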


\begin{proof}
  From Corollary~\ref{cor:correspondence} it follows that $\psi(I_t(Y)) =  I_{t-1}(Z)$ in $\sfk[Y]_{\mfx}$,
  so the isomorphisms $\psi$ and $\chi$ from Lemma~\ref{lem190807a} induce the given isomorphism.
\end{proof}


\begin{thebibliography}{10}

\bibitem{MR926272}
S.~S. Abhyankar, \emph{Enumerative combinatorics of {Y}oung tableaux},
  Monographs and Textbooks in Pure and Applied Mathematics, vol. 115, Marcel
  Dekker, Inc., New York, 1988. \MR{926272}

\bibitem{auslander:smt}
M.~Auslander and M.\ Bridger, \emph{Stable module theory}, Memoirs of the
  American Mathematical Society, No. 94, American Mathematical Society,
  Providence, R.I., 1969. \MR{42 \#4580}

\bibitem{bruns:dr}
W.\ Bruns and U.\ Vetter, \emph{Determinantal rings}, Lecture Notes in
  Mathematics, vol. 1327, Springer-Verlag, Berlin, 1988. \MR{953963
  (89i:13001)}

\bibitem{Co}
A.~Conca, \emph{Ladder determinantal rings}, J. Pure Appl. Algebra \textbf{98}
  (1995), no.~2, 119--134. \MR{1319965}

\bibitem{MR1413891}
\bysame, \emph{Gorenstein ladder determinantal rings}, J. London Math. Soc. (2)
  \textbf{54} (1996), no.~3, 453--474. \MR{1413891}

\bibitem{fossum:dcgkd}
R.\ Fossum, \emph{The divisor class group of a {K}rull domain},
  Springer-Verlag, New York, 1973. \MR{0382254 (52 \#3139)}

\bibitem{foxby:bcfm}
H.-B.\ Foxby, \emph{Bounded complexes of flat modules}, J. Pure Appl. Algebra
  \textbf{15} (1979), no.~2, 149--172. \MR{535182 (83c:13008)}

\bibitem{frankild:rbsc}
A.~J. Frankild, S.\ Sather-Wagstaff, and A.~Taylor, \emph{Relations between
  semidualizing complexes}, J. Commut. Algebra \textbf{1} (2009), no.~3,
  393--436. \MR{2524860}

\bibitem{hartshorne:lc}
R.~Hartshorne, \emph{Local cohomology}, A seminar given by A. Grothendieck,
  Harvard University, Fall, vol. 1961, Springer-Verlag, Berlin, 1967.
  \MR{0224620 (37 \#219)}

\bibitem{sather:divisor}
S.~Sather-Wagstaff, \emph{Semidualizing modules and the divisor class group},
  Illinois J. Math. \textbf{51} (2007), no.~1, 255--285. \MR{2346197}

\bibitem{SWSeSpG}
S.~Sather-Wagstaff, T.~Se, and S.~Spiroff, \emph{Generic constructions and
  semidualizing modules}, submitted (2019), \texttt{arXiv:1912.13099}.

\bibitem{SWSeSpP1}
\bysame, \emph{On semidualizing modules of ladder determinantal rings},
  Illinois J. Math. \textbf{63} (2019), no.~1, 165--191. \MR{3959871}

\bibitem{SWSeSpP2}
\bysame, \emph{Semidualizing modules of $2 \times 2$ ladder determinantal
  rings}, J. Algebra. \textbf{538} (2019), 232--252.

\end{thebibliography}
\providecommand{\bysame}{\leavevmode\hbox to3em{\hrulefill}\thinspace}
\providecommand{\MR}{\relax\ifhmode\unskip\space\fi MR }
\providecommand{\MRhref}[2]{%
  \href{http://www.ams.org/mathscinet-getitem?mr=#1}{#2}
}
\providecommand{\href}[2]{#2}

\end{document}